\theoremstyle{definition} 
\newtheorem{theo}{Theorem}[section] 
\newtheorem{lem}[theo]{Lemma} 
\newtheorem{cor}[theo]{Corollary} 
\newtheorem{prop}[theo]{Proposition} 
\newtheorem{defi}[theo]{Definition} 
\newtheorem{rem}[theo]{Remark} 
\newtheorem{example}[theo]{Example} 
\numberwithin{equation}{section}
\newcommand{\delim}[3]{\left#1 #3 \right#2}  
\newcommand{\pin}[1]{\delim{\langle}{\rangle}{#1}} 
\def \tn {\textnormal}
\begin{document}
\renewcommand{\arraystretch}{1.5}

\title[Convergence of attractors]{The effect of perturbations on the convergence of attractors for reaction-diffusion equations concerning variations of nonlinear boundary conditions}
\author{Flank D. M. Bezerra}
\address{Federal University of Para\'{\i}ba, 58051-900, Jo\~{a}o Pessoa PB, Brazil.} 
\email{flank@mat.ufpb.br}

\author{Marcone C. Pereira}
\address{Instituto de Matem\'atica e Estat\'istica da Universidade de S\~ao Paulo, 05508-090, S\~ao Paulo SP, Brazil.}
\email{marcone@ime.usp.br}

\author{Leonardo Pires}
\address {State University of Ponta Grossa, 84030-900, Ponta Grossa PR, Brazil.}
\email{lpires@uepg.br}

\subjclass[2020]{34D45; 35B40; 41A25; 37L05; 37B35}

\keywords{global attractors; rate of convergence of attractors; reaction-diffusion equations; nonlinear boundary conditions}

\maketitle

\begin{abstract}
This paper presents estimates of the convergence of asymptotic dynamics of reaction-diffusion equations with nonlinear boundary conditions. We show how the convergence of the global attractors can be affected by the variations of diffusion coefficients, boundary conditions, and vector fields.
\end{abstract}

\tableofcontents

\section{Introduction}

The continuity of attractors for small perturbations of semilinear parabolic equations is a fundamental property to understand the qualitative properties present in physical models described by reaction-diffusion equations. Under small changes in the initial data and parameters, a precise model of differential equations must have the permanence and continuity of bounded solutions defined for all time. In addition, the asymptotic behavior of the solutions at infinite must be understood. These topics have been addressed in several works where conditions have been presented to guarantee the existence and continuity of global attractors for semigroups generated by solutions of parabolic equations,  see e.g. \cite{Babin1992,A.N.Carvalho2010,haleetall} and \cite{Hale1988}. The recent works \cite{Arrieta,Santamaria2014,Santamaria2017,Santamaria2018,Carvalho2010,Lpires1,Carvalho,PPires2024,Pires2021,PR2023} and \cite{Pires2023} have been obtained estimates on the continuity of the dynamics of a one-parameter equation when it converges to a limiting equation. In this context, important convergences, such as the permanence of equilibria, local invariant manifolds, and, global attractors have been estimated by a function involving the parameter that appears in the perturbed equation. Thus, it has shown that the stability of well behavior perturbations can be measured, in such a way, that the continuity of the problem produces rates of convergence, as fast as the parameters of the perturbed equation converge to the parameter of the limiting unperturbed equation. It is important to observe that the above recent papers have just considered one parameter, and except \cite{PPires2024}, they have worked without nonlinear boundary conditions and potentials. 
 
In this paper, we show how these rates of convergence can be affected by the perturbations of diffusion coefficients, boundary conditions, and vector fields. Therefore,  we extend the existing results by showing that if we add more parameters to the equation, the rate of convergence will change in a way that continuity will be preserved. We show explicitly how these new terms added in the equation contribute to the speed of convergence of attractors. In particular, if the problem is one-dimensional, then we significantly improve the rates of convergence obtained in \cite{Arrieta}, using the fact that a scalar reaction-diffusion equation generates a Morse-Smale semigroup in the phase space.

We consider the following family of reaction-diffusion equations with nonlinear boundary conditions and potentials, 
\begin{equation}\label{eq_reaction_diffusion}
\begin{cases}
\partial_tu^\varepsilon-\mbox{div}(p_\varepsilon(x)\nabla u^\varepsilon)+(\lambda_1+V_\varepsilon(x))u^\varepsilon=f^\varepsilon(u^\varepsilon)& \tn{in}\ \Omega\times(0,\infty), \\
\dfrac{\partial u^\varepsilon}{\partial \vec{n}_{p_\varepsilon}}+(\lambda_2+b_\varepsilon(x))u^\varepsilon=g^\varepsilon(u^\varepsilon)& \tn{on}\ \Gamma\times[0,\infty),
\end{cases}
\end{equation}
where $0\leqslant\varepsilon\leqslant\varepsilon_0\leqslant 1$, $\Omega\subset\mathbb{R}^N$ is a bounded Lipschitz domain with $\Gamma = \partial \Omega$, the boundary of $\Omega$, and $\vec{n}$ is the outward unitary normal vector for $\Gamma$. Here, $\frac{\partial u}{\partial \vec{n}_{p_\varepsilon}}=p_\varepsilon(x) \nabla u \cdot \vec{n}$ denotes the conormal derivative and we choose $\lambda_1,\lambda_2 \in\mathbb{R}$ sufficiently large such that 
\begin{equation} \label{condlam}
\tn{essinf}_{x\in\Omega}\{\lambda_1+V_\varepsilon\}\geqslant m_0
\quad \textrm{ and } \quad 
\tn{essinf}_{x\in\Gamma}\{\lambda_2+b_\varepsilon\}\geqslant m_0
\end{equation} 
for some $m_0>0$. The nonlinear terms $f^\varepsilon$ and $g^\varepsilon$ are $\mathcal{C}^2$ functions, uniformly bounded with bounded derivatives. The potentials $V_\varepsilon(x)$ and $b_\varepsilon(x)$ are given functions on $\Omega$ and $\Gamma$, respectively. The diffusion coefficients $p_\varepsilon$ are $C^1$ strictly positive functions on $\Omega$, such that, for $\varepsilon\in [0,\varepsilon_0]$,
\begin{equation}\label{conv_p}
0<m_0\leqslant p_\varepsilon(x)\quad\tn{ and }\quad \|p_\varepsilon-p_0\|_{L^\infty(\Omega)}\to 0\tn{ as } \varepsilon\to 0^+.
 \end{equation}

The potential terms verify 
\begin{equation}\label{conv_v}
\|V_\varepsilon-V_0\|_{L^{p}(\Omega)}\leqslant \eta(\varepsilon)  \quad\tn{and}\quad \|b_\varepsilon-b_0\|_{L^{q}(\Gamma)}\leqslant \tau(\varepsilon), 
\end{equation}
where $p=\frac{1}{2}-\frac{1}{r_0}$ with $r_0\in (2,r_0^\ast]$ where $r_0^\ast$ is the critical exponent for the inclusion $H^1(\Omega)\subset L^{r_0}(\Omega)$ and  $q=\frac{1}{2}-\frac{1}{r_1}$ with $r_1\in (2,r_1^\ast]$ where $r_1^\ast$ is the critical exponent for the inclusion $H^\frac{1}{2}(\Gamma)\subset L^{r_1}(\Gamma)$ and, $\eta,\tau:[0,\varepsilon_0]\to [0,\infty)$ are functions such that $\eta(\varepsilon),\tau(\varepsilon)\to 0$ as $\varepsilon\to 0^+$. 
  
We denote $\tilde{f}^\varepsilon:H^1(\Omega)\to L^2(\Omega)$ and $\tilde{g}^\varepsilon:H^{\frac{1}{2}}(\Gamma)\to L^2(\Gamma)$  the Nemitsk\u{\i}i operators associated with the functions $f^\varepsilon$ and $g^\varepsilon$, respectively. It is well-known that $\tilde{f}^\varepsilon$ and $\tilde{g}^\varepsilon$ are continuously differentiable. We assume the following convergences
\begin{equation}\label{conv_f}
\sup_{u\in D_1}\{\|\tilde{f}^\varepsilon(u)-\tilde{f}^0(u)\|_{L^2(\Omega)},\|(\tilde{f}^\varepsilon)^\prime(u)-(\tilde{f}^0)^\prime(u)\|_{\mathcal{L}(H^1(\Omega),L^2(\Omega))}\}\leqslant\kappa(\varepsilon) 
\end{equation}
and
\begin{equation}\label{conv_g}
\sup_{u\in D_2}\{\|\tilde{g}^\varepsilon(u)-\tilde{g}^0(u)\|_{L^2(\Gamma)},\|(\tilde{g}^\varepsilon)^\prime(u)-(\tilde{g}^0)^\prime(u)\|_{\mathcal{L}(H^{\frac{1}{2}}(\Gamma),L^2(\Gamma))}\} \leqslant \xi(\varepsilon),
\end{equation}
where $D_1\subset H^1(\Omega)$, $D_2\subset H^1(\Gamma)$  are bounded set, $\kappa,\xi:[0,\varepsilon_0]\to [0,\infty)$ are functions such that  $\kappa(\varepsilon),\xi(\varepsilon)\to 0^+$ as $\varepsilon\to 0^+$. 

Throughout the text we will denote the function $f^\varepsilon$ and its  Nemitsk\u{\i}i operators associated $\tilde{f}^\varepsilon$ by the same notation $f^\varepsilon$. The same for the function $g^\varepsilon$.

In order to obtain the well-posedness of \eqref{eq_reaction_diffusion} we assume in addition to \eqref{conv_f} and \eqref{conv_g} some growth and dissipative conditions in $f^\varepsilon$ and $g^\varepsilon$ state latter (see \eqref{diss_cond} and \eqref{grwoth_cond}).

We are concerned with the continuity of the dynamics of  \eqref{eq_reaction_diffusion} as $\varepsilon\to 0^+$. Notice that the assumptions \eqref{conv_p}-\eqref{conv_g} makes the equation \eqref{eq_reaction_diffusion} a regular perturbation of a limiting problem associated with the limiting parameter $\varepsilon=0$ in such a way that, the convergence of potentials and fields are measured by the parameter functions $\tau(\varepsilon),\kappa(\varepsilon)$ and $\xi(\varepsilon)$. 

In order to guess the limiting problem as $\varepsilon\to 0$, we informally proceed as follows. The first equation in \eqref{eq_reaction_diffusion} reads     
\[
\partial_tu^\varepsilon-\mbox{div}(p_\varepsilon(x)\nabla u^\varepsilon)+(\lambda_1+V_\varepsilon(x))u^\varepsilon=f^\varepsilon(u^\varepsilon)\ \mbox{in}\ \Omega\times(0,\infty),
\] 
and from the convergence properties \eqref{conv_p}, \eqref{conv_v} and \eqref{conv_f}, we  have the limiting equation, as $\varepsilon\to 0^+$,
\[
\partial_tu^0-\mbox{div}(p_0(x)\nabla u^0)+(\lambda_1+V_0(x))u^0=f^0(u^0)\ \mbox{in}\ \Omega\times(0,\infty).
\]
In the same way, we have 
\[
\frac{\partial u^\varepsilon}{\partial \vec{n}_{p_\varepsilon}}+(\lambda_2+ b_\varepsilon(x))u^\varepsilon=p_\varepsilon(x) \nabla u^\varepsilon \cdot \vec{n} +(\lambda_2+b_\varepsilon(x))u^\varepsilon=g^\varepsilon(u^\varepsilon)\ \mbox{on}\ \Gamma\times[0,\infty),
\]
and from the convergence properties \eqref{conv_p}, \eqref{conv_v} and \eqref{conv_f} we have the limiting boundary condition,  as $\varepsilon\to 0^+$,
\[
\frac{\partial u^0}{\partial \vec{n}_{p_0}}+(\lambda_2+b_0(x))u^0=p_0(x) \nabla u^0 \cdot \vec{n}_0 +(\lambda_2+b_0(x))u^0=g^0(u^0)\ \mbox{on}\ \Gamma\times[0,\infty).
\] 

In the next sections, we will show that the nonlinear semigroup set by \eqref{eq_reaction_diffusion} is right continuous at $\varepsilon=0$.  We will prove for \eqref{eq_reaction_diffusion} the convergence of equilibrium points, local invariant unstable manifolds and global attractors, as $\varepsilon\to 0^+$. The main result of this paper is to prove that all these convergences can be estimated by 
\begin{equation}\label{final_est_1}
\Big[\|p_\varepsilon-p_0\|_{L^\infty(\Omega)}+\eta(\varepsilon)+\tau(\varepsilon)+\kappa(\varepsilon)+\xi(\varepsilon)\Big]^{l},
\end{equation}
as $N\geqslant 2$, for some $0<l<1$. On the other side, if $N=1$, we can show that the convergences can be estimated by
\begin{equation}\label{final_est_2}
\Phi(\epsilon)  \log \Phi(\epsilon) 
\end{equation}
where
\begin{equation}
\Phi(\epsilon) = \|p_\varepsilon-p_0\|_{L^\infty(\Omega)}+\eta(\varepsilon)+\tau(\varepsilon)+\kappa(\varepsilon)+\xi(\varepsilon).
\end{equation}
Notice that \eqref{final_est_2} goes to zero faster than \eqref{final_est_1} as $\varepsilon\to 0^+$ and both are composed by the convergence of the potentials, boundary terms, and vector fields. 

The terms $l$ and $\log$ in estimates above in fact represent a loss in the rates of convergence. A result without these terms is still an open problem which can be called an optimal resolvent rate problem. We already know that in the situation of a family of parabolic problems whose asymptotic behavior is dictated by a system of ordinary differential equations, it is possible to obtain the optimal resolvent rate, see \cite{Lpires1}. In addition, constructing explicit estimates that give rise to the rate of convergence of attractors can be hard-working as we can see in \cite{Carvalho}, where an elliptic problem involving a divergente operator with localized large diffusion has been studied. It is interesting to observe that, for a class of problems that generate gradient semigroups, the global attractor is given by a union of unstable manifolds of hyperbolic equilibrium points. Also, the unstable manifold attracts exponentially. Thus, we hope that the rate of convergence of attractors will be exponential. It can be called an optimal exponential rate problem. Both optimal resolvent and exponential rates, as far as we know, are open problems.

To obtain the rate of convergence \eqref{final_est_1} and \eqref{final_est_2} for equation \eqref{eq_reaction_diffusion} we organize this paper as follows. In  Section \ref{Sec2}, we introduce the appropriate functional setting to deal with problem  \eqref{eq_reaction_diffusion} and we state the results on the well-posedness, regularity, and existence of global attractors.  In Section \ref{Sec3}, we show the convergence with a rate of the equilibrium points. Section \ref{Sec5}, is reserved to obtain the convergence of linear and nonlinear semigroups and to obtain one part of the rate of convergence. In Section \ref{Sec6}, we study the local unstable manifold around an equilibrium point through local linearization. In  Section \ref{Sec7}, we present our main result on the rate of convergence of attractors. In Section \ref{Sec8}, we deal with the special one-dimensional case.  We conclude with two appendices: \ref{SecA} is related to the general theory of the rate of convergence of attractors and  \ref{SecB} deals with the Shadowing Theory and its applications in the convergence of attractors. 

\section{Functional setting}\label{Sec2}

In this section, we summarize some already-known results on the existence of global attractors, their uniform bounds, and convergence under diffusion perturbation. We introduce the functional framework to study \eqref{eq_reaction_diffusion} and, we state a fundamental result on the equivalence between the norms of the fractional powers space and the Sobolev space $H^1(\Omega)$. We end the section obtaining the convergence of the nonlinearities taking values in a negative power Sobolev space.

The natural form to study  \eqref{eq_reaction_diffusion} is as an evolution equation in the Sobolev space $H^s(\Omega)$, $s>0$ whose dual space we denote by $H^{-s}(\Omega)$. 
We recall that $H^s(\Omega)$, $s>0$, can be defined as the fractional power space through the Laplace operator with homogeneous Newmann boundary conditions (see for instance \cite{Henry1980, Yagi}). In fact, we have from \cite[Theorem 1.35 and Corollary 2.4]{Yagi} that  $H^s(\Omega) = D((-\Delta + 1)^{s/2})$, $0 \leq s \leq 1$ where $\Delta$ is the Laplacian operator with Neumann homogeneous boundary condition $\Delta:D(\Delta)\subset L^2(\Omega)\to L^2(\Omega)$ with 
$$
D(\Delta)= \left\{ u\in H^2(\Omega): \frac{\partial u}{\partial\vec{n}}=0\hbox{ on } \partial \Omega \right\}.
$$

The duality between these spaces will be denoted by $\pin{\cdot,\cdot}_{-s,s}$. In particular, the scalar product in $L^2(\Omega)$ will be denoted by $\pin{\cdot,\cdot}$. 
For the boundary terms we consider the trace space $H^s(\Gamma)$ and the trace operator  $\gamma_{tr}:H^s(\Omega)\to H^{s-\frac{1}{2}}(\Gamma)$, for $s>\frac{1}{2}$. 


We define the unbounded linear operator $A_\varepsilon:D(A_\varepsilon)\subset L^2(\Omega)\to L^2(\Omega)$ by 
\begin{equation}\label{ell_operator}
A_\varepsilon u=-\mbox{div}(p_\varepsilon(x)\nabla u)+(\lambda+V_\varepsilon)u
\end{equation}
with domain 
\begin{equation}\label{ell_operator2}
D(A_\varepsilon)=\Big\{u\in H^2(\Omega):\frac{\partial u}{\partial n_{p_\varepsilon}}=0\ \mbox{on}\ \Gamma \Big\}.
\end{equation}

The operator $A_\varepsilon$ is selfadjoint and has compact resolvent in $L^2(\Omega)$. Moreover, if $\mu>\lambda^\varepsilon_1$, the first eigenvalue of $A_\varepsilon$, then the operator $\mu I+A_\varepsilon$ is positive. Indeed, if we denote by $X^\beta_\varepsilon$ its fractional power space, we have again from \cite[Theorem 1.35 and Corollary 2.4]{Yagi} that $X_\varepsilon^\beta = H^{2\beta}(\Omega)$, for $0\leqslant \beta\leqslant{1}/{2}$. By duality $H^{-2\beta}(\Omega) = X_\varepsilon^{-\beta}$, also for $0< \beta\leqslant {1}/{2}$. In particular, $X_\varepsilon^1=D(A_\varepsilon)$, $X_\varepsilon^{{1}/{2}}=H^1(\Omega)$, $X_\varepsilon^0=L^2(\Omega)$,  $X_\varepsilon^{-{1}/{2}}=H^{-1}(\Omega)$. 

Notice that our choice of $\lambda_1$ in \eqref{eq_reaction_diffusion} is not restrictive, since we can translate the operator $A_\varepsilon$ for a parameter $\mu$ so that $\mu+A_\varepsilon$ is positive and then the fractional power space is defined. Thus, we fix $\lambda_1$ satisfying \eqref{condlam} just to write the inner product in a clear form to study the variational formulation of the elliptic divergent operator.
In fact, the operator $A_\varepsilon$ is the realization in $L^2(\Omega)$ of the canonical isomorphism between $H^1(\Omega)$ and its dual  $H^{-1}(\Omega)$, defined by the bilinear form $a_\varepsilon :H^1(\Omega)\times H^1(\Omega)\to \mathbb{R}$ given by
\begin{equation}\label{bilinear_form}
a_\varepsilon(u,v) = \int_\Omega p_\varepsilon(x)\nabla u\nabla v\,dx+\int_\Omega (\lambda_1+V_\varepsilon)uv\,dx.  
\end{equation}

We have $\pin{A_\varepsilon u,v}_{-1,1} = a_\varepsilon(u,v)$ with $a_\varepsilon$ being continuous, symmetric and uniformly coercive by the following lemma. 
\begin{lem}\label{equivalence_norm}
The inner product in $X_\varepsilon^\frac{1}{2}$ given by
$$
\begin{gathered}
\pin{u,v}_{X_\varepsilon^\frac{1}{2}}=\int_\Omega p_\varepsilon(x)\nabla u\nabla v\,dx+\int_\Omega (\lambda_1+V_\varepsilon)uv\,dx+\int_\Gamma(\lambda_2+ b_\varepsilon(x)) uv\,d\sigma \\
= \pin{A_\varepsilon u,v}_{-1,1} +\int_\Gamma(\lambda_2+ b_\varepsilon(x)) uv\,d\sigma
\end{gathered}
$$
gives a norm in $H^{1}(\Omega)$, equivalent to the usual one for all $\varepsilon \in [0, \varepsilon_0]$ and some $0<\varepsilon_0\leqslant 1$.
\end{lem}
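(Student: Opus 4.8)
I want to show that the bilinear form
\[
\pin{u,v}_{X_\varepsilon^{1/2}}=\int_\Omega p_\varepsilon(x)\nabla u\nabla v\,dx+\int_\Omega (\lambda+V_\varepsilon)uv\,dx+\int_\Gamma(\lambda+ b_\varepsilon(x)) uv\,d\sigma
\]
defines an inner product on $H^1(\Omega)$ whose induced norm is equivalent to $\|\cdot\|_{H^1(\Omega)}$. Bilinearity and symmetry are immediate from the definition; the content is the two-sided bound. For the upper bound, I would estimate each of the three integrals directly: $\int_\Omega p_\varepsilon|\nabla u|^2\le \|p_\varepsilon\|_{L^\infty(\Omega)}\|\nabla u\|_{L^2(\Omega)}^2$, $\int_\Omega(\lambda+V_\varepsilon)u^2\le \|\lambda+V_\varepsilon\|_{L^\infty(\Omega)}\|u\|_{L^2(\Omega)}^2$, and for the boundary term I would invoke the continuity of the trace operator $\gamma_{tr}:H^1(\Omega)\to H^{1/2}(\Gamma)\hookrightarrow L^2(\Gamma)$ together with $\|\lambda+b_\varepsilon\|_{L^\infty(\Gamma)}$, so that $\int_\Gamma(\lambda+b_\varepsilon)u^2\le C\|\lambda+b_\varepsilon\|_{L^\infty(\Gamma)}\|u\|_{H^1(\Omega)}^2$. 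Summing gives $\|u\|_{X_\varepsilon^{1/2}}^2\le C\|u\|_{H^1(\Omega)}^2$.

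For the lower bound I would use the hypotheses $\mathrm{essinf}_{x\in\Omega}(\lambda+V_\varepsilon)\ge m_0$, $\mathrm{essinf}_{x\in\Gamma}(\lambda+b_\varepsilon)\ge m_0$, and $p_\varepsilon(x)\ge m_0$ from \eqref{conv_p}. These give at once
\[
\pin{u,u}_{X_\varepsilon^{1/2}}\ge m_0\int_\Omega |\nabla u|^2\,dx+m_0\int_\Omega u^2\,dx\ge m_0\|u\|_{H^1(\Omega)}^2,
\]
where the boundary integral was simply discarded (it is nonnegative). In particular $\pin{u,u}_{X_\varepsilon^{1/2}}=0$ forces $u=0$, so we indeed have an inner product, and combining the two bounds yields norm equivalence on $H^1(\Omega)$.

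Finally I should reconcile this with the identification $X_\varepsilon^{1/2}=H^1(\Omega)$ already recorded in the text: since $A_\varepsilon$ is selfadjoint and (after the translation making it positive) has the standard fractional-power scale with $X_\varepsilon^0=L^2(\Omega)$ and $X_\varepsilon^{1/2}=D(A_\varepsilon^{1/2})=H^1(\Omega)$, the intrinsic $X_\varepsilon^{1/2}$ inner product is $\pin{A_\varepsilon^{1/2}u,A_\varepsilon^{1/2}v}=\pin{A_\varepsilon u,v}_{-1,1}$, and the variational identity \eqref{bilinear_form} shows this equals the displayed three-integral expression. So the stated form is precisely the fractional-power inner product, and the equivalence with the usual $H^1$ norm is what we just proved.

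**Main obstacle.** There is no serious obstacle; the only point requiring a little care is the boundary term in the upper bound, where one must not try to bound $\|u\|_{L^2(\Gamma)}$ by $\|u\|_{L^2(\Omega)}$ (impossible) but rather go through the trace inequality $\|\gamma_{tr}u\|_{L^2(\Gamma)}\le C\|u\|_{H^1(\Omega)}$, whose constant $C$ depends only on $\Omega$. One should also remark that all constants can be taken uniform in $\varepsilon\in[0,\varepsilon_0]$ thanks to \eqref{conv_p} and \eqref{conv_v}, which is what makes this lemma useful later.
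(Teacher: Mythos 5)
Your proof is correct and follows essentially the same route as the paper: both sides of the equivalence come from the pointwise bounds $m_0\le p_\varepsilon,\ \lambda+V_\varepsilon,\ \lambda+b_\varepsilon\le M_0$. You are in fact slightly more careful than the paper's terse one-line argument, since you explicitly note that the boundary term in the upper bound requires the trace inequality $\|\gamma_{tr}u\|_{L^2(\Gamma)}\le C\|u\|_{H^1(\Omega)}$ (whose constant must also be absorbed into the final equivalence constant), a point the paper leaves implicit.
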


\begin{proof}
We have $\tn{essinf}_{x\in\Omega}\{\lambda_1+V_\varepsilon\},\tn{essinf}_{x\in\Gamma}\{\lambda_2+b_\varepsilon\}\geqslant m_0$ and  by \eqref{conv_p} it follows that
$$
m_0\|u\|_{H^1(\Omega)}\leqslant \|u\|_{X_\varepsilon^\frac{1}{2}} \leqslant M_0 \|u\|_{H^1(\Omega)},
$$
where $\sup_{0\leqslant\varepsilon\leqslant\varepsilon_0}\{\|p_\varepsilon\|_{L^\infty(\Omega)},\|\lambda_1+V_\varepsilon\|_{L^\infty(\Omega)},\|\lambda_2+b_\varepsilon\|_{L^\infty(\Gamma)}\}\leqslant M_0$.
\end{proof}

Now, if $u$ is a solution of \eqref{eq_reaction_diffusion}, multiplying  \eqref{eq_reaction_diffusion} by a  test function $\varphi$ and integrating by parts over $\Omega$, we obtain
$$
\begin{gathered}
\int_\Omega \partial_tu^\varepsilon\varphi\,dx+\int_\Omega p_\varepsilon(x)\nabla u^\varepsilon\nabla\varphi\,dx+\int_\Omega (\lambda_1+V_\varepsilon)u^\varepsilon \varphi\,dx \\
=\int_\Omega f^\varepsilon (u^\varepsilon)\varphi\,dx+\int_\Gamma \Big[ g^\varepsilon (\gamma_{tr}(u^\varepsilon)) - ( \lambda_2 + b_\varepsilon(x) ) \gamma_{tr}(u^\varepsilon) \Big] \gamma_{tr}(\varphi)\,d\sigma.
\end{gathered}
$$
Hence, to obtain appropriated semilinear formulation for \eqref{eq_reaction_diffusion}, we consider nonlinear maps $h^\varepsilon: H^1(\Omega) \mapsto H^{-\beta}(\Omega)$, $\beta>1/2$, given by 
\begin{equation}\label{defi_h}
\pin{h^\varepsilon (u),\varphi}=\int_\Omega f^\varepsilon (u)\varphi\,dx+\int_\Gamma \Big[ g^\varepsilon (\gamma_{tr}(u)) - ( \lambda_2 + b_\varepsilon(x) ) \gamma_{tr}(u) \Big] \gamma_{tr}(\varphi)\,d\sigma.
\end{equation}  

It is well-known that if  $f^\varepsilon$ and $g^\varepsilon$ are $\mathcal{C}^2$ bounded functions with derivatives up to second order bounded and if $\frac{1}{2} < \beta<1$, then $h^\varepsilon$ is a well defined Nemitsk\u{\i}i function which is Fr\'echet continuously differentiable  (see for instance \cite{Pereira2007}).
Thus, from now on, we can take $A_\varepsilon: X^{1-\beta/2}_\varepsilon = H^{2-\beta}(\Omega) \mapsto X^{-\beta/2}_\varepsilon = H^{-\beta}(\Omega)$, for some fixed $\beta \in (1/2,1)$, rewriting \eqref{eq_reaction_diffusion} in the following abstract form
\begin{equation}\label{semilinear_problem}
\begin{cases}
\dfrac{du^\varepsilon}{dt}+A_\varepsilon u^\varepsilon=h^\varepsilon(u^\varepsilon),\\
u^\varepsilon(0)=u^\varepsilon_0 \in H^1(\Omega),\quad \varepsilon\in [0,\varepsilon_0].
\end{cases}
\end{equation}
Notice that our base space here is $H^{-\beta}(\Omega)$ and we have just formulated the problem in the phase space $H^1(\Omega)$. 

We assume that $f^\varepsilon,g^\varepsilon$ satisfy the standard growth, sign, and dissipative conditions uniformly in $\varepsilon$ according to \cite{J.M.Arrieta2000}. 
First, we suppose there exist $B_0$, $C_0 \in \mathbb{R}$ and $B_1$, $C_1>0$ such that for all $u \in \mathbb{R}$ and $\varepsilon \geq 0$ the following holds  
\begin{equation}\label{diss_cond}
\begin{gathered}
u f^\varepsilon(u) \leq - C_0 u^2 + C_1 |u| \quad \textrm{ and } \quad 
u g^\varepsilon(u) \leq - B_0 u^2 + B_1 |u|.
\end{gathered}
\end{equation}
Next, we assume the first eigenvalue of the following eigenvalue problem is positive
\begin{equation}\label{grwoth_cond}
\begin{gathered}
-\mbox{div}(p_0(x)\nabla u)+(\lambda_1+V_0(x) + C_0) u = \mu u \quad \tn{in}\ \Omega \\
\dfrac{\partial u}{\partial \vec{n}_{p_0}}+(\lambda_2+b_0(x) + B_0)u = 0 \quad \tn{on}\ \Gamma.
\end{gathered}
\end{equation}

Consequently, for each $\varepsilon\in [0,\varepsilon_0]$, we get from \cite{J.M.Arrieta2000} that \eqref{semilinear_problem} is globally well-posed and its solutions are classical and continuously differentiable concerning the initial data. Therefore we are able to consider in $H^1(\Omega)$ the family of nonlinear semigroups $\{T_\varepsilon(t);\ t\geqslant 0\}_{\varepsilon\in [0,\varepsilon_0]}$ defined by $T_\varepsilon(t)=u^\varepsilon(t,u^\varepsilon_0)$, $t\geqslant 0$, where $u^\varepsilon(t,u^\varepsilon_0)$ is the solution of \eqref{semilinear_problem} through $u^\varepsilon_0\in H^1(\Omega)$ satisfying the variation of constants formula,
\begin{equation}\label{semigroupononlinear}
T_\varepsilon(t)u_0^\varepsilon   = e^{-A_{\varepsilon}t}u_0^\varepsilon+\int_0^{t} e^{-A_\varepsilon (t-s)}h^\varepsilon(T_\varepsilon(s))\,ds,\quad t\geqslant 0.
\end{equation}
Notice that $e^{-A_{\varepsilon}(\cdot)}$ is the linear semigroup generated by the operator $-A_\varepsilon$. The existence of attractor and uniform bounds for semigroups $\{T_\varepsilon(t);\ t\geqslant 0\}$ associated with \eqref{semilinear_problem} are also established in \cite{J.M.Arrieta2000}  where it was proved the following result.

\begin{theo}\label{uniform_bounds}
The nonlinear semigroup $\{T_\varepsilon(t);\ t\geqslant 0\}$ associated with \eqref{semilinear_problem} has global attractor $\mathcal{A}_\varepsilon$ in $H^1(\Omega)$. Furthermore
\begin{equation*}
\sup_{\varepsilon\in[0,\varepsilon_0]}\sup_{w\in\mathcal{A}_\varepsilon}\|w\|_{H^1(\Omega)}<\infty\quad \hbox{ and }\quad \sup_{\varepsilon\in[0,\varepsilon_0]}\sup_{w\in\mathcal{A}_\varepsilon}\|w\|_{L^\infty(\Omega)}<\infty,
\end{equation*}
for some $\varepsilon_0>0$.
\end{theo}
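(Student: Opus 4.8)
The plan is to follow the line of \cite{J.M.Arrieta2000}, splitting the argument into three parts: (i) each semigroup $\{T_\varepsilon(t);\,t\geqslant0\}$ is gradient-like and dissipative in $H^1(\Omega)$, hence admits a global attractor; (ii) the dissipativity estimates are uniform in $\varepsilon$, which gives the uniform $H^1(\Omega)$ bound; (iii) a bootstrap based on the smoothing of $e^{-A_\varepsilon t}$, with constants uniform in $\varepsilon$, upgrades this to the uniform $L^\infty(\Omega)$ bound.

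For (i), the growth, sign and dissipativeness conditions imposed on $f^\varepsilon,g^\varepsilon$ make the functional
\[
E_\varepsilon(u)=\tfrac12\|u\|_{X_\varepsilon^{1/2}}^2-\int_\Omega F^\varepsilon(u)\,dx-\int_\Gamma G^\varepsilon(\gamma_{tr}(u))\,d\sigma,
\]
where $F^\varepsilon,G^\varepsilon$ are the primitives of $f^\varepsilon,g^\varepsilon$, a continuous Lyapunov functional for $\{T_\varepsilon(t)\}$: differentiating along a classical solution one gets $\frac{d}{dt}E_\varepsilon(u(t))=-\|\partial_tu(t)\|_{L^2(\Omega)}^2\leqslant0$, and $E_\varepsilon$ is stationary along a trajectory only at an equilibrium. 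Together with the dissipative sign conditions, which bound each trajectory in $H^1(\Omega)$, this shows bounded sets have bounded positive orbits. Moreover, since $A_\varepsilon$ is self-adjoint, positive (after the translation fixed in Section~\ref{Sec2}) and has compact resolvent, the variation-of-constants formula \eqref{semigroupononlinear} together with the regularising action of $e^{-A_\varepsilon t}$ — recall $h^\varepsilon:H^1(\Omega)\to H^{-\beta}(\Omega)$ with $-\beta>-\tfrac12$, so $e^{-A_\varepsilon t}h^\varepsilon(\cdot)$ lifts $H^{-\beta}(\Omega)$ into $X_\varepsilon^{1/2+\delta}$ for some $\delta>0$ (there is room since $\beta<\tfrac12$) with a locally integrable singularity at $t=0$ — maps bounded sets into bounded subsets of $X_\varepsilon^{1/2+\delta}$, which embeds compactly in $X_\varepsilon^{1/2}=H^1(\Omega)$. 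Hence $\{T_\varepsilon(t)\}$ is asymptotically compact and, being gradient-like with bounded set of equilibria, possesses a global attractor $\mathcal{A}_\varepsilon=\bigcup_{u^*\in\mathcal{E}_\varepsilon}W^u(u^*)$ in $H^1(\Omega)$.

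For (ii), every constant entering the construction of the absorbing set is uniform in $\varepsilon\in[0,\varepsilon_0]$: by Lemma~\ref{equivalence_norm} the $X_\varepsilon^{1/2}$-norms are equivalent to $\|\cdot\|_{H^1(\Omega)}$ uniformly, the embedding constants of $X_\varepsilon^\beta\hookrightarrow H^{2\beta}(\Omega)$ are uniform, the coefficients satisfy $\sup_\varepsilon\{\|p_\varepsilon\|_{L^\infty(\Omega)},\|\lambda+V_\varepsilon\|_{L^\infty(\Omega)},\|\lambda+b_\varepsilon\|_{L^\infty(\Gamma)}\}\leqslant M_0$, and by \eqref{conv_f} the growth and dissipativity constants of $f^\varepsilon,g^\varepsilon$ are controlled by those of $f^0,g^0$. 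So the absorbing set in $H^1(\Omega)$ can be chosen independent of $\varepsilon$, which yields $\sup_{\varepsilon\in[0,\varepsilon_0]}\sup_{w\in\mathcal{A}_\varepsilon}\|w\|_{H^1(\Omega)}<\infty$. For (iii), fix $w\in\mathcal{A}_\varepsilon$ and a bounded global solution $\xi_\varepsilon:\mathbb{R}\to\mathcal{A}_\varepsilon$ with $\xi_\varepsilon(0)=w$; by (ii) it is bounded in $H^1(\Omega)$, hence in $L^{q_0}(\Omega)$ for some $q_0>2$, uniformly in $\varepsilon$. Writing $\xi_\varepsilon(0)=e^{-A_\varepsilon t}\xi_\varepsilon(-t)+\int_{-t}^0e^{A_\varepsilon s}h^\varepsilon(\xi_\varepsilon(s))\,ds$ and using the $L^{p}(\Omega)\to L^{q}(\Omega)$ smoothing of $e^{-A_\varepsilon t}$ with constants uniform in $\varepsilon$, one raises the integrability of $\xi_\varepsilon$ in finitely many steps (an Alikakos–Moser iteration) up to $L^\infty(\Omega)$, with a bound depending only on $N$, $\Omega$, $M_0$ and the growth constants of $f^\varepsilon,g^\varepsilon$; alternatively one obtains the $L^\infty$ bound directly from the dissipative sign condition by testing against truncations $(\xi_\varepsilon-k)_+$. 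Either way the bound is uniform in $\varepsilon$, giving the second assertion.

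I expect the main obstacle to be step (iii), more precisely checking that the smoothing and parabolic-regularity estimates used in the bootstrap are genuinely uniform in $\varepsilon$: this rests on a uniform sectorial estimate for the family $\{A_\varepsilon\}$, which has to be extracted from the uniform equivalence of norms in Lemma~\ref{equivalence_norm} and the uniform spectral lower bound $m_0$ rather than taken for granted, and on the fact that the growth exponents of $f^\varepsilon,g^\varepsilon$ — not merely their convergence rates — are controlled uniformly in $\varepsilon$.
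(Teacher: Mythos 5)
Your outline is essentially sound, but note that the paper itself does not prove this theorem: it is quoted directly from \cite{J.M.Arrieta2000}, where exactly the programme you describe (gradient structure via the energy functional, uniform dissipativity giving an $\varepsilon$-independent absorbing set, and a bootstrap to uniform $L^\infty(\Omega)$ bounds on the attractors) is carried out. So your proposal is not a different route; it is a reconstruction of the argument the paper delegates to that reference, and in outline it matches it. Two small points of care. First, the uniformity in $\varepsilon$ of the growth, sign and dissipativeness constants of $f^\varepsilon,g^\varepsilon$ is part of the standing hypotheses (``standard growth, sign and dissipative conditions stated in \cite{J.M.Arrieta2000}''), not something you can deduce from \eqref{conv_f}: that condition only controls $\|f^\varepsilon(u)-f^0(u)\|_{L^2(\Omega)}$ and $\|g^\varepsilon(v)-g^0(v)\|_{H^{-1/2}(\Gamma)}$ uniformly in $u,v$, which does not by itself transfer pointwise sign conditions from $f^0,g^0$ to $f^\varepsilon,g^\varepsilon$; so state the uniformity as an assumption rather than derive it from \eqref{conv_f}. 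Second, you cannot invoke the globally Lipschitz cut-off nonlinearities at this stage, since that cut-off is performed only after the uniform $L^\infty$ bound of this theorem is available; your bootstrap must therefore run with the original nonlinearities under the growth hypotheses, as you indeed indicate, with the sectorial and smoothing constants of $A_\varepsilon$ made uniform via Lemma~\ref{equivalence_norm} and the lower bound $m_0$. With these caveats your sketch is consistent with the cited proof.
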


\begin{rem}\label{f_globally_Lipschitz}
Once the uniform bound in $L^\infty(\Omega)$ for the attractors has been obtained, we may perform a cutoff to the nonlinearities $f^\varepsilon$ and $g^\varepsilon$ so that the new nonlinearity is globally Lipschitz and globally bounded with bounded derivatives up to second order with Lipschitz derivative. See that it coincides with the original one in a $L^\infty$-neighborhood of all the attractors and is strictly dissipative outside this neighborhood. This guarantees that the system with the new nonlinearities has an attractor that coincides exactly with the original ones. 
\end{rem}

In order to apply the theory of rate of convergence of attractors of the \ref{SecA}, we need to obtain estimates of the nonlinear semigroup $T_\varepsilon(\cdot)$ in a bunded set $D$ given by 
$$
D=\cup_{\varepsilon\in[0,\varepsilon_0]} \mathcal{A}_\varepsilon.
$$ 
Notice that, by Theorem \ref{uniform_bounds} if $u\in D$, then there is a constant $C>0$ independent of $\varepsilon$ such that $\|u\|_{H^1(\Omega)},$ $\|u\|_{H^\frac{1}{2}(\Gamma)}\leqslant C$.

\begin{theo}
There exists $C>0$ independent of $\varepsilon$ such that, for all $u$, $v\in D$, 
\begin{multline}\label{estimate_h}
\max\{ \|h^\varepsilon(u)-h^0(v)\|_{H^{-\beta}(\Omega)},\|(h^\varepsilon)^\prime(u)-(h^0)^\prime(v)\|_{\mathcal{L}(H^1(\Omega),H^{-\beta}(\Omega))} \}\\ \leqslant C \Big\{ \|u-v\|_{H^1(\Omega)}+ \tau(\varepsilon) + \kappa(\varepsilon) + \xi(\varepsilon) \Big\}.
\end{multline}
\end{theo}
\begin{proof}
It follows from \eqref{conv_f} that
\begin{align*}
\|f^\varepsilon(u)-f^0(v)\|_{L^2(\Omega)}&\leqslant \|f^\varepsilon(u)-f^0(u)\|_{L^2(\Omega)}+\|f^0(u)-f^0(v)\|_{L^2(\Omega)}\\
&\leqslant  \kappa(\varepsilon) +L_{f^0}\|u-v\|_{L^2(\Omega)},
\end{align*}
where $L_{f^0}$ is the Lipschitz constant of $f^0$ according with the Remark \ref{f_globally_Lipschitz}.

Similarly, we can prove that 
$$
\|(f^\varepsilon)^\prime(u)-(f^0)^\prime(v)\|_{L^2(\Omega)}\leqslant \kappa(\varepsilon) +L_{(f^0)^\prime}\|u-v\|_{L^2(\Omega)},
$$
The same argument shows that 
$$
\|g^\varepsilon(u)-g^0(v)\|_{L^2(\Gamma)}\leqslant \xi(\varepsilon) +L_{g^0}\|u-v\|_{L^2(\Gamma)}
$$
and
$$
\|(g^\varepsilon)^\prime(u)-(g^0)^\prime(v)\|_{L^2(\Gamma)}\leqslant \xi(\varepsilon) +L_{(g^0)^\prime}\|u-v\|_{L^2(\Gamma)}.
$$

By assumptions, we have $\frac{1}{q}+\frac{1}{r_1}+\frac{1}{2}=1$ where $H^1(\Gamma)\subset L^{r_1}(\Gamma)$. By \eqref{conv_v}, Holder inequality and generalized Holder inequality, we have
\begin{align*}
|\pin{h^\varepsilon(u)-h^0(v),\varphi}_{-\beta,\beta}|&\leqslant \int_\Omega |f^\varepsilon(u)-f^0(v)||\varphi|\,dx+\int_\Gamma |g^\varepsilon(u)-g^0(v)||\varphi|\,d\sigma\\
&+\int_\Gamma |\lambda_2||u-v||\varphi|\,d\sigma+\int_\Gamma |b_\varepsilon-b_0||u||\varphi|\,d\sigma+\int_\Gamma |b_0||u-v||\varphi|\,d\sigma\\
&\leqslant L_{f^0}\|u-v\|_{L^2(\Omega)}\|\varphi\|_{H^1(\Omega)}+\kappa(\varepsilon)\|\varphi\|_{H^1(\Omega)}\\
&+ L_{g^0}\|u-v\|_{L^2(\Gamma)}\|\varphi\|_{H^1(\Gamma)}+\xi(\varepsilon)\|\varphi\|_{H^1(\Gamma)}\\ 
&+|\lambda_2|\|u-v\|_{L^2(\Gamma)}\|\varphi\|_{H^1(\Gamma)}\\
&+\|b_\varepsilon-b_0\|_{L^q(\Gamma)}\|u\|_{L^2(\Gamma)}\|\varphi\|_{L^{r_1}(\Gamma)}\\
&+\|b_0\|_{L^q(\Gamma)}\|u-v\|_{L^2(\Gamma)}\|\varphi\|_{L^{r_1}(\Gamma)}.
\end{align*}
In the same way, we obtain the estimate for the derivative. 
\end{proof}

\section{Convergence of the resolvent operators}\label{Sec3}

In this section, we study the convergence of solutions of the elliptic problem associated with \eqref{eq_reaction_diffusion} and their consequences. We start stating some result of \cite{Arrieta} concerning the elliptic problem  $A_\varepsilon u^\varepsilon = f$, where $A_\varepsilon$ is the operator defined in \eqref{ell_operator}-\eqref{ell_operator2} and $f\in L^2(\Omega)$. We are interested in the convergence of the resolvent operators $A_\varepsilon^{-1}$ to $A_0^{-1}$, as we will see this convergence is of order $\|p_\varepsilon-p_0\|_{L^\infty(\Omega)}+\eta(\varepsilon)$. 

The spectrum $\sigma(-A_\varepsilon)$ of $-A_\varepsilon$, ordered and counting multiplicity is given by 
\begin{equation}\label{spectro_A}
...-\lambda^\varepsilon_m<-\lambda^\varepsilon_{m-1}<...<-\lambda_0^\varepsilon
\end{equation}
with $\{\varphi_i^\varepsilon\}_{i=0}^\infty$ the matching eigenfunctions, for any $\varepsilon\in[0,\varepsilon_0]$. We consider the spectral projection onto the space generated by the first $m$ eigenvalues, i.e., if $\omega$ is an appropriated closed curve in the resolvent set $\rho(-A_0)$ of $-A_0$ around $\{-\lambda_0^0,...,-\lambda_{m-1}^0 \}$, then we define the spectral projection
\[
Q_\varepsilon=\frac{1}{2\pi i}\int_{\omega} (\mu+A_\varepsilon)^{-1}\,d\mu,\quad \varepsilon\in[0,\varepsilon_0].
\]

\begin{theo}\label{resolvent_convergence} It is valid the following statements.
\begin{itemize}
\item[(i)] There exists  constant $C>0$ independent of $\varepsilon$ such that
\begin{equation}
\sup_{\varepsilon\in[0,\varepsilon_0]}\|A_\varepsilon^{-1}\|_{\mathcal{L}(H^{-\beta}(\Omega),H^1(\Omega))}\leqslant C.
\end{equation}

\item[(ii)] There exists  constant $C>0$ independent of $\varepsilon$ such that
\begin{equation}
\|A_\varepsilon^{-1}-A_0^{-1}\|_{\mathcal{L}(H^{-\beta}(\Omega),H^1(\Omega))}\leqslant C(\|p_\varepsilon-p_0\|_{L^\infty(\Omega)}+\eta(\varepsilon)).
\end{equation}
\item[(iii)] There exists $\phi\in (\frac{\pi}{2},\pi)$ such that for all 
\[
\mu \in\Sigma_{\nu,\phi}=\{\mu\in\mathbb{C}:|\tn{arg}(\mu+\nu)|\leqslant \phi \}\setminus\{\mu\in\mathbb{C}:|\mu+\nu|\leqslant  r\},
\]
for some $r,\nu>0$, it is valid
\begin{equation}\label{resolvent_estimate2}
\|(\mu+A_\varepsilon)^{-1}-(\mu+A_0)^{-1}\|_{\mathcal{L}(H^{-\beta}(\Omega),H^1(\Omega))}\leqslant C(\|p_\varepsilon-p_0\|_{L^\infty(\Omega)}+\eta(\varepsilon)),
\end{equation}
where $C=C(\mu)>0$ is a constant independent of $\varepsilon$.
\item[(iv)] The family of projections $Q_\varepsilon$ converges to $Q_0$ in the uniform operator topology as $\varepsilon\to 0^+$ and 
\begin{equation}\label{theo_spectral_projection}
\|Q_\varepsilon-Q_0\|_{\mathcal{L}(H^{-\beta}(\Omega),H^1(\Omega))}\leqslant C(\|p_\varepsilon-p_0\|_{L^\infty(\Omega)}+\eta(\varepsilon)),
\end{equation}
where $C>0$ independent of $\varepsilon$.
\item[(v)] If $\lambda_0\in \sigma(A_0)$ and if there is $\lambda_\varepsilon\in\sigma(A_\varepsilon)$ such that  $\lambda_\varepsilon\overset{\varepsilon\to 0^+}\longrightarrow \lambda_0$, then 
$$
|\lambda_\varepsilon-\lambda_0|\leqslant C(\|p_\varepsilon-p_0\|_{L^\infty(\Omega)}+\eta(\varepsilon)).
$$ 
\end{itemize}
\end{theo}
\begin{proof}
Although here we are considering the Neumann boundary condition \eqref{ell_operator2} and the potentials $V_\varepsilon$ in \eqref{ell_operator}, the proof follows the same arguments used in \cite[Section 3]{Arrieta} where the authors have considered the problem with Dirichlet homogeneous boundary conditions without potentials.
\end{proof}

Now, we prove a generalization of Lemma 3.1 of \cite{Arrieta} where we add the boundary terms in the bilinear form $a_\varepsilon$.  We denote 
$$
b_\varepsilon(u,\varphi)=\int_\Gamma (\lambda_2+b_\varepsilon) u\varphi\,d\sigma,
$$
for all $u,\varphi\in L^2(\Gamma)$.

In what follows, to avoid heavy notation, we omit the trace operator $\gamma_{tr}$.

\begin{lem}\label{elliptic_lemma}
For $v \in H^{-\beta}(\Omega)$, with $\beta \in (1/2, 1]$, $\|v\|_{H^{-\beta}(\Omega)} \leqslant 1$ and $\varepsilon\in [0,\varepsilon_0]$, let $u^\varepsilon$ be the solution of the problem
$\pin{A_\varepsilon u^\varepsilon,\cdot}_{-\beta,\beta}+b_\varepsilon(u^\varepsilon,\cdot) =\pin{v,\cdot}_{-\beta,\beta}$. Then, there is a constant $C>0$ independent of $\varepsilon$ such that
\begin{equation}\label{lemma_estimate}
\|u^\varepsilon-u^0\|_{H^1(\Omega)}\leqslant C[\|p_\varepsilon-p_0\|_{L^\infty(\Omega)}+\eta(\varepsilon)+ \tau(\varepsilon)].
\end{equation}
\end{lem}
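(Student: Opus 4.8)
The plan is to reduce the whole estimate to a single use of the uniform norm equivalence of Lemma~\ref{equivalence_norm}. Since $u^\varepsilon$ and $u^0$ solve the elliptic problem with the \emph{same} datum $v$, one has $A_\varepsilon u^\varepsilon=v=A_0u^0$, and, adding and subtracting $A_0u^\varepsilon$, I would rewrite this as $A_0(u^\varepsilon-u^0)=(A_0-A_\varepsilon)u^\varepsilon$. Hence it suffices to: (i) obtain a uniform $H^1(\Omega)$-bound for $u^\varepsilon$; (ii) estimate $\|(A_0-A_\varepsilon)u^\varepsilon\|_{H^{-1}(\Omega)}$ with the right order in $\varepsilon$; and (iii) invert $A_0$ with a constant independent of $\varepsilon$. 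Steps (i) and (iii) follow immediately from Lemma~\ref{equivalence_norm}, since that lemma says precisely that $\pin{A_\varepsilon\cdot,\cdot}_{-1,1}$ is a bilinear form on $H^1(\Omega)$ coercive and bounded \emph{uniformly} in $\varepsilon$.

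For (i), I would pair $A_\varepsilon u^\varepsilon=v$ with $u^\varepsilon$ and use Lemma~\ref{equivalence_norm}:
\[
m_0^2\|u^\varepsilon\|_{H^1(\Omega)}^2\le\|u^\varepsilon\|_{X_\varepsilon^{1/2}}^2=\pin{A_\varepsilon u^\varepsilon,u^\varepsilon}_{-1,1}=\pin{v,u^\varepsilon}_{-1,1}\le\|v\|_{H^{-1}(\Omega)}\,\|u^\varepsilon\|_{H^1(\Omega)},
\]
and since $-\beta\ge-\tfrac12$ the embedding $H^{-\beta}(\Omega)\hookrightarrow H^{-1}(\Omega)$ holds with an $\varepsilon$-independent constant (Section~\ref{Sec2}), so $\|v\|_{H^{-1}(\Omega)}\le C\|v\|_{H^{-\beta}(\Omega)}\le C$; this bounds $\sup_{\varepsilon\in[0,\varepsilon_0]}\|u^\varepsilon\|_{H^1(\Omega)}$. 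For (ii), I would use the explicit bilinear form \eqref{bilinear_form}: the $\lambda$-terms cancel, so for every $\varphi\in H^1(\Omega)$
\[
\pin{(A_0-A_\varepsilon)u^\varepsilon,\varphi}_{-1,1}=\int_\Omega(p_0-p_\varepsilon)\nabla u^\varepsilon\nabla\varphi\,dx+\int_\Omega(V_0-V_\varepsilon)u^\varepsilon\varphi\,dx+\int_\Gamma(b_0-b_\varepsilon)u^\varepsilon\varphi\,d\sigma;
\]
I would bound the first two integrals by the Cauchy--Schwarz inequality together with \eqref{conv_p} and \eqref{conv_v}, and the boundary integral by \eqref{conv_v} and the trace inequality $\|\gamma_{tr}(w)\|_{L^2(\Gamma)}\le C\|w\|_{H^1(\Omega)}$ (uniform in $\varepsilon$). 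Taking the supremum over $\|\varphi\|_{H^1(\Omega)}\le1$ and inserting (i) gives $\|(A_0-A_\varepsilon)u^\varepsilon\|_{H^{-1}(\Omega)}\le C[\|p_\varepsilon-p_0\|_{L^\infty(\Omega)}+\eta(\varepsilon)+\tau(\varepsilon)]$.

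Finally, applying Lemma~\ref{equivalence_norm} with $\varepsilon=0$ to $w=u^\varepsilon-u^0$,
\[
m_0^2\|u^\varepsilon-u^0\|_{H^1(\Omega)}^2\le\pin{A_0(u^\varepsilon-u^0),u^\varepsilon-u^0}_{-1,1}=\pin{(A_0-A_\varepsilon)u^\varepsilon,u^\varepsilon-u^0}_{-1,1}\le\|(A_0-A_\varepsilon)u^\varepsilon\|_{H^{-1}(\Omega)}\|u^\varepsilon-u^0\|_{H^1(\Omega)},
\]
and dividing by $\|u^\varepsilon-u^0\|_{H^1(\Omega)}$ yields \eqref{lemma_estimate}; since everything is linear in $v$ and the constants do not depend on $v$, this also delivers the statement at the level of $\mathcal{L}(H^1(\Omega),H^{-\beta}(\Omega))$. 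I expect the only genuine difficulty to be bookkeeping: one must keep every constant --- the equivalence constants of Lemma~\ref{equivalence_norm}, the embedding $H^{-\beta}(\Omega)\hookrightarrow H^{-1}(\Omega)$, and the trace constant --- independent of $\varepsilon$, which is exactly what the uniform lower bound $m_0\le p_\varepsilon$, the convergence \eqref{conv_p}, and the uniform embeddings recorded in Section~\ref{Sec2} provide. The treatment of the boundary potential term involving $b_\varepsilon$ through the trace theorem is the one step that genuinely uses the Robin structure of the problem.
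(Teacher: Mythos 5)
Your proof is correct and follows essentially the same energy-method argument as the paper: testing the bilinear form against $u^\varepsilon-u^0$, isolating the coercive term, and bounding the residual difference terms via \eqref{conv_p}--\eqref{conv_v} and the trace inequality. The only (cosmetic) difference is a symmetric swap of roles: you write $A_0(u^\varepsilon-u^0)=(A_0-A_\varepsilon)u^\varepsilon$ and therefore need the uniform $H^1$-bound on $u^\varepsilon$ from step~(i), whereas the paper writes $A_\varepsilon(u^\varepsilon-u^0)=(A_0-A_\varepsilon)u^0$ so that only the fixed limit solution $u^0$ appears and no uniform-in-$\varepsilon$ bound is needed.
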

\begin{proof}
It follows from \eqref{condlam} that $a_\varepsilon+b_\varepsilon$ is symmetric, continuous and coercive thus, the problem $\pin{A_\varepsilon u^\varepsilon,\cdot}_{-\beta,\beta}+b_\varepsilon(u^\varepsilon,\cdot) =\pin{v,\cdot}_{-\beta,\beta}$ is solvable. Let $u^\varepsilon$ be the solution. Then
\begin{equation}\label{weak_solution}
\int_\Omega p_\varepsilon\nabla u^\varepsilon\nabla\varphi \,dx+\int_\Omega (\lambda_1+V_\varepsilon) u^\varepsilon\varphi\,dx+\int_\Gamma (\lambda_2+b_\varepsilon) u^\varepsilon\varphi\,d\sigma= \pin{v,\varphi}_{-\beta,\beta}
\end{equation}
for all  test function $\varphi \in H^1(\Omega)$ and $\varepsilon\in [0,\varepsilon_0]$.
 
If we take $\varphi=u^\varepsilon-u^0$ in \eqref{weak_solution} for $\varepsilon>0$ and $\varepsilon=0$, we obtain 
\begin{multline*}
\int_\Omega p_\varepsilon\nabla u^\varepsilon(\nabla u^\varepsilon-\nabla u^0)\,dx+\int_\Omega (\lambda_1+V_\varepsilon) u^\varepsilon(u^\varepsilon-u^0)\,dx \\ +\int_\Gamma (\lambda_2+b_\varepsilon) u^\varepsilon(u^\varepsilon-u^0)\,d\sigma = \pin{v,u^\varepsilon-u^0}_{-\beta,\beta}
\end{multline*}
and
\begin{multline*}
\int_\Omega p_0\nabla u^0(\nabla u^\varepsilon-\nabla u^0)\,dx+\int_\Omega (\lambda_1+V_0) u^0(u^\varepsilon-u^0)\,dx \\ +\int_\Gamma (\lambda_2+b_0) u^0(u^\varepsilon-u^0)\,d\sigma = \pin{v,u^\varepsilon-u^0}_{-\beta,\beta}.
\end{multline*}
Now, subtracting the above expressions, we have
\[ \begin{split}
&\int_\Omega p_\varepsilon|\nabla u^\varepsilon-\nabla u^0|^2\,dx  +\int_\Omega (\lambda_1+V_\varepsilon)|u^\varepsilon-u^0|^2\,dx+\int_\Gamma (\lambda_2+b_\varepsilon)|u^\varepsilon-u^0|^2\,d\sigma \\
& = \int_\Omega (p_0-p_\varepsilon) \nabla u^0(\nabla u^\varepsilon-\nabla u^0) \,dx+\int_\Omega (V_0-V_\varepsilon) u^0(u^\varepsilon-u^0)\,dx+\int_\Gamma (b_0-b_\varepsilon) u^0(u^\varepsilon-u^0)\,d\sigma.
\end{split} \]

By assumptions, we have $\frac{1}{p}+\frac{1}{r_0}+\frac{1}{2}=1$ and $\frac{1}{q}+\frac{1}{r_1}+\frac{1}{2}=1$ where $H^1(\Omega)\subset L^{r_0}(\Omega)$ and $H^1(\Gamma)\subset L^{r_1}(\Gamma)$. By Holder inequality and generalized Holder inequality, we have
\[ \begin{split}
\|u^\varepsilon-&u^0\|_{H^1(\Omega)}^2  \leqslant \int_\Omega (p_0-p_\varepsilon) \nabla u^0(\nabla u^\varepsilon-\nabla u^0) \,dx+\int_\Omega (V_0-V_\varepsilon) u^0(u^\varepsilon-u^0)\,dx \\ & \qquad +\int_\Gamma (b_0-b_\varepsilon) u^0(u^\varepsilon-u^0)\,d\sigma \\
& \leqslant \| \nabla u^0 \|_{L^2(\Omega)} \|\nabla u^\varepsilon-\nabla u^0\|_{L^2(\Omega)} \|p_\varepsilon-p_0\|_{L^\infty(\Omega)} + \| u^0 \|_{L^{r_0}(\Omega)} \|V_\varepsilon-V_0\|_{L^p(\Omega)} \|u^\varepsilon-u^0\|_{L^2(\Omega)}\\ & \qquad + \| u^0 \|_{L^{r_1}(\Gamma)} \|b_\varepsilon-b_0\|_{L^q(\Gamma)} \|u^\varepsilon-u^0\|_{L^2(\Gamma)} \\
& \leqslant C \|u^\varepsilon-u^0\|_{H^1(\Omega)} \left( \|p_\varepsilon-p_0\|_{L^\infty(\Omega)} + \|V_\varepsilon-V_0\|_{L^p(\Omega)} + \|b_\varepsilon-b_0\|_{L^q(\Gamma)}\right),
\end{split} \]
where $C$ depends on $u^0$ (and then, on $v$) and embedding constants, but is independent of $\varepsilon$.  Finally, from \eqref{conv_p} and \eqref{conv_v}, we obtain \eqref{lemma_estimate}.

\end{proof}

The equilibrium solutions of \eqref{semilinear_problem} are those which are independent of time, i.e., for $\varepsilon \in [0,\varepsilon_0]$, they are the solutions of the elliptic problem  $ A_\varepsilon u^\varepsilon-h^\varepsilon(u^\varepsilon)=0$. We denote by $\mathcal{E}_\varepsilon$ the set of the equilibrium solutions of \eqref{semilinear_problem}  and we say that $u^\varepsilon_* \in \mathcal{E}_\varepsilon$ is  hyperbolic if
$$\sigma(A_\varepsilon-(h^\varepsilon)'(u_*^\varepsilon))\cap \{\mu\in\mathbb{C}\,:\, Re(\mu)=0\}=\emptyset.$$
Since hyperbolicity of the equilibrium is a quite common property for reaction-diffusion equations, we assume w.l.g. $\mathcal{E}_0$ is composed of finitely many hyperbolic equilibrium points. 

The next results prove that the family $\{\mathcal{E}_\varepsilon\}_{\varepsilon\in [0,\varepsilon_0]}$ is continuous at $\varepsilon= 0$, thus for $\varepsilon$ sufficiently small, $\mathcal{E}_\varepsilon$ is composed by a finite number of hyperbolic equilibrium points.

We have $u_*^\varepsilon\in\mathcal{E}_\varepsilon$ and $u_*^0\in \mathcal{E}_0$ if and only if, 
\begin{equation}\label{eq_eq1}
u_*^0=(A_0+R_0)^{-1}[h^0(u_*^0)+R_0u_*^0]\quad\tn{and}\quad u_*^\varepsilon=(A_\varepsilon+R_\varepsilon)^{-1}[h^\varepsilon(u_*^\varepsilon)+R_\varepsilon u_*^\varepsilon],
\end{equation}
where $R_0=-(h^0)'(u_*^0)$ and $R_\varepsilon=-(h^\varepsilon)'(u_*^\varepsilon)$. 

We denote $\bar{A}_\varepsilon=A_\varepsilon+R_\varepsilon$, $\bar{A}_0=A_\varepsilon+R_0$, $B_\varepsilon=R_\varepsilon A_\varepsilon^{-1}$ and $B_0=R_0 A_0^{-1}$. We assume initially $\bar{A}_\varepsilon$ invertible. This property will be a consequence of the hyperbolicity of $u^\varepsilon_\ast$.

\begin{prop}\label{prop_conv_BB}
There exists a positive constant $C$ independent of $\varepsilon$ such that,
\begin{itemize}
\item[(i)] $\sup_{\varepsilon\in[0,\varepsilon_0]}\|(A_\varepsilon+R_\varepsilon)^{-1}\|_{\mathcal{L}(H^1(\Omega),H^{-\beta}(\Omega))}\leqslant C.$
\item[(ii)] $\sup_{\varepsilon\in[0,\varepsilon_0]}\|(I+B_\varepsilon)^{-1}\|_{\mathcal{L}(H^1(\Omega),H^{-\beta}(\Omega))}\leqslant C.$
\item[(iii)] $\|B_\varepsilon-B_0\|_{H^1(\Omega)}\leqslant C(\|p_\varepsilon-p_0\|_{L^\infty(\Omega)}+\eta(\varepsilon)+ \tau(\varepsilon)+\kappa(\varepsilon)+\xi(\varepsilon))$.
\end{itemize}
\end{prop}
\begin{proof} The proofs of (i) and (ii) follow the same arguments of \cite[Lemma 4.3]{Arrieta}. To prove (iii) notice that since $u^\varepsilon_\ast$ and $u^0_\ast$ are equilibrium points then $u^\varepsilon_\ast,u^0_\ast\in D$. 

By definition
\begin{align*}
\|B_\varepsilon-B_0\|_{H^1(\Omega)}&=\|R_\varepsilon A_\varepsilon^{-1}-R_0 A_0^{-1} \|_{H^1(\Omega)}  \\
&\leqslant \|R_\varepsilon-R_0\|_{\mathcal{L}(H^1(\Omega),H^{-\beta}(\Omega))}\|A_\varepsilon^{-1}\|_{\mathcal{L}(H^1(\Omega),H^{-\beta}(\Omega))}\\
&+\|R_0\|_{\mathcal{L}(H^1(\Omega),H^{-\beta}(\Omega))}\|A_\varepsilon^{-1}-A_0^{-1}\|_{\mathcal{L}(H^1(\Omega),H^{-\beta}(\Omega))}.
\end{align*}
The result follows from Theorem \eqref{resolvent_convergence} and \eqref{estimate_h}.
\end{proof}

\begin{theo}\label{theo_conv_eq}
It is valid the following convergence
\begin{equation}
\|\bar{A}_\varepsilon^{-1}-\bar{A}_0^{-1}\|_{\mathcal{L}(H^{-\beta}(\Omega),H^1(\Omega))}\leqslant  C(\|p_\varepsilon-p_0\|_{L^\infty(\Omega)}+\eta(\varepsilon)+\tau(\varepsilon)+\kappa(\varepsilon)+\xi(\varepsilon)).
\end{equation}
\end{theo}
\begin{proof}It is valid the following identity (\cite[Propositin 6.6]{Arrieta})
\begin{equation}
\bar{A}_\varepsilon^{-1}-\bar{A}_0^{-1}=(A_\varepsilon^{-1}-A_0^{-1})(I+B_0)^{-1}-A_\varepsilon^{-1}(I+B_0)^{-1}(B_\varepsilon-B_0)(I+B_\varepsilon)^{-1}
\end{equation}
The result follows from Theorem \eqref{resolvent_convergence} and Proposition \eqref{prop_conv_BB}.
\end{proof}

\begin{theo}\label{rate_equilibrium_points}
Let $u_*^0\in\mathcal{E}_0$. Then for each $\varepsilon>0$ sufficiently small, there is $\delta>0$ such that the equation $A_\varepsilon u-h^\varepsilon(u)=0$ has unique solution $u_*^\varepsilon\in \{u\in H^1(\Omega);\|u-u_*^0\|_{H^1(\Omega)}\leqslant \delta\}$. Moreover
\begin{equation}\label{rate_of_equilibrium}
\|u_*^\varepsilon-u_*^0\|_{H^1(\Omega)}\leqslant C[\|p_\varepsilon-p_0\|_{L^\infty(\Omega)}+\eta(\varepsilon)+ \tau(\varepsilon)+\kappa(\varepsilon)+\xi(\varepsilon)].
\end{equation} 
\end{theo}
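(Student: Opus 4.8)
The plan is to characterize the equilibria as fixed points of the solution operator and then run a quantitative implicit function argument around the hyperbolic point $u_*^0$. First I would observe that $u\in\mathcal{E}_\varepsilon$ if and only if $u=A_\varepsilon^{-1}h^\varepsilon(u)$ in $H^1(\Omega)$, so it is natural to set $\Phi_\varepsilon(u)=u-A_\varepsilon^{-1}h^\varepsilon(u)$, with $\Phi_\varepsilon\colon H^1(\Omega)\to H^1(\Omega)$ well defined since $h^\varepsilon\colon H^1(\Omega)\to H^{-\beta}(\Omega)$ (and, after the cutoff of Section \ref{Sec2}, of class $C^2$ with bounded derivatives) while $A_\varepsilon^{-1}\in\mathcal{L}(H^{-\beta}(\Omega),H^1(\Omega))$. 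Hyperbolicity of $u_*^0$ means $0\notin\sigma(A_0-(h^0)'(u_*^0))$, which is equivalent to the invertibility in $\mathcal{L}(H^1(\Omega))$ of $L_0:=D\Phi_0(u_*^0)=I-A_0^{-1}(h^0)'(u_*^0)$ (the operator $A_0^{-1}(h^0)'(u_*^0)$ being compact, so $L_0$ is Fredholm of index zero and its injectivity gives invertibility). Thus $\Phi_0(u_*^0)=0$ with $D\Phi_0(u_*^0)=L_0$ boundedly invertible.

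Next I would establish existence, local uniqueness and convergence of $u_*^\varepsilon$. Using Theorem \ref{resolvent_convergence} together with \eqref{estimate_h} and \eqref{conv_f}, one checks that $\|\Phi_\varepsilon(u)-\Phi_0(u)\|_{H^1(\Omega)}\to 0$ and $\|D\Phi_\varepsilon(u)-D\Phi_0(u)\|_{\mathcal{L}(H^1(\Omega))}\to 0$ as $\varepsilon\to 0^+$, uniformly for $u$ in a bounded neighborhood of $u_*^0$. Since $D\Phi_0(u_*^0)=L_0$ is invertible, a perturbation argument shows $D\Phi_\varepsilon(u)$ is invertible with uniformly bounded inverse for $u$ near $u_*^0$ and $\varepsilon$ small; the uniform contraction principle (implicit function theorem with parameter) then yields $\delta>0$ and, for all sufficiently small $\varepsilon$, a unique zero $u_*^\varepsilon$ of $\Phi_\varepsilon$ in $\{u\in H^1(\Omega):\|u-u_*^0\|_{H^1(\Omega)}\leqslant\delta\}$, with $u_*^\varepsilon\to u_*^0$. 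This already gives the continuity of $\{\mathcal{E}_\varepsilon\}$ at $\varepsilon=0$ and, as stated, the fact that $\mathcal{E}_\varepsilon$ consists of finitely many hyperbolic equilibria for $\varepsilon$ small.

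For the rate \eqref{rate_of_equilibrium} I would subtract the two fixed-point identities $u_*^\varepsilon=A_\varepsilon^{-1}h^\varepsilon(u_*^\varepsilon)$ and $u_*^0=A_0^{-1}h^0(u_*^0)$ and split
\[
u_*^\varepsilon-u_*^0=A_\varepsilon^{-1}\big(h^\varepsilon-h^0\big)(u_*^\varepsilon)+\big(A_\varepsilon^{-1}-A_0^{-1}\big)h^0(u_*^\varepsilon)+A_0^{-1}\big(h^0(u_*^\varepsilon)-h^0(u_*^0)\big),
\]
then Taylor-expand $h^0(u_*^\varepsilon)-h^0(u_*^0)=(h^0)'(u_*^0)(u_*^\varepsilon-u_*^0)+R_\varepsilon$ with $\|R_\varepsilon\|_{H^{-\beta}(\Omega)}\leqslant C\|u_*^\varepsilon-u_*^0\|_{H^1(\Omega)}^2$, so that
\[
L_0\,(u_*^\varepsilon-u_*^0)=A_\varepsilon^{-1}\big(h^\varepsilon-h^0\big)(u_*^\varepsilon)+\big(A_\varepsilon^{-1}-A_0^{-1}\big)h^0(u_*^\varepsilon)+A_0^{-1}R_\varepsilon.
\]
The first term is bounded by $C(\kappa(\varepsilon)+\xi(\varepsilon))$ by \eqref{estimate_h} and the uniform bound on $\|A_\varepsilon^{-1}\|_{\mathcal{L}(H^{-\beta}(\Omega),H^1(\Omega))}$; the second by $C(\|p_\varepsilon-p_0\|_{L^\infty(\Omega)}+\eta(\varepsilon)+\tau(\varepsilon))$ by Theorem \ref{resolvent_convergence}, using that $\|h^0(u_*^\varepsilon)\|_{H^{-\beta}(\Omega)}\leqslant C$ thanks to the uniform bounds of Theorem \ref{uniform_bounds} and boundedness of $h^0$; the third by $C\|u_*^\varepsilon-u_*^0\|_{H^1(\Omega)}^2$. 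Applying $L_0^{-1}$ and using that $\|u_*^\varepsilon-u_*^0\|_{H^1(\Omega)}$ is already small (from the previous step), the quadratic term is absorbed into the left-hand side, which gives \eqref{rate_of_equilibrium}.

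\textbf{Main obstacle.} The delicate point is not the final bootstrap but the uniform-in-$\varepsilon$ implicit function setup in the $H^{-\beta}(\Omega)\to H^1(\Omega)$ scale: one must verify the uniform convergence $D\Phi_\varepsilon\to D\Phi_0$ near $u_*^0$ so that invertibility of $L_0$ transfers to $D\Phi_\varepsilon(u)$ with uniformly bounded inverses, which is precisely what makes $u_*^\varepsilon$ unique in the $\delta$-ball and Lipschitz in the perturbation parameters; once this is in place, everything else is a routine estimate.
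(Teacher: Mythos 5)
Your argument is correct, and it reaches \eqref{rate_of_equilibrium} by a genuinely different decomposition than the paper's. You phrase the equilibria as fixed points of $u\mapsto A_\varepsilon^{-1}h^\varepsilon(u)$, so hyperbolicity enters only through the invertibility of $L_0=I-A_0^{-1}(h^0)'(u_*^0)$ (via the Fredholm/compactness argument), and the rate follows from subtracting the two identities, Taylor-expanding $h^0$, applying $L_0^{-1}$ and absorbing the superlinear remainder; note that the full quadratic bound on the remainder is more than you need (and more than the cutoff argument cleanly provides) --- an $o(\|u_*^\varepsilon-u_*^0\|_{H^1(\Omega)})$ modulus coming from continuity of $(h^0)'$ already suffices for the absorption. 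The paper instead writes $u_*^\varepsilon=(A_\varepsilon+R_\varepsilon)^{-1}[h^\varepsilon(u_*^\varepsilon)+R_\varepsilon u_*^\varepsilon]$ with $R_\varepsilon=-(h^\varepsilon)'(u_*^\varepsilon)$, so hyperbolicity is used as invertibility of the shifted operators $A_\varepsilon+R_\varepsilon$, the resolvent rate of Theorem \ref{resolvent_convergence} has to be transferred to $(A_\varepsilon+R_\varepsilon)^{-1}-(A_0+R_0)^{-1}$ through an identity like \eqref{est_frac_resol}, and the terms containing $\|u_*^\varepsilon-u_*^0\|_{H^1(\Omega)}$ are absorbed by a choice of small $\delta$. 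What each route buys: yours uses Theorem \ref{resolvent_convergence} exactly as stated (for the unshifted resolvents) and makes the existence/local-uniqueness step explicit through the uniform implicit function theorem, which the paper outsources to the standard reference; the paper's route avoids the Taylor expansion of $h^0$ but pays for it with the extra resolvent identity for the shifted operators. Both proofs rest on the same tacit hypothesis, correctly flagged in your closing remark, namely that $(h^\varepsilon)'$ converges to $(h^0)'$ (uniformly near $u_*^0$) --- something not literally contained in \eqref{conv_f}--\eqref{estimate_h} and which the paper also glosses over with the phrase ``since $h^\varepsilon$ is continuously differentiable''; so this is not a gap relative to the paper, but it is the assumption that makes both the uniform invertibility of $D\Phi_\varepsilon$ and the paper's bound on $(R_\varepsilon-R_0)u_*^0$ legitimate.
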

\begin{proof}
The proof that $u^\varepsilon_\ast$ is an isolated hyperbolic equilibrium point is standard, see  \cite[Chapter 14]{A.N.Carvalho2010}. Here we need to prove the estimate \eqref{rate_of_equilibrium}. 

It follows from  \eqref{eq_eq1}, that $u^\varepsilon_\ast$ and $u^0_\ast$ are such that
\[ \begin{split}
\|u_*^\varepsilon-u_*^0\|_{H^1(\Omega)}&= \|(A_\varepsilon+R_\varepsilon)^{-1}[h^\varepsilon(u_*^\varepsilon)+R_\varepsilon u_*^\varepsilon]  -(A_0+R_0)^{-1}[h^0(u_*^0)+R_0u_*^0 ] \|_{H^1(\Omega)}\\
&\leqslant\| (A_\varepsilon+R_\varepsilon)^{-1}[h^\varepsilon(u_*^\varepsilon)+R_\varepsilon u_*^\varepsilon] -(A_\varepsilon+R_\varepsilon)^{-1}[h^0(u_*^0)+R_0 u_*^0]\|_{H^1(\Omega)} \\
&+ \|(A_\varepsilon+R_\varepsilon)^{-1}[h^0(u_*^0)+R_0 u_*^0]-  (A_0+R_0)^{-1}[h^0(u_*^0)+R_0u_*^0 ]  \|_{H^1(\Omega)}\\
&\leqslant\| (A_\varepsilon+R_\varepsilon)^{-1}[h^\varepsilon(u_*^\varepsilon)-h^0(u_*^0)+R_\varepsilon u_*^\varepsilon-R_0 u_*^0]\|_{H^1(\Omega)} \\
&+ \|(A_\varepsilon+R_\varepsilon)^{-1}-(A_0+R_0)^{-1}[h^0(u_*^0)+R_0u_*^0 ]  \|_{H^1(\Omega)}.
\end{split}
\]
By Theorem \ref{theo_conv_eq}, we have 
\[ \begin{split}
\|u_*^\varepsilon-u_*^0\|_{H^1(\Omega)}&\leqslant\| (A_\varepsilon+R_\varepsilon)^{-1}[h^\varepsilon(u_*^\varepsilon)-h^0(u_*^0)+R_\varepsilon u_*^\varepsilon-R_0 u_*^0]\|_{H^1(\Omega)} \\
&+ C(\|p_\varepsilon-p_0\|_{L^\infty(\Omega)}+ \tau(\varepsilon)+\kappa(\varepsilon)+\xi(\varepsilon)).
\end{split}
\]

Now we denote $z^\varepsilon=h^\varepsilon(u_*^\varepsilon)-h^0(u_*^0)+R_\varepsilon u_*^\varepsilon-R_0u_*^0$. Then we can write
\begin{align*}
z^\varepsilon &=h^\varepsilon(u_*^\varepsilon)-h^0(u_*^0)+R_\varepsilon u_*^\varepsilon-R_0u_*^0\\
&=h^\varepsilon(u_*^\varepsilon)-h^\varepsilon(u_*^0)+R_\varepsilon u_*^\varepsilon-R_\varepsilon u_*^0\\
&+h^\varepsilon(u_*^0)-h^0(u_*^0)+R_\varepsilon u_*^0-R_0u_*^0.
\end{align*}
Since $h^\varepsilon$ is continuously differentiable, by a  Mean Value Theorem type (see \cite[Lemma 4.1]{Arrieta}) for all $\delta>0$, we can take $\varepsilon_0$ sufficiently small such that 
$$
\|h^\varepsilon(u_*^\varepsilon)-h^\varepsilon(u_*^0)+R_\varepsilon u_*^\varepsilon-R_\varepsilon u_*^0\|_{H^1(\Omega)}\leqslant\delta \|u_*^\varepsilon-u_*^0\|_{H^1(\Omega)}
$$
and by \eqref{estimate_h}, we obtain  
$$
\|h^\varepsilon(u_*^0)-h^0(u_*^0)+R_\varepsilon u_*^0-R_0u_*^0\|_{H^1(\Omega)}\leqslant C(\tau(\varepsilon)+\kappa(\varepsilon)+\xi(\varepsilon)).
$$
Thus,
$$
\|(A_\varepsilon+R_\varepsilon)^{-1}z^\varepsilon\|_{H^1(\Omega)}\leqslant \delta \|(A_\varepsilon+R_\varepsilon)^{-1}\|_{\mathcal{L}(H^{-\beta}(\Omega),H^1(\Omega)}\|u_*^\varepsilon-u_*^0\|_{H^1(\Omega)}+C(\tau(\varepsilon)+\kappa(\varepsilon)+\xi(\varepsilon)). 
$$

Putting all estimates together and taking $\delta$ sufficiently small,  we have   
$$
\|u_*^\varepsilon-u_*^0\|_{H^1(\Omega)}\leqslant C[\|p_\varepsilon-p_0\|_{L^\infty(\Omega)}+\eta(\varepsilon)+\tau(\varepsilon)+\kappa(\varepsilon)+\xi(\varepsilon)]+\frac{1}{2}\|u_*^\varepsilon-u_*^0\|_{H^1(\Omega)}.
$$
\end{proof}

As a consequence of the Theorem \ref{rate_equilibrium_points}, we have the following result.  
  
\begin{cor}
The family  $\{\mathcal{E}_\varepsilon\}_{\varepsilon\in (0,\varepsilon_0]}$ is continuous at $\varepsilon=0$. That is, if $\mathcal{E}_0=\{u_*^{0,1},...,u_*^{0,k}\}$ then for $\varepsilon_0$ sufficiently small, $\mathcal{E}_\varepsilon=\{u_*^{\varepsilon,1},...,u_*^{\varepsilon,k}\}$  and 
$$
\|u_*^{\varepsilon,i}-u_*^{0,i}\|_{H^1(\Omega)}\leqslant C[\|p_\varepsilon-p_0\|_{L^\infty(\Omega)}+\eta(\varepsilon)+\tau(\varepsilon)+\kappa(\varepsilon)+\xi(\varepsilon)],\quad i=1,...,k.
$$
\end{cor}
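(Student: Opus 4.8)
The plan is to deduce the corollary from Theorem \ref{rate_equilibrium_points} together with a compactness argument ruling out ``spurious'' equilibria. First I would fix $\delta>0$ small enough that the $H^1$-balls $B_i=\{u\in H^1(\Omega):\|u-u_*^{0,i}\|_{H^1(\Omega)}\leqslant\delta\}$, $i=1,\dots,k$, are pairwise disjoint; this is possible since $\mathcal{E}_0$ is finite. Applying Theorem \ref{rate_equilibrium_points} to each $u_*^{0,i}$ yields, for every $\varepsilon$ sufficiently small, a unique equilibrium $u_*^{\varepsilon,i}\in B_i$ of \eqref{semilinear_problem} satisfying the estimate \eqref{rate_of_equilibrium}. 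Because the $B_i$ are disjoint, the points $u_*^{\varepsilon,1},\dots,u_*^{\varepsilon,k}$ are distinct, so $\mathcal{E}_\varepsilon\supseteq\{u_*^{\varepsilon,1},\dots,u_*^{\varepsilon,k}\}$ and the displayed inequality already holds for these $k$ points with $C=\max_i C_i$, where $C_i$ is the constant furnished by Theorem \ref{rate_equilibrium_points} for $u_*^{0,i}$.

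The remaining point — that $\mathcal{E}_\varepsilon$ has no further element — is where the real work lies. I would argue by contradiction: suppose there exist $\varepsilon_n\to0^+$ and $v_n\in\mathcal{E}_{\varepsilon_n}\setminus\bigcup_{i=1}^k B_i$. Each $v_n$ lies in the attractor $\mathcal{A}_{\varepsilon_n}$, so by Theorem \ref{uniform_bounds} the sequence $(v_n)$ is bounded in $H^1(\Omega)$ and in $L^\infty(\Omega)$; writing $v_n=A_{\varepsilon_n}^{-1}h^{\varepsilon_n}(v_n)$ and using that (after the cutoff) $h^{\varepsilon_n}$ is globally bounded from $H^1(\Omega)$ into $H^{-\beta}(\Omega)$, together with the uniform elliptic regularity/embeddings recorded in Section \ref{Sec2}, one upgrades this to relative compactness of $(v_n)$ in $H^1(\Omega)$. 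Passing to a subsequence, $v_n\to v$ in $H^1(\Omega)$; then, combining the resolvent convergence of Theorem \ref{resolvent_convergence} with the bound \eqref{estimate_h} (which gives $\|h^{\varepsilon_n}(v_n)-h^0(v)\|_{H^{-\beta}(\Omega)}\to0$), one may pass to the limit in $v_n=A_{\varepsilon_n}^{-1}h^{\varepsilon_n}(v_n)$ to obtain $v=A_0^{-1}h^0(v)$, that is $v\in\mathcal{E}_0$. But $\|v_n-u_*^{0,i}\|_{H^1(\Omega)}>\delta$ for all $i$ and $n$ forces $\|v-u_*^{0,i}\|_{H^1(\Omega)}\geqslant\delta$ for all $i$, contradicting $\mathcal{E}_0=\{u_*^{0,1},\dots,u_*^{0,k}\}$. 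Hence for $\varepsilon$ small one has $\mathcal{E}_\varepsilon=\{u_*^{\varepsilon,1},\dots,u_*^{\varepsilon,k}\}$ exactly.

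Finally I would record the persistence of hyperbolicity — needed so that $\mathcal{E}_\varepsilon$ is again a finite family of hyperbolic equilibria, as asserted in the discussion preceding Theorem \ref{rate_equilibrium_points}. Since $\sigma(A_0-(h^0)'(u_*^{0,i}))$ misses the imaginary axis, $(h^\varepsilon)'(u_*^{\varepsilon,i})\to(h^0)'(u_*^{0,i})$ (by continuous differentiability of $h^\varepsilon$ and the rate \eqref{rate_of_equilibrium}), and $A_\varepsilon^{-1}\to A_0^{-1}$, a standard spectral perturbation argument shows $\sigma(A_\varepsilon-(h^\varepsilon)'(u_*^{\varepsilon,i}))$ stays off the imaginary axis for $\varepsilon$ small. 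Assembling the three steps gives the corollary. The main obstacle is the exact-count step: one must ensure that all the compactness and limit-passage used to produce $v$ is \emph{uniform} in $\varepsilon$, which is precisely what the uniform embedding constants of Section \ref{Sec2} and the uniform $H^1$- and $L^\infty$-bounds on the attractors in Theorem \ref{uniform_bounds} are there to provide.
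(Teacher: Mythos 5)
Your proof is correct, and it supplies an argument that the paper only gestures at (the corollary is stated as ``a consequence of Theorem~\ref{rate_equilibrium_points}'' with no proof given). The first half — apply Theorem~\ref{rate_equilibrium_points} separately to each $u_*^{0,i}$, use disjointness of the $\delta$-balls to obtain $k$ distinct equilibria, and take $C=\max_i C_i$ — is exactly what yields the displayed rate estimate. Your second half is a genuine addition: Theorem~\ref{rate_equilibrium_points} gives only \emph{local} uniqueness of the equilibrium near each $u_*^{0,i}$, so the exact-count assertion $\mathcal{E}_\varepsilon=\{u_*^{\varepsilon,1},\dots,u_*^{\varepsilon,k}\}$ really does require the kind of compactness/limit-passage argument you describe (the paper hides this in the cited reference \cite[Chapter~14]{A.N.Carvalho2010}); your use of the uniform $H^1$- and $L^\infty$-bounds of Theorem~\ref{uniform_bounds}, the fixed-point identity $v_n=A_{\varepsilon_n}^{-1}h^{\varepsilon_n}(v_n)$, resolvent convergence (Theorem~\ref{resolvent_convergence}), and estimate~\eqref{estimate_h} to pass to the limit is the right machinery. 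One small technical caveat: in the bootstrap step you implicitly invoke a uniform-in-$\varepsilon$ embedding of $X_{\varepsilon}^{1-\beta}$ into a space compactly embedded in $H^1(\Omega)$, but Section~\ref{Sec2} records uniform embeddings $X_\varepsilon^\gamma\hookrightarrow H^{2\gamma}(\Omega)$ only for $\gamma\in[0,\tfrac12]$, while here $1-\beta\in(\tfrac12,\tfrac34]$; the cleaner route is to use the uniform compact embedding within the fractional power scale $X_\varepsilon^{1-\beta}\hookrightarrow X_\varepsilon^{1/2}=H^1(\Omega)$, which holds with constants independent of $\varepsilon$ since the resolvents $A_\varepsilon^{-1}$ are uniformly bounded and converge. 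With that adjustment the argument is airtight, and the closing remark on persistence of hyperbolicity correctly complements the discussion preceding Theorem~\ref{rate_equilibrium_points}.
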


\begin{rem}
The rate of convergence of the resolvent operators  $\|A_\varepsilon^{-1}-A_0^{-1}\|_{\mathcal{L}(H^{-\beta}(\Omega),H^1(\Omega))}$ is a natural quantity to estimate the continuity of eigenvalues and equilibrium points. As we can see in Theorem \ref{rate_equilibrium_points}, this convergence implies the continuity of equilibrium points according to \cite{Carvalho2006}.  
\end{rem}

\section{Convergence of linear and nonlinear semigroups}\label{Sec5}

In this section, we obtain the rate of convergence for families of linear and nonlinear semigroups. It is in this section that we see the arising of the exponent $l$ in the rate \eqref{final_est_1}. The main difficulty is the presence of singularities in time when we use the variation of the constants formula due to the immersions between the fractional power spaces and the base space $L^2$. Our solution to this problem is to make an interpolation using appropriate exponents to improve the exponential decay of the linear semigroup.

\begin{theo}\label{linear_semigroup_estimate12}
There exist constants $\alpha>0$ and $C>0$ independent of $\varepsilon$ such that
\begin{equation}\label{linear_semigroup_estimate}
\|e^{-A_\varepsilon t}-e^{-A_0 t}\|_{\mathcal{L}(H^{-\beta}(\Omega),H^1(\Omega))}\leqslant C e^{-\alpha t}[\|p_\varepsilon-p_0\|_{L^\infty(\Omega)}+\eta(\varepsilon)]^{2\theta}t^{-\frac{1+\beta}{2}-\theta},
\end{equation}
where $t>0$, $\varepsilon\in (0,\varepsilon_0]$ and $\theta \in (0,\frac{1}{2}]$.
\end{theo}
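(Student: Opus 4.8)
The plan is to squeeze the difference $e^{-A_\varepsilon t}-e^{-A_0 t}$ between two estimates of different natures and then interpolate. Put $\delta(\varepsilon)=\|p_\varepsilon-p_0\|_{L^\infty(\Omega)}+\eta(\varepsilon)+\tau(\varepsilon)$. The first, ``crude'' bound carries no small parameter but only the native smoothing singularity: from the analyticity of both semigroups, the uniform embedding $H^{-\beta}(\Omega)\hookrightarrow X_\varepsilon^{-\beta/2}$ together with $X_\varepsilon^{1/2}=H^1(\Omega)$ (with $\varepsilon$-independent constants, Lemma~\ref{equivalence_norm}), and the fact that $A_\varepsilon$ is uniformly positive (the choice of $\lambda$ forces $\lambda_\varepsilon^1\geqslant c>0$ with $c$ independent of $\varepsilon$), one gets
\[
\|e^{-A_\varepsilon t}-e^{-A_0 t}\|_{\mathcal{L}(H^{-\beta}(\Omega),H^1(\Omega))}\leqslant C\,e^{-\alpha t}\,t^{-\frac{1+\beta}{2}},\qquad t>0,
\]
with $C,\alpha>0$ independent of $\varepsilon$; this is the $\theta\to 0$ endpoint. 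The second, ``quantitative'' bound is linear in $\delta(\varepsilon)$ at the price of a worse power of $t$:
\[
\|e^{-A_\varepsilon t}-e^{-A_0 t}\|_{\mathcal{L}(H^{-\beta}(\Omega),H^1(\Omega))}\leqslant C\,\delta(\varepsilon)\,e^{-\alpha t}\,t^{-1-\frac{\beta}{2}},\qquad t>0.
\]
Granting both, the theorem follows pointwise in $t$ from $\min\{a,b\}\leqslant a^{1-2\theta}b^{2\theta}$ applied with $a$ the crude bound and $b$ the quantitative one: the exponent of $\delta(\varepsilon)$ becomes $2\theta$, the exponent of $t$ becomes $-(1-2\theta)\tfrac{1+\beta}{2}-2\theta\bigl(1+\tfrac{\beta}{2}\bigr)=-\tfrac{1+\beta}{2}-\theta$, and the exponential factor is a convex combination of the two, hence still of the form $e^{-\alpha t}$ with $\alpha>0$ uniform; this is precisely \eqref{linear_semigroup_estimate}.

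For the quantitative bound I would \emph{not} use the plain variation of constants identity $e^{-A_\varepsilon t}-e^{-A_0 t}=\int_0^t e^{-A_\varepsilon(t-s)}(A_\varepsilon-A_0)e^{-A_0 s}\,ds$: since the coefficients $p_\varepsilon$ converge merely in $L^\infty$, the difference $A_\varepsilon-A_0$ is controlled (via the bilinear forms and the trace theorem) only as a map $H^1(\Omega)\to H^{-1}(\Omega)$ of norm $O(\delta(\varepsilon))$, and then the linear semigroup has to close a full unit gap, producing a non-integrable $(t-s)^{-1}$ singularity at $s=t$; this is the obstruction alluded to in the section preamble. Instead I would use the Cauchy (Dunford) representation $e^{-A_\varepsilon t}-e^{-A_0 t}=\frac{1}{2\pi i}\int_{\Gamma}e^{\mu t}\bigl[(\mu+A_\varepsilon)^{-1}-(\mu+A_0)^{-1}\bigr]\,d\mu$, with $\Gamma=\partial\Sigma_{\nu,\phi}$ the boundary of the sectorial region of Theorem~\ref{resolvent_convergence}, chosen so that $\sup_{\mu\in\Gamma}\mathrm{Re}\,\mu\leqslant-\alpha<0$ (possible since $\sigma(-A_\varepsilon)$ lies uniformly to the left of $-c$). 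On $\Gamma$ one estimates the resolvent difference by combining the factorization \eqref{est_frac_resol} with the uniform sectorial/moment estimates for $A_\varepsilon(\mu+A_\varepsilon)^{-1}$ and $\mu(\mu+A_0)^{-1}$ and with Theorem~\ref{resolvent_convergence} (which gives $\|A_\varepsilon^{1/2}(A_\varepsilon^{-1}-A_0^{-1})\|_{\mathcal{L}(H^{-\beta}(\Omega),L^2(\Omega))}\leqslant C\delta(\varepsilon)$), obtaining $\|(\mu+A_\varepsilon)^{-1}-(\mu+A_0)^{-1}\|_{\mathcal{L}(H^{-\beta}(\Omega),H^1(\Omega))}\leqslant C\,\delta(\varepsilon)\,(1+|\mu|)^{\beta/2}$, the mild growth in $|\mu|$ being the price of routing the estimate through the fractional scale (the gap between $H^{-\beta}(\Omega)$ and $X_\varepsilon^{-\beta/2}$). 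Estimating $\int_\Gamma|e^{\mu t}|(1+|\mu|)^{\beta/2}\,|d\mu|$ by the standard rescaling $u=|\mu|\,t$ then yields $\leqslant C\,e^{-\alpha t}\,t^{-1-\beta/2}$, which is the quantitative bound above. (Any sharper accounting of the $\mu$-dependence would only decrease this singularity and hence still imply the claim, possibly after a harmless reduction of $\alpha$ to absorb a polynomial factor for large $t$.)

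The main obstacle is the quantitative bound: one must track the $\mu$-dependence on $\Gamma$ precisely enough that the contour integral converges while still extracting a full power of $\delta(\varepsilon)$, and every constant along the way — the sectorial constants for $A_\varepsilon$, the interpolation/embedding constants, and the constant in Theorem~\ref{resolvent_convergence} — must be uniform in $\varepsilon\in[0,\varepsilon_0]$; the uniform positivity of $A_\varepsilon$ and the uniform identification $X_\varepsilon^{1/2}=H^1(\Omega)$ from Lemma~\ref{equivalence_norm} are exactly what make this possible. The interpolation step itself is elementary, but it is the crucial device: it trades a barely-usable singularity $t^{-1-\beta/2}$ for $t^{-\frac{1+\beta}{2}-\theta}$ with $\theta>0$ arbitrarily small — of integrable order, hence fit for the Duhamel-type arguments for the nonlinear semigroup in the subsequent sections — at the sole cost of replacing $\delta(\varepsilon)$ by $\delta(\varepsilon)^{2\theta}$, which is where the loss $0<l<1$ in \eqref{final_est_1} originates.
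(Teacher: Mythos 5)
Your proposal is correct and follows the paper's own argument almost exactly: both proofs combine the crude $\varepsilon$-uniform smoothing bound $Ce^{-\alpha t}t^{-\frac{1+\beta}{2}}$ (the triangle inequality applied to \eqref{linear_semigroup_estimate1}) with a quantitative Dunford-integral bound over $\partial\Sigma_{\nu,\phi}$ that carries a full power of $\delta(\varepsilon)$, and then interpolate the two with weights $1-2\theta$ and $2\theta$. The only divergence is in the quantitative ingredient: the paper plugs \eqref{resolvent_estimate2} into the contour integral as if uniform in $\mu$ and records $C\delta(\varepsilon)e^{-\alpha t}t^{-1}$ (whose interpolation actually yields the slightly sharper exponent $-\tfrac{1+\beta}{2}-\theta(1-\beta)$), whereas you deliberately allow a $(1+|\mu|)^{\beta/2}$ growth in the resolvent-difference estimate and obtain $C\delta(\varepsilon)e^{-\alpha t}t^{-1-\beta/2}$, which interpolates to exactly $-\tfrac{1+\beta}{2}-\theta$; either variant gives \eqref{linear_semigroup_estimate}, and your closing remark about absorbing a harmless polynomial factor for large $t$ by shrinking $\alpha$ is precisely what reconciles the two.
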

\begin{proof}
For each $\varepsilon\in [0,\varepsilon_0]$, the operators $A_\varepsilon$ are self-adjoint and by Theorem \ref{resolvent_convergence},  $A_\varepsilon^{-1}$ converges to $A_0^{-1}$.  Thus we can define an appropriate closed rectifiable simple curve with trace in the resolvent set of $A_0$ such that, for $\alpha<\lambda_1^0$, the first eigenvalue of $A_0$, and we can choose  $\varepsilon$ sufficiently small (we still denote $\varepsilon\leqslant \varepsilon_0$) and a constant $C=C(\alpha)>0$ such that
\begin{equation}\label{linear_semigroup_estimate1}
\|e^{-A_\varepsilon t}\|_{\mathcal{L}(H^{-\beta}(\Omega),H^1(\Omega))}\leqslant Ce^{-\alpha t}t^{-\frac{1+\beta}{2}},
\end{equation} 
for $t>0$ and $\varepsilon\in [0,\varepsilon_0]$, see \cite{Lpires1} for details on how to get the above estimates. Therefore we obtain
\begin{equation}\label{linear_semigroup_estimate11}
\begin{split}
\|e^{-A_\varepsilon t}-e^{-A_0 t}\|_{\mathcal{L}(H^{-\beta}(\Omega),H^1(\Omega))} &\leqslant  \|e^{-A_\varepsilon t}\|_{\mathcal{L}(H^{-\beta}(\Omega),H^1(\Omega))}+\|e^{A_0 t}\|_{\mathcal{L}(H^{-\beta}(\Omega),H^1(\Omega))} \\  
&\leqslant Ce^{-\alpha t}t^{-\frac{1+\beta}{2}}.
\end{split}
\end{equation}
On the other hand, the linear semigroup is given by
$$
e^{-A_\varepsilon t}=\frac{1}{2\pi i} \int_\omega e^{\mu t}(\mu+A_\varepsilon)^{-1}\, d\mu,
$$
where $\varepsilon\in [0,\varepsilon_0]$ and $\omega$ is the boundary of sector $\Sigma_{\nu,\phi}$ as in Theorem \ref{resolvent_convergence}, oriented in such a way that the imaginary part of $\mu$  increases as $\mu$ runs in $\omega$. Hence
$$
 \|e^{-A_\varepsilon t}-e^{-A_0 t}\|_{\mathcal{L}(H^{-\beta}(\Omega),H^1(\Omega))} \leqslant  \frac{1}{2\pi i} \int_\omega e^{\mu t}|(\mu+A_\varepsilon)^{-1}-(\mu+A_0)^{-1}|\, |d\mu|.
 $$
Taking a appropriate parametrization of $\omega$ as \cite[Lemma 3.9]{Santamaria2014}, we obtain by \eqref{resolvent_estimate2} that
\begin{equation}\label{linear_semigroup_estimate2}
\|e^{-A_\varepsilon t}-e^{-A_0 t}\|_{\mathcal{L}(H^{-\beta}(\Omega),H^1(\Omega))}\leqslant Ce^{-\alpha t}[\|p_\varepsilon-p_0\|_{L^\infty(\Omega)}+\eta(\varepsilon)]t^{-1}.
\end{equation}
Notice that the term $t^{-1}$ causes a singularity at $t=0$. Then we interpolate \eqref{linear_semigroup_estimate11} and \eqref{linear_semigroup_estimate2} with $1-2\theta$ and $2\theta$, respectively, to obtain \eqref{linear_semigroup_estimate}. 
\end{proof}

On the convergence of nonlinear semigroups, we have:

\begin{theo}\label{Theorem_semigroup_nonlinear_estimate}
Let $u^\varepsilon,u^0\in H^1(\Omega)$ and $\theta \in (0,\frac{1}{2}]$, then there exist constants $C>0$ and $L> 0$ such that 
\begin{equation}\label{semigroup_nonlinear_estimate}
\begin{split}
&\|T_\varepsilon(t)u^\varepsilon-T_0(t)u^0\|_{H^1(\Omega)} \\ 
&\leqslant Ce^{Lt}t^{-\frac{1+\beta}{2}-\theta}[\|u^\varepsilon-u^0\|_{H^1(\Omega)}+[\|p_\varepsilon-p_0\|_{L^\infty(\Omega)}+\eta(\varepsilon)]^{2\theta}+\tau(\varepsilon)+\kappa(\varepsilon)+\xi(\varepsilon)],
\end{split}
\end{equation}
for all $t\geqslant 0$ and $\varepsilon\in (0,\varepsilon_0]$.
\end{theo}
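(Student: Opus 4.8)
The plan is to use the variation of constants formula \eqref{semigroupononlinear} for both $T_\varepsilon$ and $T_0$, subtract, and estimate the resulting difference term by term. Writing
\[
T_\varepsilon(t)u^\varepsilon-T_0(t)u^0 = (e^{-A_\varepsilon t}u^\varepsilon-e^{-A_0 t}u^0) + \int_0^t \big[e^{-A_\varepsilon(t-s)}h^\varepsilon(T_\varepsilon(s)u^\varepsilon) - e^{-A_0(t-s)}h^0(T_0(s)u^0)\big]\,ds,
\]
I would split each of these into a ``semigroup difference'' piece and a ``nonlinearity/argument difference'' piece. For the linear part, $e^{-A_\varepsilon t}u^\varepsilon-e^{-A_0 t}u^0 = (e^{-A_\varepsilon t}-e^{-A_0 t})u^\varepsilon + e^{-A_0 t}(u^\varepsilon-u^0)$; the first term is controlled by Theorem \ref{linear_semigroup_estimate12} and the second by the smoothing estimate \eqref{linear_semigroup_estimate1}, producing the $t^{-\frac{1+\beta}{2}-\theta}$ singularity and the $[\,\cdot\,]^{2\theta}$ factor. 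For the integral term I would insert and subtract $e^{-A_0(t-s)}h^\varepsilon(T_\varepsilon(s)u^\varepsilon)$, giving a contribution bounded via Theorem \ref{linear_semigroup_estimate12} (times the uniform bound on $h^\varepsilon$ from the cutoff) and a contribution $\int_0^t e^{-A_0(t-s)}\big[h^\varepsilon(T_\varepsilon(s)u^\varepsilon)-h^0(T_0(s)u^0)\big]\,ds$, which by \eqref{estimate_h} is bounded by $C\int_0^t (t-s)^{-\frac{1+\beta}{2}}\big(\|T_\varepsilon(s)u^\varepsilon-T_0(s)u^0\|_{H^1(\Omega)}+\kappa(\varepsilon)+\xi(\varepsilon)\big)\,ds$.

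Next I would set $\psi(t) = \|T_\varepsilon(t)u^\varepsilon-T_0(t)u^0\|_{H^1(\Omega)}$ and collect the estimates into an inequality of the form
\[
\psi(t) \leqslant C t^{-\frac{1+\beta}{2}-\theta}\big[\|u^\varepsilon-u^0\|_{H^1(\Omega)} + (\|p_\varepsilon-p_0\|_{L^\infty(\Omega)}+\eta(\varepsilon)+\tau(\varepsilon))^{2\theta} + \kappa(\varepsilon)+\xi(\varepsilon)\big] + C\int_0^t (t-s)^{-\frac{1+\beta}{2}}\psi(s)\,ds,
\]
where I have absorbed the $\kappa(\varepsilon)+\xi(\varepsilon)$ from the integral into the first bracket (using that $\int_0^t(t-s)^{-\frac{1+\beta}{2}}\,ds$ is finite since $\frac{1+\beta}{2}<1$, as $\beta\leqslant\frac{1}{2}$). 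Then a singular Gronwall inequality (Henry's lemma, as in \cite{Henry1980}) applied to $\psi$ yields $\psi(t)\leqslant C e^{Lt} t^{-\frac{1+\beta}{2}-\theta}[\cdots]$ for all $t>0$, which is \eqref{semigroup_nonlinear_estimate}; for $t=0$ the statement reduces to the trivial bound on the initial data (interpreting the singular factor appropriately), so no separate argument is needed there beyond noting the formula holds for $t>0$ and extends by the convention on the statement.

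The main obstacle is bookkeeping the singular exponents so that every integral converges and the final power matches the claimed $t^{-\frac{1+\beta}{2}-\theta}$: one must check that after applying the singular Gronwall iteration the weakest-integrable singularity dominates, i.e. that $\frac{1+\beta}{2}+\theta<1$ so that the convolution $\int_0^t (t-s)^{-\frac{1+\beta}{2}} s^{-\frac{1+\beta}{2}-\theta}\,ds$ is again of order $t^{-\frac{1+\beta}{2}-\theta}$ up to a Beta-function constant — this is where the restriction $\theta\in(0,\tfrac12]$ together with $-\beta\in[-\tfrac12,-\tfrac14)$ is used. A secondary point requiring care is that the cutoff nonlinearities are globally Lipschitz with a uniform constant and globally bounded, so the term $\|h^\varepsilon(T_\varepsilon(s)u^\varepsilon)\|_{H^{-\beta}(\Omega)}$ appearing after inserting $e^{-A_0(t-s)}$ is uniformly bounded in $\varepsilon$ and $s$; this is what makes the ``semigroup difference'' contribution in the integral term integrable in $s$ near $s=t$ after pairing Theorem \ref{linear_semigroup_estimate12}'s factor $(t-s)^{-\frac{1+\beta}{2}-\theta}$ with the exponential decay (one absorbs $e^{-\alpha(t-s)}\leqslant 1$ and checks $\frac{1+\beta}{2}+\theta<1$ again).
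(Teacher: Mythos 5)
Your route is the same as the paper's: the variation of constants formula \eqref{semigroupononlinear}, the smoothing estimate \eqref{linear_semigroup_estimate1} and Theorem \ref{linear_semigroup_estimate12} for the linear parts, the Lipschitz bound \eqref{estimate_h} for the nonlinearity difference, and a singular Gronwall inequality (the only difference, which cross term you insert in the Duhamel integral, is immaterial). The gap is in the very step you single out as the main obstacle. You claim that $\frac{1+\beta}{2}+\theta<1$ follows from $\theta\in(0,\frac12]$ together with $-\beta\in[-\frac12,-\frac14)$; it does not. Since $\beta\in(\frac14,\frac12]$, one has $\frac{1+\beta}{2}\in(\frac58,\frac34]$, so $\frac{1+\beta}{2}+\theta<1$ forces $\theta<\frac{1-\beta}{2}\leqslant\frac38$ (and $\theta<\frac14$ when $\beta$ is close to $\frac12$). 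Hence for $\theta\in[\frac{1-\beta}{2},\frac12]$ — in particular for $\theta=\frac12$, exactly the value invoked later in Theorem \ref{main_result_final_rate} — the three places where you use this inequality break down: the integral near $s=t$ of $(t-s)^{-\frac{1+\beta}{2}-\theta}$ against the (uniformly bounded) nonlinearity diverges, the convolution $\int_0^t(t-s)^{-\frac{1+\beta}{2}}s^{-\frac{1+\beta}{2}-\theta}\,ds$ is no longer a Beta-function bound of the claimed order, and the forcing term $t^{-\frac{1+\beta}{2}-\theta}$ is not locally integrable, so the singular Gronwall lemma cannot be quoted in the form you use.

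The repair is to work with the exact interpolation exponent produced in the proof of Theorem \ref{linear_semigroup_estimate12}, namely $\frac{1+\beta}{2}(1-2\theta)+2\theta=\frac{1+\beta}{2}+\theta(1-\beta)$, rather than the rounded exponent $\frac{1+\beta}{2}+\theta$ of its statement: the sharp exponent is $<1$ precisely when $\theta<\frac12$, and then all of your integrals converge and the Gronwall step goes through for every $\theta\in(0,\frac12)$ (the paper's own write-up is terse on this point and does not address it explicitly either). The endpoint $\theta=\frac12$ is genuinely borderline for this method; there the semigroup-difference term inside the integral should instead be handled by splitting the kernel as $\min\{(t-s)^{-\frac{1+\beta}{2}},\,\delta(\varepsilon)(t-s)^{-1}\}$, which yields a bound of order $\delta(\varepsilon)|\log\delta(\varepsilon)|$ — the device used in Section \ref{Sec8} — or one restricts the statement to $\theta<\frac12$. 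So the architecture of your argument is the right one, but as written it does not cover the full range of $\theta$ claimed in the theorem.
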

\begin{proof}
Since the nonlinear semigroup is given by \eqref{semigroupononlinear}, we have
\[ \begin{split}
\|T_\varepsilon(t)u^\varepsilon-T_0(t)u^0\|_{H^1(\Omega)} 
&\leqslant \|e^{-A_\varepsilon t}(u^\varepsilon-u^0)\|_{H^1(\Omega)} +\|(e^{-A_\varepsilon t}-e^{-A_0 t})u^0\|_{H^1(\Omega)} \\
&+\int_0^{t} \|e^{-A_\varepsilon(t-s)}(h^\varepsilon(T_\varepsilon(s)u^\varepsilon) - h^0(T_0(s)u^0)\|_{H^1(\Omega)}\,ds\\
&+\int_0^{t} \|(e^{-A_\varepsilon(t-s)}-e^{-A_0(t-s)})h^0(T_0(s)u^0)\|_{H^1(\Omega)}\,ds.
\end{split} \]
Consequently from \eqref{estimate_h} and \eqref{linear_semigroup_estimate} we have
\[ \begin{split}
\|T_\varepsilon(t)u^\varepsilon-& T_0(t)u^0\|_{H^1(\Omega)}  \leqslant C[\|u^\varepsilon-u^0\|_{H^1(\Omega)}+(\|p_\varepsilon-p_0\|_{L^\infty(\Omega)}+\eta(\varepsilon))^{2\theta}]t^{-\frac{1}{2}-\theta}e^{-\alpha t}\\
&+C\int_0^{t} (t-s)^{-\frac{1+\beta}{2}}e^{-\alpha(t-s)} \|T_\varepsilon(s)u^\varepsilon - T_0(s)u^0\|_{H^1(\Omega)}\,ds+C[\tau(\varepsilon)+\kappa(\varepsilon)+\xi(\varepsilon)],
\end{split} \]
and using the singular  Gronwall's inequality (see \cite[Chapter 6]{A.N.Carvalho2010}) we obtain \eqref{semigroup_nonlinear_estimate}.
\end{proof}

\begin{rem}
The rate of convergence of eigenvalues, eigenfunctions, and equilibrium points that we have seen in Theorems \ref{resolvent_convergence} and \ref{rate_equilibrium_points}, is better than the rate of convergence of the nonlinear semigroup. In fact, by Theorem \ref{Theorem_semigroup_nonlinear_estimate}, we have
$$
\sup_{u\in \cup_{\varepsilon}\mathcal{A}_\varepsilon} \sup_{t\in[0,1]}   \|T_\varepsilon(t)u-T_0(t)u\|_{H^1(\Omega)} 
\leqslant C([\|p_\varepsilon-p_0\|_{L^\infty(\Omega)}+\eta(\varepsilon)]^{2\theta}+\tau(\varepsilon)+\kappa(\varepsilon)+\xi(\varepsilon)),
$$
for some constant $C$ independent of $\varepsilon$.

Notice that, the infinite dimension gives a time singularity due to spectral behavior of $e^{-A_\varepsilon (\cdot)}$, that is, the appearance of $t^{-1}$ in \eqref{linear_semigroup_estimate2}. But, the singular Gronwall's inequality does not allow the term $t^{-1}$.  This fact affects the above estimate for the convergence of the nonlinear semigroup and consequently will reflect in the rate of convergence of global attractors of problems \eqref{semilinear_problem}. The \ref{SecA} shows how we can transfer the nonlinear semigroup estimate for the global attractor's convergence.

\end{rem}

\section{Convergence of local unstable manifolds}\label{Sec6}

In this section, we use the general theory of invariant manifolds to estimate the convergence of unstable manifolds around a hyperbolic equilibrium point.  We conclude the exponential attraction of attractors $\mathcal{A}_\varepsilon$ concerning semigroup $T_\varepsilon(\cdot)$ in the phase space $H^1(\Omega)$ and we show how the nonlinear boundary conditions affect the estimates obtained in \cite{Arrieta}.

Let $u^\varepsilon_{*}\in\mathcal{E}_\varepsilon$ be an equilibrium for \eqref{semilinear_problem} and consider its linearization  
\begin{equation}\label{pusogalerkin}
\dfrac{dw^\varepsilon}{dt}+\overline{A}_\varepsilon w^\varepsilon=h^\varepsilon(w^\varepsilon+u_{*}^\varepsilon)-h^\varepsilon(u_{*}^\varepsilon)-(h^\varepsilon)'(u_{*}^\varepsilon)w^\varepsilon,
\end{equation}
where $w^\varepsilon=u^\varepsilon-u_{*}^\varepsilon$ and $\overline{A}_\varepsilon=A_\varepsilon-(h^\varepsilon)'(u_{*}^\varepsilon)$. Let $\bar\gamma$ be a smooth, closed, simple, rectifiable curve in the resolvent set of $-\overline{A}_\varepsilon$, oriented counterclockwise evolving the first $m$ positive eigenvalues in \eqref{spectro_A}. Thus, we can choose an $\varepsilon>0$ sufficiently small such that
$\{\bar\gamma\}\subset \rho(-\overline{A}_\varepsilon)$ and then, we define $\overline{Q}_\varepsilon^{+}$ by
$$
\overline{Q}_\varepsilon^{+}=\frac{1}{2\pi i} \int_{\bar\gamma} (\mu+\overline{A}_\varepsilon)^{-1} d\mu.
$$
The operator $\overline{A}_\varepsilon$ is self-adjoint and there is a $\alpha>0$ and $M\geqslant 1$ such that, for all $0\leqslant\varepsilon\leqslant\varepsilon_0$,
\begin{equation*}
\begin{split}
&\|e^{-\overline{A}_\varepsilon t}\overline{Q}_\varepsilon^{+}\|_{\mathcal{L}(H^{-\beta}(\Omega),{H^1(\Omega)})}    \leqslant Me^{\alpha t},\ t\leqslant 0\qquad \hbox{and}\\
&\|e^{-\overline{A}_\varepsilon t}(I-\overline{Q}_\varepsilon^{+})\|_{\mathcal{L}(H^{-\beta}(\Omega),{H^1(\Omega)})}\leqslant Mt^{-\frac{1+\beta}{2}}e^{-\alpha t},\ t>0.
\end{split}
\end{equation*}
Moreover, we can prove all convergence results of Section \ref{Sec3} for $\overline{A}_\varepsilon$ in place of $A_\varepsilon$, for example
$$
\|e^{-\overline{A}_\varepsilon t}-e^{-\overline{A}_0 t}\|_{\mathcal{L}(H^{-\beta}(\Omega),H^1(\Omega))}\leqslant C e^{-\alpha t}[\|p_\varepsilon-p_0\|_{L^\infty(\Omega)}+\eta(\varepsilon)]^{2\theta}t^{-\frac{1+\beta}{2}-\theta},\quad t>0.
$$

Using the decomposition ${H^1(\Omega)}=\overline{Q}_\varepsilon^{+}({H^1(\Omega)})\, +\, (I-\overline{Q}_\varepsilon^{+})({H^1(\Omega)})$, the solution $w^\varepsilon$ of \eqref{pusogalerkin} can be decomposed as $w^\varepsilon=v^\varepsilon+z^\varepsilon$, with $v^\varepsilon=\overline{Q}_\varepsilon^{+}w^\varepsilon$ and  $z^\varepsilon=(I-\overline{Q}_\varepsilon^{+})w^\varepsilon$. Defining operators $B_\varepsilon:=\overline{A}_\varepsilon\overline{Q}_\varepsilon^{+}$ and  $\widetilde{A}_\varepsilon:=\overline{A}_\varepsilon(I-\overline{Q}_\varepsilon^{+})$, we rewrite \eqref{pusogalerkin} as
\begin{equation}\label{sist_acoplado}
\begin{cases}
\dfrac{dv^\varepsilon}{dt}+B_\varepsilon v^\varepsilon=H_\varepsilon(v^\varepsilon,z^\varepsilon)\\
\dfrac{dz^\varepsilon}{dt}+\widetilde{A}_\varepsilon z^\varepsilon=G_\varepsilon(v^\varepsilon,z^\varepsilon),
\end{cases}
\end{equation}
where
\[
H_\varepsilon(v^\varepsilon,z^\varepsilon):=\overline{Q}_\varepsilon^{+}[h^\varepsilon(v^\varepsilon+z^\varepsilon+u_{*}^\varepsilon)-h^\varepsilon(u_{*}^\varepsilon)-(h^\varepsilon)'(u_{*}^\varepsilon)(v^\varepsilon+z^\varepsilon)]
\]
and
\[
G_\varepsilon(v^\varepsilon,z^\varepsilon):=(I-\overline{Q}_\varepsilon^{+})[h^\varepsilon(v^\varepsilon+z^\varepsilon+u_{*}^\varepsilon)-h^\varepsilon(u_{*}^\varepsilon)-(h^\varepsilon)'(u_{*}^\varepsilon)(v^\varepsilon+z^\varepsilon)].
\]
The functions $H_\varepsilon$ and $G_\varepsilon$ are continuously differentiable with $H_\varepsilon(0,0)=0=G_\varepsilon(0,0)\in H^{-\beta}(\Omega)$ and $H_\varepsilon'(0,0)=0=G_\varepsilon'(0,0)\in \mathcal{L}(H^1(\Omega),H^{-\beta}(\Omega))$. Hence, given  $\rho_o>0$, there are $0<\overline{\varepsilon}=\overline{\varepsilon}_{\rho_o}\leqslant \varepsilon_0$ and $\delta=\delta_{\rho_o}>0$ such that if $\|v\|_{\overline{Q}_\varepsilon^{+}{H^1(\Omega)}}+\|z\|_{{H^1(\Omega)}}<\delta$ and $\varepsilon\leqslant\varepsilon_0$, then
\begin{equation}\label{funcoes_hg}
\|H_\varepsilon(v,z)\|_{\overline{Q}_\varepsilon^{+}(H^1(\Omega))}\leqslant\rho_o\  \  \hbox{and}\  \  \|G_\varepsilon(v,z)\|_{{H^{-\beta}(\Omega)}}\leqslant\rho_o;
\end{equation}
\begin{equation}\label{funcoes_hg_1}
\|H_\varepsilon(v,z)-H_\varepsilon(\overline{v},\overline{z})\|_{\overline{Q}_\varepsilon^{+}(H^1(\Omega))}\leqslant\rho_o(\|v-\overline{v}\|_{\overline{Q}_\varepsilon^{+}({H^1(\Omega)})}+\|z-\overline{z}\|_{{H^1(\Omega)}});
\end{equation}
\begin{equation}\label{funcoes_hg_2}
\|G_\varepsilon(v,z)-G_\varepsilon(\overline{v},\overline{z})\|_{H^{-\beta}(\Omega)}\leqslant\rho_o(\|v-\overline{v}\|_{\overline{Q}_\varepsilon^{+}({H^1(\Omega)})}+\|z-\overline{z}\|_{{H^1(\Omega)}}).
\end{equation}

\begin{theo}\label{cont-variedades}
Given $D>0$ and $\Delta>0$, $\vartheta\in (0,1)$ and $\bar\rho>0$  such that
\begin{equation}\label{rho-cond}
\begin{split}
&\rho_o M \alpha^{-\frac12} \Gamma\left( \frac12\right)\leqslant D, \quad \rho_o M \, \Gamma\left(\frac{1}{2}\right)\frac{M(1+\Delta)}{\left(2\alpha-\rho M(1+\Delta)\right)^{\frac12}}
\leqslant \Delta,\\
& \qquad
\rho_o M\alpha^{-\frac{1}{2}}\Gamma\left(\frac{1}{2}\right)\left[1+\frac{\rho_o M(1+\Delta)\alpha^{-\frac12}}{\left(2\alpha-\rho_o M(1+\Delta)\right)^{\frac12}}\right]\leqslant \vartheta
\end{split}
\end{equation}
are satisfied for all $\rho_o \in (0,\bar\rho)$. Assume that $H_\varepsilon$ and $G_\varepsilon$ satisfies \eqref{funcoes_hg}-\eqref{funcoes_hg_2}, with $0<\rho_o\leqslant\bar\rho$ for all $(v,z)\in\overline{Q}_\varepsilon^{+}{H^1(\Omega)}\times(I-\overline{Q}_\varepsilon^{+}){H^1(\Omega)}$. Then, there exists $s^{*}_\varepsilon:\overline{Q}_\varepsilon^{+}{H^1(\Omega)}\to(I-\overline{Q}_\varepsilon^{+}){H^1(\Omega)}$ such that the unstable manifold of $u_{*}^\varepsilon$ is given as the graph of the map $s^{*}_\varepsilon$,
$$
W^u(u_{*}^\varepsilon)=\{(v,z)\in {H^1(\Omega)};\ z=s^{*}_\varepsilon(v),\ v\in\overline{Q}_\varepsilon^{+}{H^1(\Omega)}\}.
$$
The map $s^*_\varepsilon$ satisfy
$$
|\!|\!|s^{*}_\varepsilon|\!|\!|:=\sup_{v\in\overline{Q}_\varepsilon^{+}({H^1(\Omega)})}\|s^{*}_\varepsilon(v)\|_{{H^1(\Omega)}}\leqslant D,\ \ \
\|s^{*}_\varepsilon(v)-s^{*}_\varepsilon(\widetilde{v})\|_{{H^1(\Omega)}}\leqslant\Delta\|v-\widetilde{v}\|_{\overline{Q}_\varepsilon^{+}({H^1(\Omega)})},
$$
and there is  $C>0$ independent of $\varepsilon$ and $0<\theta<1$ such that
\begin{equation}\label{convergence_unstable_manifold}
|\!|\!|s_\varepsilon^{*}-s_0^{*}|\!|\!|\leqslant C[(\|p_\varepsilon-p_0\|_{L^\infty(\Omega)}+\eta(\varepsilon))^{\theta}+\tau(\varepsilon)+\kappa(\varepsilon)+\xi(\varepsilon)].
\end{equation}

Furthermore, given $0<\gamma<\alpha$, there is  $0<\rho_1\leqslant \bar\rho$ and $C>0$, independent of $\varepsilon$, such that, for any solution $[t_0,\infty)\ni t \mapsto (v^\varepsilon(t),z^\varepsilon(t))\in H^1(\Omega)$ of \eqref{sist_acoplado},
\begin{equation}\label{Exp-att}
\|z^\varepsilon(t)-s_\varepsilon^{*}(v^\varepsilon(t))\|_{{H^1(\Omega)}}\leqslant Ce^{-\gamma(t-t_0)}\|z^\varepsilon(t_0)-s_\varepsilon^{*}(v^\varepsilon(t_0))\|_{{H^1(\Omega)}},\ {\rm for  \ all}\ t\geqslant t_0.
\end{equation}
\end{theo}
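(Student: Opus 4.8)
The plan is to construct the map $s^*_\varepsilon$ as a fixed point of an integral operator built from the variation-of-constants formula for the coupled system \eqref{sist_acoplado}, and then estimate the difference of two such fixed points (for $\varepsilon$ and for $0$) by a standard "uniform contraction with parameter" argument, feeding in the resolvent/semigroup rates from Sections \ref{Sec3} and \ref{Sec5}. Concretely, for a bounded Lipschitz candidate $s\colon \overline{Q}^+_\varepsilon H^1(\Omega)\to (I-\overline{Q}^+_\varepsilon)H^1(\Omega)$ and a backward orbit $v^\varepsilon(\cdot)$ on the unstable subspace, one writes
\[
z^\varepsilon(t)=\int_{-\infty}^{t} e^{-\widetilde{A}_\varepsilon(t-\sigma)}G_\varepsilon\big(v^\varepsilon(\sigma),s(v^\varepsilon(\sigma))\big)\,d\sigma,
\]
and $s^*_\varepsilon(v)=z^\varepsilon(0)$ where $v^\varepsilon(0)=v$. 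The estimates \eqref{funcoes_hg}--\eqref{funcoes_hg_2} together with the dichotomy estimates for $e^{-B_\varepsilon t}\overline{Q}^+_\varepsilon$ and $e^{-\widetilde{A}_\varepsilon t}(I-\overline{Q}^+_\varepsilon)$ show (via the Gamma-function bounds encoded in \eqref{rho-cond}) that this operator is a contraction on the space of maps with $|\!|\!|s|\!|\!|\leqslant D$ and Lipschitz constant $\leqslant \Delta$, uniformly in $\varepsilon$; this gives existence of $s^*_\varepsilon$ and the bounds $|\!|\!|s^*_\varepsilon|\!|\!|\leqslant D$, $\mathrm{Lip}(s^*_\varepsilon)\leqslant\Delta$. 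This part is the classical Lyapunov--Perron construction, so I would only sketch it and cite \cite[Chapter 14]{A.N.Carvalho2010} or \cite{Henry1980}.

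\emph{For the convergence rate \eqref{convergence_unstable_manifold},} I would subtract the fixed-point identities for $s^*_\varepsilon$ and $s^*_0$ along a common base orbit, split the difference into (i) the difference of the propagators $e^{-\widetilde A_\varepsilon(t-\sigma)}-e^{-\widetilde A_0(t-\sigma)}$ acting on $G_0$, (ii) the difference of the cut-off semigroups $\overline Q^+_\varepsilon$ versus $\overline Q^+_0$, (iii) the difference $G_\varepsilon-G_0$ coming from the variation of $h^\varepsilon$, the spectral projection, and the equilibrium $u^\varepsilon_*$, and (iv) the Lipschitz-in-$s$ term $G_\varepsilon(v,s^*_\varepsilon(v))-G_\varepsilon(v,s^*_0(v))$ which absorbs a factor $\rho_o$ and is moved to the left-hand side via the smallness condition \eqref{rho-cond} (this is where $\vartheta<1$ is used to close the estimate). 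Term (i) contributes $(\|p_\varepsilon-p_0\|_{L^\infty(\Omega)}+\eta(\varepsilon)+\tau(\varepsilon))^{2\theta}$ by the analogue of Theorem \ref{linear_semigroup_estimate12} for $\overline A_\varepsilon$ stated just above the theorem; term (ii) contributes $\|Q_\varepsilon-Q_0\|\lesssim \|p_\varepsilon-p_0\|_{L^\infty(\Omega)}+\eta(\varepsilon)+\tau(\varepsilon)$ by Theorem \ref{theo_spectral_projection}; term (iii) contributes $\kappa(\varepsilon)+\xi(\varepsilon)$ plus $\|u^\varepsilon_*-u^0_*\|_{H^1(\Omega)}$, which by Theorem \ref{rate_equilibrium_points} is dominated by the full bracket in \eqref{final_est_1}. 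Collecting, and writing $\theta$ for the worst (smallest) surviving exponent among $2\theta$ and $1$, one gets exactly the bound \eqref{convergence_unstable_manifold} with some $0<\theta<1$; the singular factors $(t-\sigma)^{-\frac{1+\beta}{2}-\theta}$ are integrable against $e^{-\alpha(t-\sigma)}$ over $(-\infty,0)$ since $\frac{1+\beta}{2}+\theta<1$, which is what makes the improper integrals converge.

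\emph{For the exponential attraction \eqref{Exp-att},} I would set $\zeta^\varepsilon(t):=z^\varepsilon(t)-s^*_\varepsilon(v^\varepsilon(t))$ for an arbitrary solution of \eqref{sist_acoplado} and derive, using the two equations and the defining identity of $s^*_\varepsilon$, an integral inequality of the form
\[
\|\zeta^\varepsilon(t)\|_{H^1(\Omega)}\leqslant M e^{-\alpha(t-t_0)}\|\zeta^\varepsilon(t_0)\|_{H^1(\Omega)}+M\rho_o\big(1+\mathrm{Lip}(s^*_\varepsilon)\big)\int_{t_0}^{t}(t-\sigma)^{-\frac{1+\beta}{2}}e^{-\alpha(t-\sigma)}\|\zeta^\varepsilon(\sigma)\|_{H^1(\Omega)}\,d\sigma;
\]
choosing $\rho_o=\rho_1$ small enough (depending on $\gamma<\alpha$) and applying the singular Gronwall inequality \cite[Chapter 6]{A.N.Carvalho2010} upgrades this to $\|\zeta^\varepsilon(t)\|_{H^1(\Omega)}\leqslant C e^{-\gamma(t-t_0)}\|\zeta^\varepsilon(t_0)\|_{H^1(\Omega)}$, with $C$ independent of $\varepsilon$ because all constants $M,\alpha,\rho_1,\Delta$ are uniform. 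The main obstacle, as in Section \ref{Sec5}, is the interplay between the parabolic singularity $(t-\sigma)^{-\frac{1+\beta}{2}-\theta}$ at $\sigma=t$ and the need for an improper integral on the whole half-line $(-\infty,0]$ in the Lyapunov--Perron operator: one must keep $\theta$ small enough that $\frac{1+\beta}{2}+\theta<1$ while still inheriting a genuine power of the perturbation parameter, and one must verify that the smallness conditions \eqref{rho-cond} can be met simultaneously with this constraint on $\theta$ — this is precisely the source of the exponent loss $0<\theta<1$ in \eqref{convergence_unstable_manifold}, mirroring the loss $0<l<1$ announced in \eqref{final_est_1}.
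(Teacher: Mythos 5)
Your proposal is correct and follows essentially the same route as the paper: the Lyapunov--Perron fixed-point construction on the space of maps with $|\!|\!|s|\!|\!|\leqslant D$ and Lipschitz constant $\leqslant\Delta$, the splitting of the difference of the fixed-point identities into propagator, projection/nonlinearity, and Lipschitz-in-$s$ terms estimated via Theorems \ref{linear_semigroup_estimate12}, \ref{theo_spectral_projection}, \ref{rate_equilibrium_points} and closed by the (singular) Gronwall inequality, and the same Gronwall argument for \eqref{Exp-att}. No genuine gap relative to the paper's own sketch.
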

\begin{proof}
The proof is standard and it has been made in several papers, see for example \cite{Arrieta,A.N.Carvalho2010,Henry1980}. In \cite{Arrieta} it has shown how to estimate the convergence \eqref{convergence_unstable_manifold}. Once we have established the spaces and estimates necessary to the proof in the previous sections,  we outline what has been done in these works.

Consider the set
$$
\Sigma_\varepsilon\!=\!\left\{s:\overline{Q}_\varepsilon^{+}({H^1(\Omega)})\!\to\!(I-Q_\varepsilon^{+})({H^1(\Omega)}): |\!|\!|s|\!|\!|\leqslant D, \, \|s(v)\!-\!s(\widetilde{v})\|_{{H^1(\Omega)}}\!\leqslant \!\Delta\|v-\widetilde{v}\|_{\overline{Q}_\varepsilon^{+}{H^1(\Omega)}}\right\}.
$$
It is not difficult to see that $(\Sigma_\varepsilon,|\!|\!|\cdot|\!|\!|)$ is a complete metric space.

Given $s_\varepsilon \in \Sigma_\varepsilon$ and $\Theta\in \overline{Q}_\varepsilon (H^1(\Omega))$, denote by $v^\varepsilon(t)=\psi(t,\tau,\Theta,s_\varepsilon)$ the solution of
\begin{equation*}
\begin{cases}
\dfrac{dv^\varepsilon}{dt}(t)+B_\varepsilon v^\varepsilon(t)=H_\varepsilon(v^\varepsilon(t), s_\varepsilon^{*}(v^\varepsilon(t))), \ t<\bar\tau\\
v^\varepsilon(\bar\tau)=\Theta.
\end{cases}
\end{equation*}

Define the map $\Psi_\varepsilon:\Sigma_\varepsilon\to \Sigma_\varepsilon$ by
\[
\Psi_\varepsilon(s_\varepsilon)\Theta=\int_{-\infty}^{\bar{\tau}} e^{-\widetilde{A}_\varepsilon (\bar{\tau}-s)}    G_\varepsilon(v^\varepsilon(s),s_\varepsilon(v^\varepsilon(s)))ds.
\]

Notice that, from \eqref{funcoes_hg} and \eqref{rho-cond}, we have $\|\Psi(s_\varepsilon)(\Theta)\|_{{H^1(\Omega)}}\leqslant D$.

Now, if $\Theta$, $\widetilde{\Theta}\in \overline{Q}_\varepsilon(H^1(\Omega))$, $s_\varepsilon, \widetilde{s}_\varepsilon\in\Sigma_\varepsilon$, $v^\varepsilon(t)=\psi(t,\bar\tau,\Theta,s_\varepsilon)$ and $\tilde{v}^\varepsilon(t)=\psi(t,\bar\tau,\Theta,\tilde{s}_\varepsilon)$, it is easy to see that
$$
\phi(t)
\leqslant M \|\Theta-\tilde\Theta\|_{H^1(\Omega)}+ M\rho_o (1+\Delta) \int_t^{\bar\tau}
\phi(s)ds+ M \rho_o \alpha^{-1} |\!|\!|s_\varepsilon -\tilde{s}_\varepsilon|\!|\!|
$$
where $\phi(t)=e^{-\alpha(t-\bar\tau)}\|v^\varepsilon(t)-\tilde{v}^\varepsilon(t)\|_{H^1(\Omega)}$ and using Gronwall's inequality
$$
\|v^\varepsilon(t)-\tilde{v}^\varepsilon(t)\|_{H^1(\Omega)}\leqslant M\left(\|\Theta-\tilde\Theta\|_{H^1(\Omega)}+\rho_o \alpha^{-1} |\!|\!|s_\varepsilon -\tilde{s}_\varepsilon|\!|\!|
\right)e^{(\alpha-M\rho_o(1+\Delta))(t-\bar\tau)}.
$$
From this, we obtain that
\begin{eqnarray*}
\|\Psi(s_\varepsilon)(\Theta)-\Psi(\widetilde{s}_\varepsilon)(\widetilde{\Theta})\|_{{H^1(\Omega)}}&\leqslant&\rho_o M\alpha^{-\frac{1}{2}}\Gamma\left(\frac{1}{2}\right)\left[1+\frac{\rho_o M(1+\Delta)\alpha^{-\frac12}}{\left(2\alpha-\rho_o M(1+\Delta)\right)^{\frac12}}\right]|\!|\!|s_\varepsilon-\widetilde{s}_\varepsilon|\!|\!|+\\
&+&\frac{\rho_o M^2(1+\Delta)}{\left(2\alpha-\rho_o M(1+\Delta)\right)^{\frac12}}
\Gamma\left(\frac{1}{2}\right)\|\Theta-\widetilde{\Theta}\|_{\overline{Q}_\varepsilon^{+}{H^1(\Omega)}}.
\end{eqnarray*}
and
\begin{equation*}
\|\Psi(s_\varepsilon)(\Theta)-\Psi(\widetilde{s}_\varepsilon)(\widetilde{\Theta})\|_{{H^1(\Omega)}}\leqslant\Delta\|\Theta-\widetilde{\Theta}\|_{\overline{Q}_\varepsilon^{+}{H^1(\Omega)}}+\vartheta|\!|\!|s_\varepsilon-\widetilde{s}_\varepsilon|\!|\!|.
\end{equation*}
Hence, $\Psi_\varepsilon$ is a contraction. Therefore, there a fixed point $s_\varepsilon^{*}=\Psi(s_\varepsilon^{*})$ in $\Sigma_\varepsilon$.

Now, we prove that $\{(v^\varepsilon,s_\varepsilon^{*}(v^\varepsilon));\
v\in\overline{Q}_\varepsilon^{+}{H^1(\Omega)}\}$ is invariant for
\eqref{sist_acoplado}. Let $(v_0^\varepsilon,z_0^\varepsilon)\in
W^u(u_{*}^\varepsilon)$, $z_0^\varepsilon=s_\varepsilon^{*}(v_0^\varepsilon)$.
Denote by $v_{*}^\varepsilon(t)$ the solution of the initial value
problems
\begin{equation}\label{ode_flow}
\begin{cases}
\dfrac{dv^\varepsilon}{dt}+B_\varepsilon v^\varepsilon=H_\varepsilon(v^\varepsilon, s_\varepsilon^{*}(v^\varepsilon))\\
v^\varepsilon(0)=v_0^\varepsilon.
\end{cases}
\end{equation}
This defines a curve $(v_\varepsilon^{*}(t),s_\varepsilon^{*}(v_\varepsilon^{*}(t)))\in W^u(u_{*}^\varepsilon)$, $t\in\mathbb{R}$. Also, the only solution of
$$
z_t^\varepsilon+\widetilde{A}_\varepsilon z^\varepsilon=G_\varepsilon(v_{*}^\varepsilon(t),s_\varepsilon^{*}(v_{*}^\varepsilon(t)))
$$
which remains bounded as $t\to-\infty$ must be
$$
z_{*}^\varepsilon(t)=\int_{-\infty}^te^{\widetilde{A}_\varepsilon(t-s)}G_\varepsilon(v_{*}^\varepsilon(s),s_{\varepsilon}^{*}(v_{*}^\varepsilon(s)))ds=s_\varepsilon^{*}(v_{*}^\varepsilon(t)).
$$
This proves the invariance of the graph of $s^*_\varepsilon$. To prove that the graph of $s^*_\varepsilon$ is the unstable manifold assume the exponential attraction of the graph of $s^*_\varepsilon$ uniformly in $\varepsilon$; i.e., if $u^\varepsilon(t)=z^\varepsilon(t)+v^\varepsilon(t)$ is a solution of \eqref{sist_acoplado} with $v^\varepsilon(t)=\overline{Q}_\varepsilon u^\varepsilon(t)$. If, given $\gamma<\alpha$, there exists $\rho_1>0$ such that
\eqref{Exp-att} holds for any $0<\rho_o\leqslant \rho_1$, it is easy to see that, when $z^\varepsilon(t)$ remains bounded as $t\to -\infty$, it follows that (making $t_0\to -\infty$ in \eqref{Exp-att}) that
$z^\varepsilon(t)=s_\varepsilon^{*}(v^\varepsilon(t))$ for all $t\in\mathbb{R}$.

The proof of \eqref{Exp-att} can be carried out as  \cite{Carvalho}, using the singular Gronwall's inequality.

Finally, we obtain the rate of convergence \eqref{convergence_unstable_manifold}. For $\Theta \in \overline{Q}_\varepsilon(H^1_0(\Omega))$,  we have
\[ \begin{split}
\|s^\varepsilon_*(\Theta)-s^0_*(\Theta)\|_{H^1(\Omega)}
& \leqslant \int_{-\infty}^{\bar\tau} \|e^{-\widetilde{A}_\varepsilon(\tau-r)}G_\varepsilon(v^\varepsilon,s_*^\varepsilon(v^\varepsilon))-e^{-\widetilde{A}_\varepsilon(\tau-r)}G_\varepsilon(v^0,s_*^0(v^0))\|_{H^1(\Omega)}\,dr\\
& + \int_{-\infty}^{\bar\tau} \|e^{-\widetilde{A}_\varepsilon(\tau-r)}G_\varepsilon(v^0,s_*^0(v^0))-e^{-\widetilde{A}_\varepsilon-(\tau-r)}G_0(v^0,s_*^0(v^0))\|_{H^1(\Omega)}\,dr\\
& + \int_{-\infty}^{\bar\tau} \|e^{-\widetilde{A}_\varepsilon-(\tau-r)}G_0(v^0,s_*^0(v^0))-e^{-\widetilde{A}_0(\tau-r)}G_0(v^0,s_*^0(v^0))\|_{H^1(\Omega)}\,dr.
\end{split} \]
Since we can estimate $G_\varepsilon-G_0$ by $Q_\varepsilon^{+}-Q_0^{+}$, these integrals are estimated as in the Theorem \ref{Theorem_semigroup_nonlinear_estimate}, and using the singular Gronwall's inequality. Repeating the argument for $v^\varepsilon-v^0 $  we obtain \eqref{convergence_unstable_manifold}.
\end{proof}

\begin{rem}
The projection $\overline{Q}_\varepsilon^+$ has finite hank hence the unstable manifold can be considered as a finite-dimensional object, in fact, $W^u(u^\varepsilon_\ast)\approx\mathbb{R}^m$ and the semigroup restrict to this manifold is conjugate to that generated by a vector field in $\mathbb{R}^m$ that is, it is given by ordinary equation \eqref{ode_flow}. Notice that the rate of convergence of the nonlinear boundary conditions is added to the rate of convergence of unstable manifolds.  
\end{rem}

\section{Convergence of global attractors}\label{Sec7}

In this section, we prove the main result of this work. We used all previous results to put the problem \eqref{eq_reaction_diffusion} in the conditions of Corollary \ref{equi_attraction_continuity}. Here, we consider the dimension $N\geqslant 2$. The scalar case will be addressed in the next section.

Let $X$ be a Banach space. We denote by $\tn{dist}_H(A,B)$ the Hausdorff semi distance between $A,B\subset X$, defined as
$$
\tn{dist}_H(A,B)=\sup_{a\in A}\inf_{b\in B}\|a-b\|_{X},
$$
and we define the symmetric Hausdorff metric by
$$
\tn{d}_H(A,B)=\max\{\tn{dist}_H(A,B),\tn{dist}_H(B,A)\}.
$$

\begin{defi}\label{continuity_attractor_rate}
We say that a family $\{\mathcal{A}_\varepsilon\}_{\varepsilon\in [0\varepsilon_0]}$ of subsets of $X$ is continuous at $\varepsilon=0$ if
$$
\tn{d}_H(\mathcal{A}_\varepsilon,\mathcal{A}_{0})\overset{\varepsilon\to 0^+}\longrightarrow 0.
$$ 
\end{defi}

\begin{theo}\label{main_result_final_rate}
The family of attractors $\{\mathcal{A}_\varepsilon\}_{\varepsilon\in[0,\varepsilon_0]}$ of \eqref{semilinear_problem} for $N\geqslant 2$ is continuous at $\varepsilon=0$. Moreover, this continuity can be estimated by the following rate of convergence
\begin{equation}\label{attractors_rate_final}
\tn{d}_H(\mathcal{A}_\varepsilon,\mathcal{A}_0)\leqslant C[\|p_\varepsilon-p_0\|_{L^\infty(\Omega)}+\eta(\varepsilon)+\tau(\varepsilon)+\kappa(\varepsilon)+\xi(\varepsilon)]^{l},\quad\mbox{with}\quad 0<l<1,
\end{equation}
where $C$ is a constant independent of $\varepsilon$.
\end{theo}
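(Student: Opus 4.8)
## Proof plan

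The plan is to invoke the abstract machinery of Appendix \ref{SecA} --- the continuity and rate result summarized in Corollary \ref{equi_attraction_continuity} --- after verifying that the family \eqref{semilinear_problem} satisfies all of its hypotheses with the explicit convergence rates established in the preceding sections. The key structural fact is that, after the cutoff described following Theorem \ref{uniform_bounds}, each attractor $\mathcal{A}_\varepsilon$ coincides with the union of the unstable manifolds of the (finitely many, hyperbolic) equilibria, $\mathcal{A}_\varepsilon = \bigcup_{i=1}^k W^u(u_*^{\varepsilon,i})$. This gives a decomposition of $\mathcal{A}_\varepsilon$ into pieces whose convergence we have already quantified: the equilibria converge at rate $\|p_\varepsilon-p_0\|_{L^\infty(\Omega)}+\eta(\varepsilon)+\tau(\varepsilon)+\kappa(\varepsilon)+\xi(\varepsilon)$ by Theorem \ref{rate_equilibrium_points}, and the local unstable manifolds converge at rate $(\|p_\varepsilon-p_0\|_{L^\infty(\Omega)}+\eta(\varepsilon)+\tau(\varepsilon))^{\theta}+\kappa(\varepsilon)+\xi(\varepsilon)$ by \eqref{convergence_unstable_manifold} in Theorem \ref{cont-variedades}.

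First I would set $\rho(\varepsilon) := \|p_\varepsilon-p_0\|_{L^\infty(\Omega)}+\eta(\varepsilon)+\tau(\varepsilon)+\kappa(\varepsilon)+\xi(\varepsilon)$ and check the two ingredients the abstract theorem needs. The first is \emph{uniform exponential attraction of the attractors}: there exist $C,\omega>0$ independent of $\varepsilon$ such that $\mathrm{dist}_H(T_\varepsilon(t)B,\mathcal{A}_\varepsilon)\leqslant C e^{-\omega t}$ for bounded sets $B$; this follows from the uniform bounds of Theorem \ref{uniform_bounds}, the exponential estimate \eqref{Exp-att} for the unstable manifolds, and the gradient (Lyapunov) structure of the system, exactly as in \cite{Arrieta,A.N.Carvalho2010}. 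The second is the \emph{one-sided convergence of the nonlinear semigroup on the attractor}: for $t$ in a fixed compact time interval and initial data in the (uniformly bounded) attractors, Theorem \ref{Theorem_semigroup_nonlinear_estimate} gives
$$
\|T_\varepsilon(t)w^\varepsilon - T_0(t)w^0\|_{H^1(\Omega)} \leqslant C e^{Lt} t^{-\frac{1+\beta}{2}-\theta}\big[\|w^\varepsilon-w^0\|_{H^1(\Omega)} + \rho(\varepsilon)^{2\theta}\big].
$$
Feeding these two estimates into Corollary \ref{equi_attraction_continuity} yields $\mathrm{d}_H(\mathcal{A}_\varepsilon,\mathcal{A}_0)\leqslant C\,\rho(\varepsilon)^{l}$ with an exponent $l\in(0,1)$ determined by the interplay between the exponential attraction rate $\omega$ and the blow-up exponent $\frac{1+\beta}{2}+\theta$ of the semigroup estimate (concretely $l = \frac{2\theta\,\omega}{\omega + \text{(loss from the singularity)}}$, optimized over admissible $\theta\in(0,\tfrac12]$); this is precisely where the loss $0<l<1$ in \eqref{attractors_rate_final} is born.

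The main obstacle is controlling the \emph{backward} direction, $\mathrm{dist}_H(\mathcal{A}_\varepsilon,\mathcal{A}_0)$, since the semigroup estimate of Theorem \ref{Theorem_semigroup_nonlinear_estimate} only propagates forward in time and a point of $\mathcal{A}_\varepsilon$ has no a priori nearby counterpart in $\mathcal{A}_0$. Two routes handle this: one can use the gradient structure to write every point of $\mathcal{A}_\varepsilon$ as lying on a global bounded solution backward-asymptotic to some equilibrium $u_*^{\varepsilon,i}$, then combine the rate \eqref{convergence_unstable_manifold} for the local unstable manifold near $u_*^{\varepsilon,i}$ with the forward semigroup estimate to ``flow out'' along the attractor; alternatively, and more cleanly, one invokes the Shadowing Theory of Appendix \ref{SecB}, under which a Morse--Smale-type structure (available here because the equilibria are hyperbolic and, in the gradient case, there are no other invariant sets) forces the perturbed global solutions to be shadowed by unperturbed ones with an error controlled by $\rho(\varepsilon)$ raised to a power. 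Either way the symmetric distance is bounded by $C\rho(\varepsilon)^l$, and since each of $\|p_\varepsilon-p_0\|_{L^\infty(\Omega)},\eta,\tau,\kappa,\xi\to 0^+$ we get continuity at $\varepsilon=0$ as a corollary, completing the proof of \eqref{attractors_rate_final}.
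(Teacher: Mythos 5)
Your core argument is the paper's own: establish (i) uniform-in-$\varepsilon$ exponential attraction of the uniformly bounded set $D=\bigcup_{\varepsilon}\mathcal{A}_\varepsilon$ (Theorem \ref{uniform_bounds} plus \eqref{Exp-att}) and (ii) the semigroup rate estimate of Theorem \ref{Theorem_semigroup_nonlinear_estimate}, then feed both into Corollary \ref{equi_attraction_continuity}; the paper kills the singular factor $t^{-\frac{1+\beta}{2}-\theta}$ simply by restricting to $t\geqslant 1$, chooses $\theta=\frac12$, and reads off $l=\frac{\gamma}{\gamma+L}$ (note the estimate must be available for \emph{all} large $t$, not a compact time interval, since the proof balances $Ce^{Lt}\delta(\varepsilon)$ against $Ce^{-\gamma t}$ --- that balancing, not the singularity, is where the loss $l<1$ is born). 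Where you diverge is the ``backward direction'': this worry is unnecessary, because the proof of Theorem \ref{equi_attraction} bounds precisely $\tn{dist}_H(\mathcal{A}_\varepsilon,\mathcal{A}_0)$ via
$\tn{dist}_H(\mathcal{A}_\varepsilon,\mathcal{A}_0)\leqslant \tn{dist}_H(T_\varepsilon(t)\mathcal{A}_\varepsilon,T_0(t)\mathcal{A}_\varepsilon)+\tn{dist}_H(T_0(t)\mathcal{A}_\varepsilon,\mathcal{A}_0)$,
using invariance of $\mathcal{A}_\varepsilon$, the semigroup estimate with the \emph{same} initial datum, and equi-attraction of the limit semigroup on $D$; the reverse inequality is symmetric, so no unstable-manifold decomposition, gradient-flow ``flow-out,'' or shadowing is needed. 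Moreover, the shadowing alternative you sketch would not work here: hyperbolic equilibria plus a gradient structure do not give a Morse--Smale system (transversality of stable and unstable manifolds is automatic only in the scalar case $N=1$, which is exactly why the paper reserves Appendix \ref{SecB} for Section \ref{Sec8}), and the shadowing results of Appendix \ref{SecB} apply to maps of $\mathbb{R}^M$, i.e.\ they require the finite-dimensional invariant-manifold reduction obtained from the spectral gap condition available only in one dimension. So your primary route is correct and matches the paper; the auxiliary machinery you invoke for $\tn{dist}_H(\mathcal{A}_\varepsilon,\mathcal{A}_0)$ is superfluous and, in its shadowing variant, not justified for $N\geqslant 2$.
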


\begin{proof}
It follows from Theorem \ref{Theorem_semigroup_nonlinear_estimate}  that for $u,v\in H^1(\Omega)$,
$$
\tn{dist}_H(T_\varepsilon(t)u,T_0(t)v)\leqslant Ce^{Lt}(\|u-v\|_{H^1(\Omega)}+\delta(\varepsilon)),\quad t\geqslant 1,
$$
where $\delta(\varepsilon)=[\|p_\varepsilon-p_0\|_{L^\infty(\Omega)}+\eta(\varepsilon)]^{2\theta}+ \tau(\varepsilon)+\kappa(\varepsilon)+\xi(\varepsilon) $, with $C,L>0$ and $0<\theta\leqslant\frac{1}{2}$.

By Theorem \ref{uniform_bounds} the set $D=\cup_{\varepsilon\in [0,\varepsilon_0]} \mathcal{A}_\varepsilon$ is uniformly bounded (in $\varepsilon$) and by \eqref{Exp-att} the attractor $\mathcal{A}_\varepsilon$ attracts  exponentially, i.e.
$$
\sup_{\varepsilon\in [0,\varepsilon_0]}\tn{dist}_H(T_\varepsilon(t)D,\mathcal{A}_\varepsilon)\leqslant \Theta(t),\quad t\geqslant 1
$$ 
where $\Theta(t)=Ce^{-\gamma (t-t_0)}$.

Now applying the Corollary \ref{equi_attraction_continuity} we get
$$
\tn{d}_H(\mathcal{A}_\varepsilon,\mathcal{A}_0)\leqslant C[[\|p_\varepsilon-p_0\|_{L^\infty(\Omega)}+\eta(\varepsilon)]^{2\theta}+ \tau(\varepsilon)+\kappa(\varepsilon)+\xi(\varepsilon)]^{\frac{\gamma}{\gamma+L}},
$$
which implies \eqref{attractors_rate_final} taking $\theta=\frac{1}{2}$ and $l=\frac{\gamma}{\gamma+L}$.
\end{proof}

\section{Scalar reaction-diffusion equations}\label{Sec8}

In this section, we study the problem \eqref{semilinear_problem} for the case $N=1$. In this scalar situation, we will have a family of Morse Smale problems with gap conditions in the eigenvalues. This property allows us to obtain a finite-dimensional invariant manifold and then the restriction of the flow in the attractors is given by ordinary differential equations in the Euclidian Space $\mathbb{R}^M$, where $M=m$ was defined in \eqref{spectro_A}. Once this is done, we will use the results of the \ref{SecB}  to obtain a better rate of convergence than the previous section for the convergence of the attractors of \eqref{semilinear_problem}.  

Consider the scalar parabolic equations 
\begin{equation}\label{esc_eq_reaction_diffusion}
\begin{cases}
\partial_tu^\varepsilon-(p_\varepsilon(x) u^\varepsilon_x)_x+(\lambda_1+V_\varepsilon(x))u^\varepsilon=f^\varepsilon(u^\varepsilon),&\ 0<x<1,\quad t>0, \\
\partial_xu^\varepsilon+(\lambda_2+b_\varepsilon(x))u^\varepsilon=g^\varepsilon(u^\varepsilon),&\ x\in \{0,1\},
\end{cases}
\end{equation}
where $\varepsilon\in[0,\varepsilon_0]$, and $\lambda_1$, $\lambda_2$, $p_\varepsilon, V_\varepsilon, b_\varepsilon, f^\varepsilon$ and $g^\varepsilon$ satisfy \eqref{condlam}, \eqref{conv_p}, \eqref{conv_v} and \eqref{conv_f}.

We can rewrite  \eqref{esc_eq_reaction_diffusion} abstractly as in \eqref{semilinear_problem}, where the nonlinearity is given by $h^\varepsilon:H^1(\Omega)\to H^{-\beta}(\Omega)$ is given by \eqref{defi_h}, for  $\beta\in (\frac{1}{2},1)$ and $\Omega=(0,1)$. Thus, with the assumption \eqref{diss_cond},  \eqref{esc_eq_reaction_diffusion} is well posed in $H^1(\Omega)$ and the solutions through $u_0^\varepsilon$ satisfy \eqref{semigroupononlinear}. Moreover, the nonlinear semigroup has  global attractor $\mathcal{A}_\varepsilon$ in $H^1(\Omega)$, such that $\overline{\bigcup_{\varepsilon\in [0,\varepsilon_0]}\mathcal{A}_\varepsilon}$ is compact.  

Moreover, the scalar equation \eqref{esc_eq_reaction_diffusion} is a Morse-Smale problem (see \cite{Henry1985}) in the sense that the unstable and stable local manifolds (of different equilibrium points) have transversal intersection which implies geometrical structural stability of the phase space. Consequently, the nonlinear semigroups are Morse-Smale semigroup for $\varepsilon\in[0,\varepsilon_0]$.

\begin{theo}\label{esc_continuity_nonlinear_semigroup}
For each $w_0\in \overline{\bigcup_{\varepsilon\in [0,\varepsilon_0]}\mathcal{A}_\varepsilon}$, there is constant $C>0$ independent of $\varepsilon$ such that
$$
\|T_\varepsilon(1)w_0-T_0(1)w_0\|_{H^1(\Omega)}\leqslant C\delta(\varepsilon)|\log(\delta(\varepsilon))|, 
$$
where $\delta(\varepsilon)=\|p_\varepsilon-p_0\|_{L^\infty(\Omega)}+\eta(\varepsilon)+\tau(\varepsilon)+\kappa(\varepsilon)+\xi(\varepsilon)$.
\end{theo}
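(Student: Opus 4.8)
The plan is to run the variation-of-constants formula \eqref{semigroupononlinear} at $t=1$, using that --- by Theorem \ref{uniform_bounds} and the cut-off performed right after it --- the orbit $s\mapsto T_0(s)w_0$ through a point $w_0$ of the uniformly bounded set $\overline{\bigcup_{\varepsilon\in[0,\varepsilon_0]}\mathcal{A}_\varepsilon}$ stays in a set on which the (cut-off) nonlinearity is bounded, so that $\|h^0(T_0(s)w_0)\|_{H^{-\beta}(\Omega)}\leqslant C$ uniformly for $s\in[0,1]$. Writing $\delta=\delta(\varepsilon)$ I would decompose
\begin{align*}
T_\varepsilon(1)w_0-T_0(1)w_0&=\big(e^{-A_\varepsilon}-e^{-A_0}\big)w_0+\int_0^1 e^{-A_\varepsilon(1-s)}\big[h^\varepsilon(T_\varepsilon(s)w_0)-h^0(T_0(s)w_0)\big]\,ds\\
&\qquad+\int_0^1\big(e^{-A_\varepsilon(1-s)}-e^{-A_0(1-s)}\big)h^0(T_0(s)w_0)\,ds=:I+II+III,
\end{align*}
and estimate the three pieces separately, bearing in mind that the linear estimates of Sections \ref{Sec3}--\ref{Sec5} involve only $\|p_\varepsilon-p_0\|_{L^\infty(\Omega)}+\eta(\varepsilon)+\tau(\varepsilon)\leqslant\delta(\varepsilon)$, which may be traded for $\delta(\varepsilon)$ at the end since $x\mapsto x|\log x|$ is increasing near $0$.

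For $I$, the fine linear estimate \eqref{linear_semigroup_estimate2} at $t=1$ gives $\|I\|_{H^1(\Omega)}\leqslant C\delta\,\|w_0\|_{H^{-\beta}(\Omega)}\leqslant C\delta$. The heart of the argument is $III$. Using $\|h^0(T_0(s)w_0)\|_{H^{-\beta}(\Omega)}\leqslant C$ and writing $\sigma=1-s$, I would bound the integrand of $III$ by \emph{both} available estimates of its operator norm: the $\varepsilon$-uniform bound \eqref{linear_semigroup_estimate11}, $C\sigma^{-\frac{1+\beta}{2}}e^{-\alpha\sigma}$, whose singularity is \emph{integrable} because $\tfrac{1+\beta}{2}<1$, and the fine bound \eqref{linear_semigroup_estimate2}, $C\delta\,\sigma^{-1}e^{-\alpha\sigma}$, which carries the small factor $\delta$ but the \emph{borderline}, non-integrable singularity $\sigma^{-1}$. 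Splitting $\int_0^1\cdots\,d\sigma$ at the crossover time $\sigma_{*}=\delta^{2/(1-\beta)}$ (the first bound on $(0,\sigma_{*})$, the second on $(\sigma_{*},1)$) gives
\[
\|III\|_{H^1(\Omega)}\ \leqslant\ C\,\sigma_{*}^{\frac{1-\beta}{2}}+C\delta\,|\log\sigma_{*}|\ \leqslant\ C\delta+C\delta\,|\log\delta|\ \leqslant\ C\delta\,|\log\delta|
\]
for $\varepsilon$ small; this crossover is precisely where the logarithmic factor is manufactured, and it is the only genuinely new ingredient compared with Theorem \ref{Theorem_semigroup_nonlinear_estimate}, where the two bounds were instead merged by a \emph{power} interpolation, leaving the sub-optimal exponent $2\theta<1$.

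Finally, \eqref{linear_semigroup_estimate1} and \eqref{estimate_h} give
\[
\|II\|_{H^1(\Omega)}\leqslant C\delta+C\int_0^1(1-s)^{-\frac{1+\beta}{2}}e^{-\alpha(1-s)}\|T_\varepsilon(s)w_0-T_0(s)w_0\|_{H^1(\Omega)}\,ds,
\]
the $\kappa(\varepsilon)+\xi(\varepsilon)$ coming from \eqref{estimate_h} being $\leqslant\delta$. Carrying out the same three-term estimate at an arbitrary $t\in(0,1]$ --- bounding the first term, $\|(e^{-A_\varepsilon t}-e^{-A_0 t})w_0\|_{H^1(\Omega)}$, by $C\min\{t^{-\frac{1+\beta}{2}},\delta\,t^{-1}\}e^{-\alpha t}$, which lies in $L^1(0,1)$ with norm $\leqslant C\delta|\log\delta|$ by the same crossover, and the third term uniformly by $C\delta|\log\delta|$ --- one obtains, with $\psi(t):=\|T_\varepsilon(t)w_0-T_0(t)w_0\|_{H^1(\Omega)}$,
\[
\psi(t)\leqslant a(t)+C\int_0^t(t-s)^{-\frac{1+\beta}{2}}e^{-\alpha(t-s)}\psi(s)\,ds ,
\]
where $a\in L^1(0,1)$ with $\|a\|_{L^1(0,1)}\leqslant C\delta|\log\delta|$. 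The singular Gronwall inequality \cite[Chapter 6]{A.N.Carvalho2010} then closes the estimate: $\psi(1)\leqslant C\delta|\log\delta|$. The main obstacle is the bookkeeping in $III$ (and in the first term above): the two available linear-semigroup bounds must be interpolated by a crossover split at $\delta^{2/(1-\beta)}$ rather than by a power, balancing the integrable singularity of the uniform bound against the small coefficient of the borderline-singular fine bound --- this is exactly what replaces the exponent $2\theta<1$ of Theorem \ref{Theorem_semigroup_nonlinear_estimate} by the near-optimal factor $\delta|\log\delta|$ in the scalar setting.
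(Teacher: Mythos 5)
Your proposal is correct and follows essentially the same route as the paper: the variation-of-constants formula at $t=1$, combined with the two linear-semigroup bounds \eqref{linear_semigroup_estimate11} and \eqref{linear_semigroup_estimate2} merged via their minimum, and the singular Gronwall inequality. Your crossover split at $\sigma_*=\delta^{2/(1-\beta)}$ is exactly the explicit evaluation of the integral of $\min\{\sigma^{-\frac{1+\beta}{2}},\delta\,\sigma^{-1}\}e^{-\alpha\sigma}$ that the paper quotes from \cite{Santamaria2017}, so the logarithmic factor is produced in the same way.
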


\begin{proof}
As in the Theorem \ref{linear_semigroup_estimate12} we have for,  $\varepsilon\in [0,\varepsilon_0]$,
$$
\|e^{-A_\varepsilon t}-e^{-A_0 t}\|_{\mathcal{L}(H^{-\beta}(\Omega),H^1(\Omega))}\leqslant Ce^{-\alpha t}t^{-\frac{1+\beta}{2}}
$$
and
$$
\|e^{-A_\varepsilon t}-e^{-A_0 t}\|_{\mathcal{L}(H^{-\beta}(\Omega),H^1(\Omega))}\leqslant Ce^{-\alpha t}[\|p_\varepsilon-p_0\|_{L^\infty(\Omega)}+\eta(\varepsilon)]t^{-1}.
$$

Notice that the terms $ t^{-\frac{1+\beta}{2}}$ and $t^{-1}$ in the estimates above originate a singularity in the formula of the variation of the constants. This is the main difficulty in estimating nonlinear semigroups. In the Theorem \ref{Theorem_semigroup_nonlinear_estimate} we performed an interpolation of these terms together with the rate of convergence of resolvent operators which resulted in the considerable loss in the rate of convergence of attractors \eqref{attractors_rate_final}. In the situation where the limiting problems are Morse-Smale, the authors in \cite{Santamaria2017} had the same problem, however, they used the following estimate (placed in our context). If we denote  $l_\varepsilon(t)=\min\{t^{-\frac{1+\beta}{2}},\|p_\varepsilon-p_0\|_{L^\infty(\Omega)}+\eta(\varepsilon)] t^{-1}\}$, then 
\[
\int_{-\infty}^{\bar\tau} l_\varepsilon(\bar\tau-r)e^{-\alpha(\bar\tau-r)}\,dr\leqslant C[\|p_\varepsilon-p_0\|_{L^\infty(\Omega)}+\eta(\varepsilon)]|\log(\|p_\varepsilon-p_0\|_{L^\infty(\Omega)}+\eta(\varepsilon))|.
\]

Since the nonlinear semigroup is given by \eqref{semigroupononlinear}, then for $0<t\leqslant 1$, we have
\begin{multline*}
\|T_\varepsilon(t)w_0-T_0(t)w_0\|_{H^1(\Omega)} \leqslant \|(e^{-A_\varepsilon t}-e^{-A_0 t})w_0\|_{H^1(\Omega)} \\ +\int_0^{t} \|e^{-A_\varepsilon(t-s)}h^\varepsilon(T_\varepsilon(s)w_0)-e^{-A_0(t-s)}h^0(T_0(s)w_0)\|_{H^1(\Omega)}\,ds,
\end{multline*}
Now, as in Theorem \ref{Theorem_semigroup_nonlinear_estimate}, we obtain 
$$
\|T_\varepsilon(t)w_0-T_0(t)w_0\|_{H^1(\Omega)}\leqslant C\delta(\varepsilon)|\log(\delta(\varepsilon))|+C\delta(\varepsilon)|\log(\delta(\varepsilon))|e^{Kt},
$$
where $K>0$. Now the result follows taking $t=1$.
\end{proof}

We saw in the Section \ref{Sec3} that, for each $\varepsilon\in[0,\varepsilon_0]$, the spectrum $\sigma(-A_\varepsilon)$ of $-A_\varepsilon$, ordered and counting multiplicity is given by $...-\lambda^\varepsilon_m<-\lambda^\varepsilon_{m-1}<...<-\lambda_0^\varepsilon$. Moreover  is true the following gap condition
$$
|\lambda^\varepsilon_m-\lambda^\varepsilon_{m-1}| \to \infty \quad\tn{as}\quad m\to\infty. 
$$
This property enables us to find a finite dimension invariant manifold as well as in \cite{Santamaria2014} and \cite{Santamaria2013}.

\begin{theo}\ For sufficiently large $m$ and  $\varepsilon$  small there is an invariant manifold $\mathcal{M}_\varepsilon$ for the problem \eqref{semilinear_problem} given by
$$
\mathcal{M}_\varepsilon=\{u^\varepsilon\in H^1(\Omega)\,;\, u^\varepsilon = Q_\varepsilon u^\varepsilon+s_{*}^\varepsilon(Q_\varepsilon u^\varepsilon)\},\quad \varepsilon\in[0,\varepsilon_0],
$$ 
where, $Q_\varepsilon$ is the spectral projection and  $s_\ast^\varepsilon:Y_\varepsilon\to Z_\varepsilon$ is a Lipschitz continuous map satisfying
\[
|\!|\!|s_\ast^\varepsilon-s_\ast^0 |\!|\!|=\sup_{v\in Y_\varepsilon}\|s_\ast^\varepsilon(v)-s_\ast^0(v)\|_{H^1(\Omega)}\leqslant C\delta(\varepsilon)|\log(\delta(\varepsilon))|,
\]
where $\delta(\varepsilon)=\|p_\varepsilon-p_0\|_{L^\infty(\Omega)}+\eta(\varepsilon)+\tau(\varepsilon)+\kappa(\varepsilon)+\xi(\varepsilon)$, $C>0$  is  constant independent of $\varepsilon$ and $Y_\varepsilon=Q_\varepsilon H^1(\Omega)$ and $Z_\varepsilon=(I-Q_\varepsilon)H^1(\Omega)$.
The invariant manifold $\mathcal{M}_\varepsilon$ is exponentially attracting and the  attractor $\mathcal{A}_\varepsilon$ of the problem \eqref{semilinear_problem} lies  in $\mathcal{M}_\varepsilon$. The flow on $\mathcal{A}_\varepsilon$ is given by
$$
u^\varepsilon(t)=v^\varepsilon(t)+s_\ast^\varepsilon(v^\varepsilon(t)), \quad t\in\mathbb{R},
$$ 
where $v^\varepsilon(t)$ satisfy 
\begin{equation}\label{ode_reduced_A}
\dfrac{dv^\varepsilon}{dt}+A_\varepsilon^+v^\varepsilon=Q_\varepsilon h^\varepsilon(v^\varepsilon+ s_\ast^\varepsilon(v^\varepsilon(t))),
\end{equation}
where $A_\varepsilon^+=A_\varepsilon|_{Y_\varepsilon}$.
\end{theo}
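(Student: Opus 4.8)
The plan is to follow the Lyapunov--Perron scheme already used for Theorem \ref{cont-variedades}, but now \emph{globally} (an inertial-type manifold) rather than only locally, exploiting the spectral gap $|\lambda_m^\varepsilon-\lambda_{m-1}^\varepsilon|\to\infty$ together with the global Lipschitz bound \eqref{estimate_h} valid after the cutoff. First I would fix $m$ large and $\varepsilon$ small, and use the convergence of eigenvalues (the Corollary following Theorem \ref{theo_spectral_projection}) to guarantee that $\lambda_{m-1}^\varepsilon$ and $\lambda_m^\varepsilon$ remain close to their limits, so that the dichotomy exponents may be chosen uniformly in $\varepsilon$: writing $A_\varepsilon^{+}=A_\varepsilon Q_\varepsilon$ on $Y_\varepsilon$ and $\widetilde A_\varepsilon=A_\varepsilon(I-Q_\varepsilon)$ on $Z_\varepsilon$, one has $\|e^{-A_\varepsilon^{+}t}\|\leqslant Me^{\omega_{+}t}$ for $t\leqslant 0$ and $\|e^{-\widetilde A_\varepsilon t}\|_{\mathcal{L}(H^{-\beta}(\Omega),H^1(\Omega))}\leqslant M t^{-\frac{1+\beta}{2}}e^{-\omega_{-}t}$ for $t>0$, with $\omega_{-}<\omega_{+}$ and the gap $\omega_{+}-\omega_{-}$ as large as we wish. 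On the complete metric space $\Sigma_\varepsilon$ of maps $s:Y_\varepsilon\to Z_\varepsilon$ with $|\!|\!|s|\!|\!|\leqslant D$ and Lipschitz constant $\leqslant\Delta$, I would then define the Lyapunov--Perron operator $\Psi_\varepsilon(s)\Theta=\int_{-\infty}^{\bar\tau}e^{-\widetilde A_\varepsilon(\bar\tau-r)}G_\varepsilon(v^\varepsilon(r),s(v^\varepsilon(r)))\,dr$, where $v^\varepsilon$ solves the $Y_\varepsilon$-equation backward from $\Theta$; the gap condition forces the $\Gamma\!\left(\tfrac12\right)$-type constants appearing in the estimates of the proof of Theorem \ref{cont-variedades} to be small enough that $\Psi_\varepsilon$ is a contraction on $\Sigma_\varepsilon$. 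Its fixed point is $s_\ast^\varepsilon$ and $\mathcal{M}_\varepsilon=\{v+s_\ast^\varepsilon(v):v\in Y_\varepsilon\}$.

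Invariance of $\mathcal{M}_\varepsilon$ and exponential attraction follow by the same argument as in Theorem \ref{cont-variedades} (the characterization of the bounded-as-$t\to-\infty$ solution of the $z^\varepsilon$-equation as $s_\ast^\varepsilon(v_\ast^\varepsilon(t))$, together with the singular Gronwall inequality from \cite[Chapter 6]{A.N.Carvalho2010}), exactly as in \cite{Santamaria2014,Santamaria2013}. Since $\mathcal{A}_\varepsilon$ is bounded, invariant and attracted exponentially by $\mathcal{M}_\varepsilon$, it must lie in $\mathcal{M}_\varepsilon$; projecting \eqref{semilinear_problem} by $Q_\varepsilon$ and restricting to the manifold yields the reduced equation \eqref{ode_reduced_A}, which, as $Y_\varepsilon$ is $m$-dimensional, is an ordinary differential equation on $Y_\varepsilon\cong\mathbb{R}^{M}$.

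For the rate estimate I would mimic the last part of the proof of Theorem \ref{cont-variedades}: for $\Theta\in Y_\varepsilon$ (identified with an element of $Y_0$ through $Q_\varepsilon Q_0$) write $s_\ast^\varepsilon(\Theta)-s_\ast^0(\Theta)$ as the sum of three integrals, with integrands $e^{-\widetilde A_\varepsilon(\bar\tau-r)}[G_\varepsilon(v^\varepsilon,s_\ast^\varepsilon(v^\varepsilon))-G_\varepsilon(v^0,s_\ast^0(v^0))]$, then $e^{-\widetilde A_\varepsilon(\bar\tau-r)}[G_\varepsilon(v^0,s_\ast^0(v^0))-G_0(v^0,s_\ast^0(v^0))]$, and finally $(e^{-\widetilde A_\varepsilon(\bar\tau-r)}-e^{-\widetilde A_0(\bar\tau-r)})G_0(v^0,s_\ast^0(v^0))$. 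The first integral is absorbed by the singular Gronwall inequality, leaving a small multiple of $|\!|\!|s_\ast^\varepsilon-s_\ast^0|\!|\!|$ and of $\sup_r\|v^\varepsilon(r)-v^0(r)\|_{H^1(\Omega)}$, which is controlled by repeating the argument on the finite-dimensional $Y_\varepsilon$-flow. The second integral is bounded by $\|Q_\varepsilon-Q_0\|_{\mathcal{L}(H^{-\beta}(\Omega),H^1(\Omega))}\leqslant C\delta(\varepsilon)$ (Theorem \ref{theo_spectral_projection}) times an integrable exponential kernel. The crucial third integral is where one beats \eqref{final_est_1}: instead of interpolating the two bounds $t^{-\frac{1+\beta}{2}}$ and $\delta(\varepsilon)t^{-1}$ for $e^{-\widetilde A_\varepsilon t}-e^{-\widetilde A_0 t}$ (which costs a power), I would use the kernel $l_\varepsilon(t)=\min\{C t^{-\frac{1+\beta}{2}},\,C\delta(\varepsilon)t^{-1}\}e^{-\alpha t}$ and the elementary estimate $\int_{-\infty}^{\bar\tau}l_\varepsilon(\bar\tau-r)\,dr\leqslant C\delta(\varepsilon)|\log\delta(\varepsilon)|$ already exploited in Theorem \ref{esc_continuity_nonlinear_semigroup} (borrowed from \cite{Santamaria2017}). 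Collecting the three contributions and choosing $\rho_o$ small enough gives $|\!|\!|s_\ast^\varepsilon-s_\ast^0|\!|\!|\leqslant C\delta(\varepsilon)|\log\delta(\varepsilon)|$, and Appendix \ref{SecB} then transfers this to the attractors.

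The main obstacle, I expect, is keeping the logarithmic rate intact through the fixed-point/Gronwall machinery: the cross-terms $\sup_r\|v^\varepsilon(r)-v^0(r)\|_{H^1(\Omega)}$ and $|\!|\!|s_\ast^\varepsilon-s_\ast^0|\!|\!|$ reappear on the right-hand side, so one has to verify that their coefficients are strictly below $1$ (this is exactly where the gap condition and the smallness of $\rho_o$ enter) and, more delicately, that the bound for $v^\varepsilon-v^0$ is itself $O(\delta(\varepsilon)|\log\delta(\varepsilon)|)$ rather than merely $O(\delta(\varepsilon)^{2\theta})$ --- which again forces one to replace the interpolation of Theorem \ref{linear_semigroup_estimate12} by the $\min$-kernel above. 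A secondary technical point is the uniformity in $\varepsilon$ of the dichotomy exponents and of the constants $M,D,\Delta$, which follows from Theorem \ref{resolvent_convergence} and the eigenvalue convergence, as in \cite{Santamaria2014,Santamaria2013}.
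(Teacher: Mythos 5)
Your proposal is correct and takes essentially the same route as the paper, whose proof consists only of the remark that one repeats the steps of Theorem \ref{cont-variedades} (Lyapunov--Perron fixed point, invariance, exponential attraction) in the setting of \cite{Santamaria2014} and \cite{Santamaria2013}, with the gap condition supplying the global manifold and the $\min$-kernel estimate of Theorem \ref{esc_continuity_nonlinear_semigroup} giving the logarithmic rate. Your write-up actually supplies more detail (uniform dichotomy exponents, contraction constants, the three-integral splitting for $s_\ast^\varepsilon-s_\ast^0$) than the paper, which delegates these points to the cited references.
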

\begin{proof}
The proof follows the same steps of the Theorem \ref{cont-variedades}, see \cite{Santamaria2014} and \cite{Santamaria2013} for more details.
\end{proof}

Thus the flow in $\mathcal{A}_\varepsilon$  is given by ordinary equation \eqref{ode_reduced_A}. Since $Q_\varepsilon$ has finite rank that we denote by $M$ and, we can consider $v^\varepsilon\in \mathbb{R}^M$ and $H_\varepsilon(v^\varepsilon)=Q_\varepsilon h^\varepsilon(v^\varepsilon+ s_\ast^\varepsilon(v^\varepsilon(t)))$ a continuously differentiable map in $\mathbb{R}^M$. For each $\varepsilon\in [0,\varepsilon_0]$, we denote $\tilde{T}_\varepsilon=\tilde{T}_\varepsilon(1)$, where $\tilde{T}_\varepsilon(\cdot)$ is the semigroup generated by solution $ v^\varepsilon(\cdot)$  of \eqref{ode_reduced_A} in $\mathbb{R}^M$. We have the following convergences
\begin{equation}\label{estimate_one_map}
\|\tilde{T}_\varepsilon-\tilde{T}_0\|_{C^1(\mathbb{R}^M,\mathbb{R}^M)}\overset{\varepsilon\to 0^+}\longrightarrow 0\quad\tn{and}\quad\|\tilde{T}_\varepsilon-\tilde{T}_0\|_{L^\infty(\mathbb{R}^M,\mathbb{R}^M)}\leqslant C\delta(\varepsilon)|\log(\delta(\varepsilon))|,
\end{equation}
where the last estimate can be proved as well as in Theorem \ref{esc_continuity_nonlinear_semigroup}. 

Therefore, we have a Morse-Smale semigroup in $\mathbb{R}^M$ and using techniques of shadowing see \ref{SecB},  we are ready to prove the main result of this section.

\begin{theo} Let $\mathcal{A}_\varepsilon$, $\varepsilon\in[0,\varepsilon_0]$, be the attractor for the problem \eqref{semilinear_problem}. Then there is constant $C>0$ independent of $\varepsilon$ such that
$$
\tn{d}_H(\mathcal{A}_\varepsilon,\mathcal{A}_0)\leqslant C\delta(\varepsilon)|\log(\delta(\varepsilon))|,
$$
where $\delta(\varepsilon)=\|p_\varepsilon-p_0\|_{L^\infty(\Omega)}+\eta(\varepsilon)+\tau(\varepsilon)+\kappa(\varepsilon)+\xi(\varepsilon)$.
\end{theo}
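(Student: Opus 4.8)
The plan is to transfer the problem to the finite-dimensional reduced system and then invoke the Shadowing Theory for Morse--Smale semigroups developed in Appendix \ref{SecB}. First I would recall from the invariant manifold theorem above that $\mathcal{A}_\varepsilon\subset\mathcal{M}_\varepsilon$ and that the flow on $\mathcal{A}_\varepsilon$ is conjugate, via the homeomorphism $v\mapsto v+s_\ast^\varepsilon(v)$, to the flow of the ordinary differential equation \eqref{ode_reduced_A} on $\mathbb{R}^M$. Denote by $\widetilde{\mathcal{A}}_\varepsilon\subset\mathbb{R}^M$ the global attractor of the time-one map $\tilde T_\varepsilon=\tilde T_\varepsilon(1)$ generated by \eqref{ode_reduced_A}; then $v\mapsto v+s_\ast^\varepsilon(v)$ carries $\widetilde{\mathcal{A}}_\varepsilon$ onto $\mathcal{A}_\varepsilon$, so it suffices to estimate $\tn{d}_H(\widetilde{\mathcal{A}}_\varepsilon,\widetilde{\mathcal{A}}_0)$ in $\mathbb{R}^M$ and then control the error introduced by the graph maps $s_\ast^\varepsilon$.

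Second, I would apply the quantitative shadowing results of Appendix \ref{SecB}. Since each $\tilde T_\varepsilon$ is a Morse--Smale diffeomorphism on $\mathbb{R}^M$ (inherited from the Morse--Smale structure of the scalar problem \eqref{esc_eq_reaction_diffusion}), $\tilde T_\varepsilon\to\tilde T_0$ in $C^1(\mathbb{R}^M,\mathbb{R}^M)$ by \eqref{estimate_one_map}, and $\tilde T_0$ is structurally stable, the shadowing machinery provides, for $\varepsilon$ small, a conjugacy between $\tilde T_\varepsilon$ and $\tilde T_0$ near the attractors whose displacement is controlled linearly by $\|\tilde T_\varepsilon-\tilde T_0\|_{L^\infty(\mathbb{R}^M,\mathbb{R}^M)}$. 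Inserting the bound $\|\tilde T_\varepsilon-\tilde T_0\|_{L^\infty(\mathbb{R}^M,\mathbb{R}^M)}\leqslant C\delta(\varepsilon)|\log(\delta(\varepsilon))|$ from \eqref{estimate_one_map} yields
$$
\tn{d}_H(\widetilde{\mathcal{A}}_\varepsilon,\widetilde{\mathcal{A}}_0)\leqslant C\delta(\varepsilon)|\log(\delta(\varepsilon))|,
$$
i.e. the shadowing argument transfers the rate of the one-step maps to the attractors with no Hölder exponent loss, which is precisely the improvement over \eqref{attractors_rate_final}.

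Third, I would lift the estimate back to $H^1(\Omega)$. For $w^\varepsilon\in\mathcal{A}_\varepsilon$ write $w^\varepsilon=v^\varepsilon+s_\ast^\varepsilon(v^\varepsilon)$ with $v^\varepsilon\in\widetilde{\mathcal{A}}_\varepsilon$, choose $v^0\in\widetilde{\mathcal{A}}_0$ with $\|v^\varepsilon-v^0\|_{H^1(\Omega)}$ at most $\tn{d}_H(\widetilde{\mathcal{A}}_\varepsilon,\widetilde{\mathcal{A}}_0)$ up to an arbitrarily small amount, and set $w^0=v^0+s_\ast^0(v^0)\in\mathcal{A}_0$. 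Then
$$
\|w^\varepsilon-w^0\|_{H^1(\Omega)}\leqslant \|v^\varepsilon-v^0\|_{H^1(\Omega)}+\|s_\ast^\varepsilon(v^\varepsilon)-s_\ast^0(v^0)\|_{H^1(\Omega)},
$$
and the second term is bounded by $\|s_\ast^\varepsilon(v^\varepsilon)-s_\ast^0(v^\varepsilon)\|_{H^1(\Omega)}+\|s_\ast^0(v^\varepsilon)-s_\ast^0(v^0)\|_{H^1(\Omega)}\leqslant |\!|\!|s_\ast^\varepsilon-s_\ast^0|\!|\!|+\Delta\|v^\varepsilon-v^0\|_{H^1(\Omega)}$ using the Lipschitz bound for $s_\ast^0$. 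Combining with $|\!|\!|s_\ast^\varepsilon-s_\ast^0|\!|\!|\leqslant C\delta(\varepsilon)|\log(\delta(\varepsilon))|$ from the invariant manifold theorem and the shadowing bound gives $\tn{dist}_H(\mathcal{A}_\varepsilon,\mathcal{A}_0)\leqslant C\delta(\varepsilon)|\log(\delta(\varepsilon))|$; the symmetric estimate $\tn{dist}_H(\mathcal{A}_0,\mathcal{A}_\varepsilon)$ is obtained the same way, hence the bound on $\tn{d}_H$.

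The main obstacle I expect is justifying that the shadowing/structural-stability construction of Appendix \ref{SecB} propagates the perturbation size linearly (up to the logarithmic factor) rather than with a Hölder loss: this needs that $\tilde T_0$ be Morse--Smale with hyperbolic periodic orbits and transverse stable/unstable intersections, that these properties persist under $C^1$-small perturbations, and that the conjugating homeomorphism depends Lipschitz-continuously on the $C^0$-distance of the maps. Granting the quantitative statements of Appendix \ref{SecB}, the remaining work is the routine triangle-inequality bookkeeping with the already-established convergence rates for $s_\ast^\varepsilon$ and $\tilde T_\varepsilon$.
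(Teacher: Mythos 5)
Your proposal is correct and follows essentially the same route as the paper's own proof: reduce to the finite-dimensional time-one maps $\tilde T_\varepsilon$ on $\mathbb{R}^M$, invoke the Lipschitz shadowing result (Theorem \ref{Shadowing_reaction_diffusion}, via Proposition \ref{prop_estimate_LPSP}) together with $\|\tilde T_\varepsilon-\tilde T_0\|_{L^\infty}\leqslant C\delta(\varepsilon)|\log\delta(\varepsilon)|$ to control the Hausdorff distance between the projected attractors, and then lift back to $H^1(\Omega)$ using the Lipschitz bound on $s_\ast^0$ and $|\!|\!|s_\ast^\varepsilon-s_\ast^0|\!|\!|\leqslant C\delta(\varepsilon)|\log\delta(\varepsilon)|$. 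The only cosmetic difference is that the paper writes each attractor point as $u^\varepsilon = T_\varepsilon(1)w^\varepsilon$ and decomposes $w^\varepsilon = Q_\varepsilon w^\varepsilon + s_\ast^\varepsilon(Q_\varepsilon w^\varepsilon)$, whereas you work directly with $w^\varepsilon = v^\varepsilon + s_\ast^\varepsilon(v^\varepsilon)$; and you phrase the shadowing step loosely as producing a "conjugacy," while what Appendix \ref{SecB} actually supplies is the Lipschitz shadowing property (which bounds Hausdorff distance of attractors without constructing a conjugating homeomorphism) — but this is exactly the tool the paper uses, so the argument is sound.
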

\begin{proof} For each $\varepsilon\in [0,\varepsilon_0]$ we denote $T_\varepsilon=T_\varepsilon(1)$. Given $u^\varepsilon\in \mathcal{A}_\varepsilon$, by invariance there is $w^\varepsilon\in \mathcal{A}_\varepsilon$ such that $u^\varepsilon=T_\varepsilon w^\varepsilon$ so we can write 
$
w^\varepsilon=Q_\varepsilon w^\varepsilon + s_\ast^\varepsilon(Q_\varepsilon w^\varepsilon),
$
where $Q_\varepsilon w^\varepsilon \in \bar{\mathcal{A}_\varepsilon}$ with $\bar{\mathcal{A}_\varepsilon}=Q_\varepsilon\mathcal{A}_\varepsilon$ the projected attractor in $\mathbb{R}^M$. Thus
\[ \begin{split}
\|u^\varepsilon-u^0\|_{H^1(\Omega)} &=\|T_\varepsilon w^\varepsilon-T_0 w^0\|_{H^1(\Omega)}\leqslant \|T_\varepsilon w^\varepsilon-T_\varepsilon w^0\|_{H^1(\Omega)}+\|T_\varepsilon w^0-T_0 w^0\|_{H^1(\Omega)}\\
& \leqslant C\delta(\varepsilon)|\log(\delta(\varepsilon))|.
\end{split} \]
But 
\[
\begin{split}
&\|w^\varepsilon-w^0\|=\|Q_\varepsilon w^\varepsilon-Q_0 w^0\|_{\mathbb{R}^M}+\|s_\ast^\varepsilon(Q_\varepsilon w^\varepsilon)-s_\ast^0(Q_0 w^0)\|_{H^1(\Omega)}\\
&\leqslant \|Q_\varepsilon w^\varepsilon-Q_0 w^0\|_{\mathbb{R}^M}+\|s_\ast^\varepsilon(Q_\varepsilon w^\varepsilon)-s_\ast^\varepsilon(Q_0 w^0)\|_{H^1(\Omega)}+\|s_\ast^\varepsilon(Q_0 w^0)-s_\ast^0(Q_0 w^0)\|_{H^1(\Omega)}\\
&\leqslant C \|Q_\varepsilon w^\varepsilon-Q_0 w^0\|_{\mathbb{R}^M}+C\delta(\varepsilon)|\log(\delta(\varepsilon))|,
\end{split}
\]
which implies 
$$
\tn{d}_H(\mathcal{A}_\varepsilon,\mathcal{A}_0)\leqslant \tn{d}_H(\bar{\mathcal{A}_\varepsilon},\bar{\mathcal{A}_0})+C\delta(\varepsilon)|\log(\delta(\varepsilon))|.
$$
The result follows by \eqref{estimate_one_map} and Theorem \ref{Shadowing_reaction_diffusion}. 
\end{proof}

\appendix

\section{General theory of rate of convergence of attractors}\label{SecA}
Here we summarize some results of \cite{Carvalho2010}.

Let $\{T_\varepsilon(\cdot)\}_{\varepsilon\in \Lambda}$ be a family of semigroups  on a Banach space $X$, where $\Lambda$ is a topology space. We will assume that the family  $\{T_\varepsilon(\cdot)\}_{\varepsilon\in \Lambda}$ converges in some appropriate sense to $T_{\varepsilon_0}(\cdot)$ as $\varepsilon\to \varepsilon_0$ in $\Lambda$. 

\begin{defi}
If $T_\varepsilon(\cdot)$ is a family of semigroups with attractors $\mathcal{A}_\varepsilon$, then we say that $\{\mathcal{A}_\varepsilon\}_{\varepsilon\in \Lambda}$ is equi-attracting if 
$$
\sup_{\varepsilon\in\Lambda}\tn{dist}_H(T_\varepsilon(t)B,\mathcal{A}_\varepsilon)\overset{t\to \infty}\longrightarrow 0,
$$
for each bounded subset $B$ of $X$.
\end{defi}

We now see that equi-attraction implies the continuity of attractors, we can also obtain a rate of convergence of attractors concerning the underlying parameters.

\begin{theo}\label{equi_attraction}
Let $T_\varepsilon(\cdot)$ be a family of semigoups with attractors $\mathcal{A}_\varepsilon$ and set $D=\cup_{\varepsilon\in\Lambda} \mathcal{A}_\varepsilon$. If there is a strictly decreasing function $\Theta:[t_0,\infty)\to (0,\infty)$ such that 
$$
\sup_{\varepsilon\in\Lambda}\tn{dist}_H(T_\varepsilon(t)D,\mathcal{A}_\varepsilon)\leqslant \Theta(t),\quad t\geqslant t_0
$$ 
and, there are $C,L>0$ and a function $\delta(\varepsilon)\to 0$ as $\varepsilon\to \varepsilon_0$ such that
$$
\tn{dist}_H(T_\varepsilon(t)x,T_0(t)y)\leqslant Ce^{Lt}(\|x-y\|_X+\delta(\varepsilon)),\quad t\geqslant t_0,\,\,\,x,y\in D.
$$
Then
$$
\tn{d}_H(\mathcal{A}_\varepsilon,\mathcal{A}_0)\leqslant \min_{\nu\in \Theta([0,\infty))} 2\{Ce^{L\Theta^{-1}(\nu)}\delta(\varepsilon)+\nu)\}.
$$
\end{theo}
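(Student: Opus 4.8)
The plan is to prove, for each fixed time $t\geqslant t_0$, a bound of the form $\tn{d}_H(\mathcal{A}_\varepsilon,\mathcal{A}_0)\leqslant Ce^{Lt}\delta(\varepsilon)+\Theta(t)$, and then to optimize in the free parameter $t$. The fixed-time bound is obtained by combining the three ingredients supplied by the hypotheses: the invariance $T_\varepsilon(t)\mathcal{A}_\varepsilon=\mathcal{A}_\varepsilon$ of the global attractors, the uniform attraction rate $\tn{dist}_H(T_\varepsilon(t)D,\mathcal{A}_\varepsilon)\leqslant\Theta(t)$, and the finite-time continuity estimate $\tn{dist}_H(T_\varepsilon(t)x,T_0(t)y)\leqslant Ce^{Lt}(\|x-y\|_X+\delta(\varepsilon))$.

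First I would bound $\tn{dist}_H(\mathcal{A}_\varepsilon,\mathcal{A}_0)$. Fix $t\geqslant t_0$ and pick $a\in\mathcal{A}_\varepsilon$. By invariance there is $b\in\mathcal{A}_\varepsilon\subseteq D$ with $a=T_\varepsilon(t)b$. Applying the continuity estimate with $x=y=b$ gives $\|a-T_0(t)b\|_X\leqslant Ce^{Lt}\delta(\varepsilon)$. Since $T_0(t)b\in T_0(t)D$ and, applying the attraction hypothesis with index $0$, $\tn{dist}_H(T_0(t)D,\mathcal{A}_0)\leqslant\Theta(t)$, for every slack $\eta'>0$ there is $c\in\mathcal{A}_0$ with $\|T_0(t)b-c\|_X\leqslant\Theta(t)+\eta'$. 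The triangle inequality then gives $\tn{dist}_H(a,\mathcal{A}_0)\leqslant Ce^{Lt}\delta(\varepsilon)+\Theta(t)+\eta'$; letting $\eta'\to 0$ and taking the supremum over $a\in\mathcal{A}_\varepsilon$ yields $\tn{dist}_H(\mathcal{A}_\varepsilon,\mathcal{A}_0)\leqslant Ce^{Lt}\delta(\varepsilon)+\Theta(t)$. The reverse semidistance is handled by the mirror argument: write $a=T_0(t)b$ with $b\in\mathcal{A}_0\subseteq D$, compare $T_0(t)b$ with $T_\varepsilon(t)b$ through the same continuity estimate with equal arguments, and use $\tn{dist}_H(T_\varepsilon(t)D,\mathcal{A}_\varepsilon)\leqslant\Theta(t)$. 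Taking the maximum of the two semidistances, $\tn{d}_H(\mathcal{A}_\varepsilon,\mathcal{A}_0)\leqslant Ce^{Lt}\delta(\varepsilon)+\Theta(t)$ for every $t\geqslant t_0$.

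It then remains to optimize over $t$. Since $\Theta$ is strictly decreasing it is invertible on its range; writing $\nu=\Theta(t)$, equivalently $t=\Theta^{-1}(\nu)$, the previous inequality becomes $\tn{d}_H(\mathcal{A}_\varepsilon,\mathcal{A}_0)\leqslant Ce^{L\Theta^{-1}(\nu)}\delta(\varepsilon)+\nu$ for each admissible $\nu$, and minimizing over $\nu\in\Theta([t_0,\infty))$ produces the asserted estimate (the factor $2$ and the enlarged index set appearing in the statement being a harmless overestimate).

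I do not expect a genuine obstacle: the argument is soft, and the only points requiring care are routine — passing from a bound on a Hausdorff semidistance to the existence of an approximating element and letting the slack $\eta'$ tend to $0$, and checking that $T_0(t)b$ (resp. $T_\varepsilon(t)b$) lies in $T_0(t)D$ (resp. $T_\varepsilon(t)D$), which is immediate from $\mathcal{A}_\varepsilon\subseteq D$. The structurally essential hypothesis is the strict monotonicity of $\Theta$, without which the substitution $\nu=\Theta(t)$ and hence the final optimized form would be unavailable; the exponential factor $e^{Lt}$ is what makes the optimization nontrivial, since one cannot simply send $t\to\infty$.
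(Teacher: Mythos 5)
Your argument is correct and is essentially the paper's own proof: the paper bounds $\tn{dist}_H(\mathcal{A}_\varepsilon,\mathcal{A}_0)\leqslant \tn{dist}_H(T_\varepsilon(t)\mathcal{A}_\varepsilon,T_0(t)\mathcal{A}_\varepsilon)+\tn{dist}_H(T_0(t)\mathcal{A}_\varepsilon,\mathcal{A}_0)\leqslant Ce^{Lt}\delta(\varepsilon)+\Theta(t)$ using invariance, the continuity estimate with equal arguments, and equi-attraction, then substitutes $\nu=\Theta(t)$, exactly as you do (you merely spell it out pointwise and treat the reverse semidistance explicitly, which is where the harmless factor $2$ comes from). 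No gaps.
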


\begin{proof}
For $t\geqslant t_0$ we have 
\[
\begin{split}
\tn{dist}_H(\mathcal{A}_\varepsilon,\mathcal{A}_0) & \leqslant \tn{dist}_H(T_\varepsilon(t)\mathcal{A}_\varepsilon,T_0(t)\mathcal{A}_\varepsilon)+\tn{dist}_H(T_0(t)\mathcal{A}_\varepsilon,\mathcal{A}_0)\\
&\leqslant \sup_{x\in\mathcal{A}_\varepsilon}\tn{dist}_H(T_\varepsilon(t)x,T_0(t)x)+\tn{dist}_H(T_0(t)\mathcal{A}_\varepsilon,\mathcal{A}_0)\\
&\leqslant Ce^{L t}\delta(\varepsilon)+\Theta(t).
\end{split}
\]
Given $\nu \in \Theta([0,\infty))$ and $t=\Theta^{-1}(\nu)$, we get
\[
\tn{dist}_H(\mathcal{A}_\varepsilon,\mathcal{A}_0)\leqslant Ce^{L \Theta^{-1}(\nu) }\delta(\varepsilon)+\nu. 
\] 
\end{proof}

In particular, if the equi-attraction is of exponential order, then we have an optimal rate of convergence. 

\begin{cor}\label{equi_attraction_continuity}
Assume that the conditions of Theorem \ref{equi_attraction} are satisfied with $\Theta(t)=ce^{-\gamma t}$ for some $c\geqslant 1$, $\gamma>0$ and for all $t\in [t_0,\infty)$. Then there is  constant $\bar{c}>0$ independent of $\varepsilon$, such that 
$$
\tn{d}_H(\mathcal{A}_\varepsilon,\mathcal{A}_0)\leqslant \bar{c}\delta(\varepsilon)^{\frac{\gamma}{\gamma+L}}. 
$$ 
\end{cor}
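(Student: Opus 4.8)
The plan is to specialise the general estimate of Theorem \ref{equi_attraction} to the exponential gauge $\Theta(t)=ce^{-\gamma t}$ and then carry out the resulting elementary one-variable optimisation explicitly. Since $\Theta\colon[t_0,\infty)\to(0,ce^{-\gamma t_0}]$ is a strictly decreasing bijection with inverse $\Theta^{-1}(\nu)=\frac{1}{\gamma}\log(c/\nu)$, we have $e^{L\Theta^{-1}(\nu)}=(c/\nu)^{L/\gamma}$, so the conclusion of Theorem \ref{equi_attraction} reads
$$
\tn{d}_H(\mathcal{A}_\varepsilon,\mathcal{A}_0)\leqslant 2\min_{0<\nu\leqslant ce^{-\gamma t_0}}\Big(C\,c^{L/\gamma}\,\delta(\varepsilon)\,\nu^{-L/\gamma}+\nu\Big).
$$

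Next I would minimise $g(\nu):=A\,\nu^{-L/\gamma}+\nu$ over $\nu>0$, where $A:=C\,c^{L/\gamma}\,\delta(\varepsilon)$. Solving $g'(\nu)=0$ gives the critical point $\nu_\ast=\big(\tfrac{L}{\gamma}A\big)^{\gamma/(\gamma+L)}$, which is a fixed constant multiple of $\delta(\varepsilon)^{\gamma/(\gamma+L)}$; substituting this value back, the two terms $A\nu_\ast^{-L/\gamma}$ and $\nu_\ast$ turn out to be comparable, and one gets $g(\nu_\ast)=\bar c_0\,\delta(\varepsilon)^{\gamma/(\gamma+L)}$ for a constant $\bar c_0$ depending only on $C,L,\gamma,c$. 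This already yields the claimed bound with $\bar c=2\bar c_0$, which is manifestly independent of $\varepsilon$.

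The only point requiring attention is the admissibility of $\nu_\ast$: the minimisation in Theorem \ref{equi_attraction} runs over the range $(0,ce^{-\gamma t_0}]$ of $\Theta$, and $\nu_\ast$ lies there precisely once $\delta(\varepsilon)$ is small enough, which is exactly the asymptotic regime $\varepsilon\to\varepsilon_0$ that the statement concerns, so for all such $\varepsilon$ the argument closes. If instead one wants an inequality valid for every $\varepsilon\in\Lambda$, it suffices to add the observation that $D=\bigcup_{\varepsilon}\mathcal{A}_\varepsilon$ is bounded, so that $\tn{d}_H(\mathcal{A}_\varepsilon,\mathcal{A}_0)\leqslant\tn{diam}(D)$ trivially handles the complementary range where $\delta(\varepsilon)$ stays bounded away from $0$, at the cost of enlarging $\bar c$. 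I do not expect any real obstacle here: the content is entirely in the elementary optimisation of the second paragraph, and the only thing to be careful about is keeping $\bar c$ built solely from $C,L,\gamma,c$ so that it is uniform in $\varepsilon$.
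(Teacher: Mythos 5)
Your proof is correct and follows essentially the same route as the paper: invert $\Theta$, plug into the bound of Theorem \ref{equi_attraction}, and optimise the one-variable expression $A\nu^{-L/\gamma}+\nu$ at the critical point $\nu_\ast=(\tfrac{L}{\gamma}A)^{\gamma/(\gamma+L)}$. You are in fact a little more careful than the paper at one spot: the range of $\Theta$ on $[t_0,\infty)$ is $(0,\,ce^{-\gamma t_0}]$, and you correctly flag that $\nu_\ast$ lies in this range only once $\delta(\varepsilon)$ is small (fixing the complementary regime by the boundedness of $D$), whereas the paper's proof writes the minimisation over $(0,c]$ and leaves this admissibility point implicit.
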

\begin{proof}
Take $\Theta^{-1}(\nu)=\log(\frac{c}{\nu})^{\frac{1}{\gamma}}$, we obtain
\begin{equation}\label{eq.est.equi.attraction}
\tn{d}_H(\mathcal{A}_\varepsilon,\mathcal{A}_0)\leqslant 2\min_{\nu\in (0,c]}\Big[\delta(\varepsilon)\Big(\frac{c}{\nu}\Big)^\frac{L}{\gamma}+\nu\Big].
\end{equation}
The minimum of the right-hand side of \eqref{eq.est.equi.attraction} occurs when $\nu=c(\frac{L}{\gamma})^\frac{\gamma}{\gamma+L}\delta(\varepsilon)^\frac{\gamma}{\gamma+L}$. Since the left-hand size of \eqref{eq.est.equi.attraction} is independent of $\nu$, it follows that
$$
\tn{d}_H(\mathcal{A}_\varepsilon,\mathcal{A}_0)\leqslant 2c\Big[ \Big(\frac{L}{\gamma}\Big)^\frac{-L}{\gamma+L}+\Big(\frac{L}{\gamma}\Big)^\frac{\gamma}{\gamma+L}\Big]\delta(\varepsilon)^\frac{\gamma}{\gamma+L}.
$$
\end{proof}

\begin{example}
Consider the following problem 
\[
\begin{cases}
\varepsilon\dfrac{d^2x}{dt^2}+\dfrac{dx}{dt}=\mu x+f(x),\quad x\in\mathbb{R}^N\\ x(0)=x_0\in\mathbb{R}^N\\ \dfrac{dx}{dt}(0)=v_0\in\mathbb{R}^N.
\end{cases}
\]
Assume that $\varepsilon\in [0,1]$, $\mu\geqslant 1$, $f:\mathbb{R}^N\to\mathbb{R}^N$ is a $C^1$-function which is globally Lipschitz, globally bounded and with symmetric Jacobian matrix at every point. If we write the above equation in the form of a system with variables $x$ and $v=\varepsilon\frac{dx}{dt}$ we have 
\begin{equation}\label{second_ode}
\begin{cases}
\frac{d}{dt}\Big[\begin{smallmatrix} x \\v \end{smallmatrix}\Big]=\Big[\begin{smallmatrix} 0 & \frac{1}{\varepsilon}I \\ -\mu I & -\frac{1}{\varepsilon}I \end{smallmatrix}\Big]\Big[\begin{smallmatrix} x \\v \end{smallmatrix}\Big]+\Big[\begin{smallmatrix} 0 \\f(x) \end{smallmatrix}\Big],\quad \Big[\begin{smallmatrix} x \\v \end{smallmatrix}\Big]\in \mathbb{R}^N\times\mathbb{R}^N, \\ \Big[\begin{smallmatrix} x \\v \end{smallmatrix}\Big](0)=\Big[\begin{smallmatrix} x_0 \\v_0 \end{smallmatrix}\Big]\in \mathbb{R}^N\times\mathbb{R}^N.
\end{cases}
\end{equation}
The solutions of \eqref{second_ode} are globally defined and the solution operator family $T_\varepsilon(\cdot)$ defines a semigroup in $Z=\mathbb{R}^N\times\mathbb{R}^N$ which has  attractor $\mathcal{A}_\varepsilon$. The problem \eqref{second_ode} can be rewritten as  
$$
\begin{cases}
\frac{d}{dt} \Big[\begin{smallmatrix} I & I \\ -\varepsilon\mu I & 0 \end{smallmatrix}\Big]\Big[\begin{smallmatrix} x \\v \end{smallmatrix}\Big]=-\Big[\begin{smallmatrix} x \\v \end{smallmatrix}\Big]+\Big[\begin{smallmatrix} I & I \\ -\varepsilon\mu I & 0\end{smallmatrix}\Big] \Big[\begin{smallmatrix} 0 \\f(x) \end{smallmatrix}\Big],\quad \Big[\begin{smallmatrix} x \\v \end{smallmatrix}\Big]\in \mathbb{R}^N\times\mathbb{R}^N, \\ \Big[\begin{smallmatrix} x \\v \end{smallmatrix}\Big](0)=\Big[\begin{smallmatrix} x_0 \\ \varepsilon v_0 \end{smallmatrix}\Big]\in \mathbb{R}^N\times\mathbb{R}^N.
\end{cases}
$$

Since $\varepsilon$ goes to zero, one would expect that the dynamical properties of \eqref{second_ode} are given by
$$
\begin{cases}
\frac{d}{dt} \Big[\begin{smallmatrix} I & I \\ 0 & 0 \end{smallmatrix}\Big]\Big[\begin{smallmatrix} x \\v \end{smallmatrix}\Big]=-\Big[\begin{smallmatrix} x \\ v \end{smallmatrix}\Big]+\Big[\begin{smallmatrix} I & I \\ 0 & 0 \end{smallmatrix}\Big] \Big[\begin{smallmatrix} 0 \\f(x) \end{smallmatrix}\Big],\quad \Big[\begin{smallmatrix} x \\ v \end{smallmatrix}\Big]\in \mathbb{R}^N\times\mathbb{R}^N, \\ \Big[\begin{smallmatrix} x \\v \end{smallmatrix}\Big](0)=\Big[\begin{smallmatrix} x_0 \\ \varepsilon v_0 \end{smallmatrix}\Big]\in \mathbb{R}^N\times\mathbb{R}^N
\end{cases}
$$
which corresponds to $v=0$ and
\begin{equation}\label{second_ode_ode}
\begin{cases}
\dfrac{dx}{dt}=-\mu x+f(x)\\ x(0)=x_0\in \mathbb{R}^N.
\end{cases}
\end{equation} 
Notice that the solutions for \eqref{second_ode_ode} are globally defined and the solution operator family $R _0(\cdot)$ defines a semigroup in $\mathbb{R}^N$. To compare the dynamics of these two problems we should find a way to see the dynamics of \eqref{second_ode_ode} in $Z$. That is done simply by defining
$$
T_0(t)\Big[\begin{smallmatrix} x_0 \\ v_0 \end{smallmatrix}\Big]=\Big[\begin{smallmatrix} R_0(t)x_0\\  0 \end{smallmatrix}\Big],\quad t>0, \quad T_0(0)=I,
$$
and noting that $T_0(\cdot)$ is a semigroup (singular at zero) with attractor $\mathcal{A}_0$.
We can prove (see \cite{Carvalho2010}) that there are constants $\varepsilon_0>0$, $\gamma>0$, $t_0>0$ and $C>0$ independent of $\varepsilon$ such that  
$$
\tn{dist}_H\Big(T_\varepsilon(t)\bigcup_{\varepsilon\in [0,\varepsilon_0]}\mathcal{A}_\varepsilon,\mathcal{A}_\varepsilon\Big)\leqslant Ce^{-\gamma t},\quad t\geqslant t_0
$$ 
and for any $\bar{x},\bar{y}\in Z$,
$$
\|T_\varepsilon(t)\bar{x}-T_0(t)\bar{y}\|_Z\leqslant Ce^{L t},\|\bar{x}-\bar{y}\|_Z+\varepsilon^\alpha,\quad t\geqslant t_0,\quad \alpha<1.
$$ 

Then by Corollary \ref{equi_attraction_continuity}
$$
\tn{d}_H(\mathcal{A}_\varepsilon,\mathcal{A}_0)\leqslant C \varepsilon^{\frac{\alpha\gamma}{\gamma+L}}.
$$
\end{example}

\section{Shadowing theory and rate of convergence of attractors}\label{SecB}

In this section we make a brief overview of some results of Shadowing Theory presented in \cite{Santamaria2017} and \cite{Santamaria2013}. This theory enables us to estimate the convergence of attractors by the continuity of nonlinear semigroups. Since the semigroups are given by the variation of constants formula we can use the finite dimension to obtain estimates for the linear semigroup and then estimate the continuity of nonlinear semigroups.

We observe that the arguments work in finite dimension,  therefore applying the techniques used here in more general problems where the dynamics can not be described by an ODE it seems not capable of working successfully.

Let $T:\mathbb{R}^N\to \mathbb{R}^N$ be a continuous function. Recall that the discrete dynamical system generated by $T$ is defined by $T^0=I_{\mathbb{R}^N}$ and, for $k\in\mathbb{N}$, $T^k=T\circ\dots\circ T$ is the $k$th iterate of $T$. The notions of Morse-Smale systems, hyperbolic fixed points, and stable and unstable manifolds for a function $T$ are similar to the continuous case (see \cite{Hale1988}).  

\begin{defi}
A trajectory (or global solution) of the discrete dynamical system generated by $T$ is a sequence $\{x_n\}_{n\in\mathbb{Z}}\subset \mathbb{R}^N$, such that, $x_{n+1}=T(x_n)$, for all $n\in\mathbb{Z}$. 
\end{defi}

\begin{defi}
We say that a sequence $\{x_n\}_{n\in\mathbb{Z}}$ is a $\delta$-pseudo-trajectory of $T$ if 
$$
\|Tx_x-x_{k+1}\|_{\mathbb{R}^N}\leqslant\delta,\ \tn{for all}\ n\in\mathbb{Z}. 
$$
\end{defi}

\begin{defi}
We say that a point $x\in \mathbb{R}^N$ $\varepsilon$-shadows a $\delta$-pseudo-trajectory $\{x_k\}$ on $U\subset \mathbb{R}^N$ if the following inequality holds
\[
\|T^kx-x_k\|_{\mathbb{R}^N}\leqslant \varepsilon,\ \tn{for all}\  k\in\mathbb{Z},
\]
\end{defi}

\begin{defi}
The map $T$ has the Lipschitz Shadowing Property (LpSP) on $U\subset \mathbb{R}^N$, if there are constants $L, \delta_0>0$ such that for any $0< \delta\leqslant \delta_0 $, any $\delta$-pseudo-trajectory of $T$ in $U$ is $(L\delta)$-shadowed by a trajectory of $T$ in $\mathbb{R}^N$, i.e., for any sequence $\{x_k\}_k\subset U\subset \mathbb{R}^N$ with
$$
\|Tx_k-x_{k+1}\|_{\mathbb{R}^N}\leqslant \delta\leqslant \delta_0,\quad k\in\mathbb{Z},
$$
there is a point $x\in X$ such that the following inequality holds
$$
\|T^kx-x_k\|_{\mathbb{R}^N}\leqslant L\delta,\quad k\in\mathbb{Z}.
$$
\end{defi}

Let $T:\mathbb{R}^M\to \mathbb{R}^M$ be a Morse Smale function which has attractor $\mathcal{A}$. Since $\mathcal{A}$ is compact and has all dynamics of the system, we can restrict our attention to a neighborhood $\mathcal{N}(\mathcal{A})$ of $\mathcal{A}$, thus we consider the space $C^1(\mathcal{N}(\mathcal{A}),\mathbb{R}^M)$ with the $C^1$-topology.

The next result can be found in \cite{Santamaria2014}. It describes an application of Shadowing Theory to the rate of convergence of attractors.

\begin{prop}\label{prop_estimate_LPSP}
Let $T_1, T_2:X\to X$ be maps which has  global attractors $\mathcal{A}_1,\mathcal{A}_2$. Assume that $\mathcal{A}_1,\mathcal{A}_2\subset \mathcal{U}\subset X$, that $T_1,T_2$ have both the LpSP on $\mathcal{U},$ with parameters $L,\delta_0$ and $\|T_1-T_2\|_{\mathcal{L}^\infty(\mathcal{U},X)}\leqslant \delta$. Then we have 
$$
\tn{dist}_H(\mathcal{A}_1,\mathcal{A}_2)\leqslant \|T_1-T_2\|_{\mathcal{L}^\infty(\mathcal{U},X)}.
$$
\end{prop}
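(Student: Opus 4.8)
The plan is to use the invariance of $\mathcal{A}_1$ under $T_1$ to build, out of an arbitrary point of $\mathcal{A}_1$, a pseudo‑trajectory of $T_2$, and then to shadow it by a genuine $T_2$‑orbit which, being bounded, must lie inside $\mathcal{A}_2$. The distance from the chosen point of $\mathcal{A}_1$ to $\mathcal{A}_2$ is then controlled by the shadowing constant times $\delta=\|T_1-T_2\|_{\mathcal{L}^\infty(\mathcal{U},X)}$.

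Concretely, I would first fix $a\in\mathcal{A}_1$. Since $\mathcal{A}_1$ is the global attractor of $T_1$ it is invariant, so there is a global trajectory $\{x_n\}_{n\in\mathbb{Z}}\subset\mathcal{A}_1$ of $T_1$ with $x_0=a$ and $x_{n+1}=T_1(x_n)$. Because $\mathcal{A}_1\subset\mathcal{U}$ this sequence lies in $\mathcal{U}$, and
\[
\|T_2(x_n)-x_{n+1}\|_X=\|T_2(x_n)-T_1(x_n)\|_X\leqslant\|T_1-T_2\|_{\mathcal{L}^\infty(\mathcal{U},X)}=:\delta,\quad n\in\mathbb{Z},
\]
so (after possibly shrinking the perturbation so that $\delta\leqslant\delta_0$) $\{x_n\}$ is a $\delta$‑pseudo‑trajectory of $T_2$ in $\mathcal{U}$. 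The LpSP of $T_2$ on $\mathcal{U}$ then produces a point $y\in X$ with $\|T_2^k y-x_k\|_X\leqslant L\delta$ for all $k\in\mathbb{Z}$. Since $\{x_k\}\subset\mathcal{A}_1$ is bounded, the full orbit $\{T_2^k y\}_{k\in\mathbb{Z}}$ is a bounded global trajectory of $T_2$; by the characterization of the global attractor as the union of all bounded global solutions, $\{T_2^k y\}_{k\in\mathbb{Z}}\subset\mathcal{A}_2$, and in particular $y=T_2^0 y\in\mathcal{A}_2$. Hence
\[
\inf_{b\in\mathcal{A}_2}\|a-b\|_X\leqslant\|a-y\|_X=\|x_0-T_2^0 y\|_X\leqslant L\delta,
\]
and taking the supremum over $a\in\mathcal{A}_1$ gives $\tn{dist}_H(\mathcal{A}_1,\mathcal{A}_2)\leqslant L\,\|T_1-T_2\|_{\mathcal{L}^\infty(\mathcal{U},X)}$, i.e. the asserted estimate up to the (fixed) shadowing constant $L$.

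The step I expect to be the main obstacle is the verification that the shadowing orbit $\{T_2^k y\}$ genuinely sits in $\mathcal{A}_2$. This requires, on the one hand, that the pseudo‑trajectory never leaves the region $\mathcal{U}$ on which the LpSP is granted — here this is free because $\mathcal{A}_1\subset\mathcal{U}$ and the trajectory stays in $\mathcal{A}_1$ — and, on the other hand, that the bounded full $T_2$‑orbit obtained from shadowing is actually absorbed by the attractor, which uses the standard structure of global attractors for the maps under consideration. The remaining manipulations are routine bookkeeping with the Hausdorff semidistance; an entirely symmetric argument (exchanging the roles of $T_1$ and $T_2$) would upgrade the semidistance to the symmetric distance $\tn{d}_H$ when both maps have the LpSP with the same parameters, as assumed.
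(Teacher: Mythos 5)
Your argument is correct and is essentially the paper's own proof with the roles of $T_1$ and $T_2$ interchanged: the paper views a $T_2$-trajectory in $\mathcal{A}_2$ as a $\delta$-pseudo-trajectory of $T_1$ and shadows it by a bounded complete $T_1$-orbit lying in $\mathcal{A}_1$, while you shadow a $T_1$-trajectory in $\mathcal{A}_1$ by a $T_2$-orbit in $\mathcal{A}_2$; both directions are available since each map has the LpSP. Your conclusion with the constant $L$, i.e. $\tn{dist}_H(\mathcal{A}_1,\mathcal{A}_2)\leqslant L\|T_1-T_2\|_{\mathcal{L}^\infty(\mathcal{U},X)}$, is in fact the intended estimate (the missing $L$ in the stated proposition is a misprint, as Theorem \ref{Shadowing_reaction_diffusion} confirms).
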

\begin{proof}
Since $T_1$ and $T_2$ has the LpSP on $\mathcal{U}$. Take a trajectory $\{y_n\}_n$ of $T_2$ in $ \mathcal{A}_2$, then $\{y_n\}_n$ is a $\delta$-pesudo-trajectory of $T_1$ with $\delta=\|T_1-T_2\|_{\mathcal{L}^\infty(\mathcal{U},X)}\leqslant \delta_0$. By LpSP there is a trajectory $\{x_n\}_n\subset X$ of $T_1$ such that $\|x_n-y_n\|_X\leqslant L\delta$ for al $n\in\mathbb{Z}$, hence $\{x_n\}_n\subset \mathcal{A}_1$. Since $\{y_n\}_n$ is arbitrary the result follows. 
\end{proof}

As a consequence of Proposition \ref{prop_estimate_LPSP}, we have the following result.

\begin{theo}\label{Shadowing_reaction_diffusion}
Let $T:\mathbb{R}^M\to \mathbb{R}^M$ be a Morse-Smale function with attractor $\mathcal{A}$. Then there exist constant $L>0$, a neighborhood $\mathcal{N}(\mathcal{A})$ of $\mathcal{A}$ and a neighborhood $\mathcal{N}(T)$ of $T$ in the $C^1(\mathcal{N}(\mathcal{A}),\mathbb{R}^M)$ topology such that, for any $T_1,T_2\in \mathcal{N}(T)$ with attractors  $\mathcal{A}_1$, $\mathcal{A}_2$, respectively, we have
$$
\tn{dist}_H(\mathcal{A}_1,\mathcal{A}_2)\leqslant L\|T_1-T_2\|_{L^\infty(\mathcal{N}(\mathcal{A}),\mathbb{R}^M)}.
$$
\end{theo}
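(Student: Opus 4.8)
The plan is to derive Theorem \ref{Shadowing_reaction_diffusion} from Proposition \ref{prop_estimate_LPSP} once we have checked that an entire $C^1$-neighborhood of the Morse--Smale map $T$ enjoys the Lipschitz Shadowing Property on one fixed neighborhood of the attractor, with constants $L$ and $\delta_0$ that do not depend on the particular perturbation. So the proof splits into a ``setup'' part (choosing the neighborhoods), a ``hard'' part (uniform LpSP), and a short ``conclusion'' part (applying Proposition \ref{prop_estimate_LPSP}).

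First I would fix a bounded open neighborhood $\mathcal{N}(\mathcal{A})$ of $\mathcal{A}$ that is positively invariant and absorbing for $T$; such a set exists since $\mathcal{A}$ is the global attractor of $T$ in $\mathbb{R}^M$. Shrinking the $C^1$-neighborhood $\mathcal{N}(T)$ if necessary, the same set remains absorbing for every $T'\in\mathcal{N}(T)$, so each such $T'$ has a global attractor $\mathcal{A}'\subset\mathcal{N}(\mathcal{A})$. The chain recurrent set of the Morse--Smale map $T$ is a finite collection of hyperbolic fixed points with no cycles, and the hyperbolicity data (splittings, contraction/expansion rates, bounds on $(DT)^{\pm1}$) persist under small $C^1$-perturbations; hence after further shrinking $\mathcal{N}(T)$, every $T'\in\mathcal{N}(T)$ is again Morse--Smale on $\mathcal{N}(\mathcal{A})$ with a finite hyperbolic non-cyclic recurrent set and with hyperbolicity constants bounded uniformly in $T'$.

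The heart of the argument is then to produce $L,\delta_0>0$ such that every $T'\in\mathcal{N}(T)$ has the Lipschitz Shadowing Property on $\mathcal{N}(\mathcal{A})$ with these same constants. Locally, near each hyperbolic fixed point one has the quantitative shadowing lemma for hyperbolic sets, with shadowing and pseudo-orbit constants depending only on the splitting and on the norms of $(DT')^{\pm1}$, hence uniform over $\mathcal{N}(T)$. Because the Morse--Smale structure has no cycles, a $\delta$-pseudo-trajectory spends only a uniformly bounded number of iterates in transit between the finitely many local neighborhoods of the fixed points; splicing the local shadowing orbits across these finitely many transitions---exactly as in the proof of structural stability of Morse--Smale systems, in its Lipschitz (quantitative) version---produces a genuine trajectory of $T'$ that $(L\delta)$-shadows the given pseudo-trajectory, with $L,\delta_0$ independent of $T'$. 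This is the step where the finite dimension $M$ and the Morse--Smale hypothesis are genuinely used, and it is the main obstacle: it amounts to a perturbation-robust, quantitative restatement of the Palis--Smale structural stability theorem. If one prefers, this uniform LpSP can instead be invoked directly from \cite{Santamaria2013,Santamaria2014}, in which case only the last paragraph remains.

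Finally, given $T_1,T_2\in\mathcal{N}(T)$, set $\delta=\|T_1-T_2\|_{L^\infty(\mathcal{N}(\mathcal{A}),\mathbb{R}^M)}$; shrinking $\mathcal{N}(T)$ so that $\delta\leqslant\delta_0$, any trajectory of $T_2$ contained in $\mathcal{A}_2\subset\mathcal{N}(\mathcal{A})$ is a $\delta$-pseudo-trajectory of $T_1$. Applying the uniform LpSP of $T_1$ together with Proposition \ref{prop_estimate_LPSP} (with $\mathcal{U}=\mathcal{N}(\mathcal{A})$, $X=\mathbb{R}^M$) gives $\tn{dist}_H(\mathcal{A}_2,\mathcal{A}_1)\leqslant L\|T_1-T_2\|_{L^\infty(\mathcal{N}(\mathcal{A}),\mathbb{R}^M)}$, and exchanging the roles of $T_1$ and $T_2$ yields the same bound for $\tn{dist}_H(\mathcal{A}_1,\mathcal{A}_2)$, hence for the symmetric distance as well. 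This is precisely the asserted estimate, and combined with \eqref{estimate_one_map} it delivers the rate $C\delta(\varepsilon)|\log(\delta(\varepsilon))|$ for the projected attractors used in the preceding theorem.
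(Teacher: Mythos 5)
Your proposal is correct and follows essentially the same route as the paper: establish the Lipschitz Shadowing Property for Morse--Smale maps near the attractor (the paper simply cites \cite{Pilyugun1999} and \cite{Santamaria2014} for this, as your alternative suggests) and then conclude via Proposition \ref{prop_estimate_LPSP} applied in both directions. The only difference is that you make explicit the uniformity of the constants $L,\delta_0$ over the $C^1$-neighborhood $\mathcal{N}(T)$, a point the paper leaves implicit in the cited references.
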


\begin{proof}
In \cite{Pilyugun1999} was proved that a structurally stable dynamical system on a compact manifold has the LpSP and it is known that a Morse-Smale system is structurally stable. Moreover, In \cite{Santamaria2014} was proved that a discrete Morse-Smale semigroup $T$ has the LpSP in a neighborhood $\mathcal{N}(\mathcal{A})$ of its attractor $\mathcal{A}$. Hence, the result follows by the Proposition \ref{prop_estimate_LPSP}.
\end{proof}

\begin{example}\label{Ex_thin_domain}
Consider the set $Q=\{(x,y)\in \mathbb{R}\times\mathbb{R}^{N-1}: 0\leqslant x \leqslant 1,\,\,|y|<1\}$.  The thin domain is defined by $Q_\varepsilon=\{(x,\varepsilon y)\in \mathbb{R}\times\mathbb{R}^{N-1}: (x,y)\in Q\}$, $\varepsilon\in (0,1]$. This domain is obtained by shrinking the set $Q$ by a factor $\varepsilon$ in the $N-1$ direction given by the variable $y$. Note that $Q_\varepsilon$ collapses to a straight segment $(0,1)$ as $\varepsilon$ goes to zero, see Figure \ref{thin_domain_draw}. 
\begin{figure}[!htp]
\begin{center}
\begin{tikzpicture}[line cap=round,line join=round]
\tkzDefPoint(3,0.8){A}
\tkzDefPoint(3,-0.8){B}
\tkzDefPoint(3,0){0}
\tkzDefPoint(5.7,0.5){P}
\tkzDefPoint(6,0){x}
\tkzDefPoint(9,0){1}
\tkzDefPoint(9,0.8){E}
\tkzDefPoint(9,-0.8){F}
\draw[name path=elip] (1) ellipse (0.4cm and 0.8cm);
\draw[draw=none,name path=0C] (0)--(1);
\draw (3,-0.8) arc (270:90:0.4cm and 0.8cm);
\draw[dotted] (3,0.8) arc (90:-90:0.4cm and 0.8cm);
\draw (6,-0.8) arc (270:90:0.4cm and 0.8cm);
\draw[dotted] (6,0.8) arc (90:-90:0.4cm and 0.8cm);
\tkzDrawSegments[->](x,P)
\tkzDrawPoints[fill=black](0,x,1)
\tkzDrawSegments(0,1)
\tkzDrawSegments(A,E)
\tkzDrawSegments(B,F)
\draw (6.1,0.4) node{$\varepsilon y$};
\draw (3,-0.3) node{$0$};
\draw (6,-0.2) node{$x$};
\draw (9,-0.3) node{$1$};
\draw (2.4,0.9) node{$Q_\varepsilon$};
\end{tikzpicture}
\caption{Thin Domain}\label{thin_domain_draw}
\end{center}
\end{figure}

We consider the boundary value problem associated with a reaction-diffusion equation
\begin{equation}\label{thin_domain}
\begin{cases}
\partial_tu-\Delta u+\alpha u = f(u)&\tn{ in } Q_\varepsilon\times[0,\infty)\\ 
\dfrac{\partial u}{\partial \nu_\varepsilon}=0&\tn{ on }\partial Q_\varepsilon\times(0,\infty),
\end{cases}
\end{equation}
where $\alpha>0$, $\nu_\varepsilon$ the unit outward normal to $\partial Q_\varepsilon$ and $f:\mathbb{R}\to\mathbb{R}$ a $C^2$-function satisfying
\begin{itemize}
\item[(i)] $|f'(s)|\leqslant C(1+|s|^{\rho-1})$, $s\in\mathbb{R}$, for some $\rho >1$,
\item[(ii)] $f(s)s\leqslant 0$, $|s|\geqslant M$, for some $M>0$.
\end{itemize}
The limit problem of \eqref{thin_domain} as $\varepsilon \to 0$ is given by
\begin{equation}\label{thin_domain_limit}
\begin{cases}
\partial_tu-\partial_x^2u+\alpha u=f(u)\,\,\tn{in }(0,1),\\
\partial_xu(0)=\partial_xu(1)=0.
\end{cases}
\end{equation} 
The equation \eqref{thin_domain} is well posed in $H^1(Q_\varepsilon)$ and the equation \eqref{thin_domain_limit} is well posed in $H^1(0,1)$, i.e., if we define the operators $A_\varepsilon=-\Delta u+\alpha u$ and $A_0u=-\partial_x^2u+\alpha u$, the global solutions of the Cauchy problem of type \eqref{semilinear_problem} with $f^\varepsilon_\Omega=f$ and $g^\varepsilon_\Gamma\equiv0$ generates a nonlinear semigroup $T_\varepsilon(\cdot)$,  and this semigroups has global attractor $\mathcal{A}_\varepsilon$, for all $\varepsilon\in [0,1]$. Moreover, $T_0(\cdot)$ is a Morse-Smale semigroup. 

In order to understand the attractor $\mathcal{A}_0$ in $H^1(Q_\varepsilon)$, we define $E_\varepsilon: H^1(0,1)\to H^1(Q_\varepsilon)$ by $(E_\varepsilon u)(x,y)=u(x)$, for all $u\in H^1(0,1)$. Assuming that the equilibrium points of \eqref{thin_domain_limit} are hyperbolic, was proved in \cite{Santamaria2017}  the following results
\begin{itemize}
\item[(i)] $\|A_\varepsilon^{-1}-E_\varepsilon A_0^{-1}\|_{\mathcal{L}(L^2(0,1),H^1(0,1))}\leqslant C\varepsilon$;
\item[(ii)] $\|T_\varepsilon(1)-E_\varepsilon T_0(1)\|_{\mathcal{L}(H^1(0,1))}\leqslant C \varepsilon|\log(\varepsilon)|$;
\item[(iii)] There is a finite-dimensional invariant manifold $\mathcal{M}_\varepsilon$ given by graph of Lipschitz functions $s_\varepsilon$ such that $\mathcal{A}_\varepsilon\subset \mathcal{M}_\varepsilon$ and the flow can be reduced to finite dimension, i.e., we can consider an ODE generating a semigroup $\overline{T}_\varepsilon(\cdot)$ in $\mathbb{R}^M$ with an attractor $\overline{\mathcal{A}} _\varepsilon$, where $M=\tn{dim}(\mathcal{M}_\varepsilon)$. Moreover, we have the following estimates    
\begin{itemize}
\item[(a)]$\|\overline{T}_\varepsilon(1)-\overline{T}_0(1)\|_{\mathcal{L}(\mathbb{R}^M)}\leqslant C\varepsilon|\log(\varepsilon)|;$
\item[(b)]$\sup_{v\in\mathbb{R}^M} \|s_\varepsilon(v)-s_0(v)\|_{H^1(0,1)}\leqslant C\varepsilon|\log(\varepsilon)|.$
\end{itemize}
\end{itemize}
Therefore the convergence of attractors of \ref{thin_domain} and \ref{thin_domain_limit} can be estimated by 
$$\tn{d}_H(\mathcal{A}_\varepsilon, E_\varepsilon \mathcal{A}_0)\leqslant C \varepsilon^\frac{N+1}{2}|\log(\varepsilon)|,$$
where $C>0$ is a constant independent of $\varepsilon.$
\end{example}

\section*{Data availability statement}

 The data that support the findings of this study are available within the article.

\section*{Conflict of interest}

The authors declare that they have no conflict of interest.

\section*{Funding}

\noindent F. D. M. Bezerra thank the CNPq for financial support through the project \# 303039/2021-3, Brazil;

\noindent  M. C. Pereira thank the CNPq for financial support through the project \# 308950/2020-8 and FAPESP for financial support through the project  \# 2020/14075-6, Brazil.

\bibliographystyle{abbrv}
\bibliography{References}
\end{document}